\tikzstyle{edge}=[very thick]
\definecolor{bostonuniversityred}{rgb}{0.8, 0.0, 0.0}
\definecolor{arsenic}{rgb}{0.23, 0.27, 0.29}
\tikzstyle{diredge}=[postaction={decorate,decoration={markings,
\tikzset{
    arrow/.style={decoration={markings, mark=at position 0.7 with
    {\fill(-0.09*#1,-0.03*#1) -- (0,0) -- (-0.09*#1,0.03*#1) -- cycle;}}, postaction={decorate}},
    arrow/.default=1
}
\tikzset{
    arow/.style={decoration={markings, mark=at position 1 with
    {\fill(-0.09*#1,-0.03*#1) -- (0,0) -- (-0.09*#1,0.03*#1) -- cycle;}}, postaction={decorate}},
    arow/.default=1
}
\tikzset{
    arrrow/.style={decoration={markings, mark=at position 0.9 with
    {\fill(-0.09*#1,-0.03*#1) -- (0,0) -- (-0.09*#1,0.03*#1) -- cycle;}}, postaction={decorate}},
    arow/.default=1
}
\newcommand{\fitellipsis}[2] 
{\draw [fill=white]let \p1=(#1), \p2=(#2), \n1={atan2(\y2-\y1,\x2-\x1)}, \n2={veclen(\y2-\y1,\x2-\x1)}
    in ($ (\p1)!0.5!(\p2) $) ellipse [ x radius=\n2/2+0cm, y radius=1.1cm, rotate=\n1];
}
\newcommand{\Fitellipsis}[2] 
{\draw [fill=white]let \p1=(#1), \p2=(#2), \n1={atan2(\y2-\y1,\x2-\x1)}, \n2={veclen(\y2-\y1,\x2-\x1)}
    in ($ (\p1)!0.5!(\p2) $) ellipse [ x radius=\n2/2+0cm, y radius=1.4cm, rotate=\n1];
}
\theoremstyle{plain}
\newtheorem*{thm*}{Theorem}
\newtheorem{thm}{Theorem}[section]
\Crefname{thm}{Theorem}{Theorems}
\newtheorem*{lem*}{Lemma}
\newtheorem{lem}[thm]{Lemma}
\Crefname{lem}{Lemma}{Lemmas}
\newtheorem*{claim*}{Claim}
\newtheorem{claim}[thm]{Claim}
\crefname{claim}{Claim}{Claims}
\Crefname{claim}{Claim}{Claims}
\newtheorem{prop}[thm]{Proposition}
\Crefname{prop}{Proposition}{Propositions}
\newtheorem{remar}[thm]{Remark}
\Crefname{remar}{Remark}{Remarks}
\crefname{cor}{Corollary}{Corollaries}
\crefname{conj}{Conjecture}{Conjectures}
\Crefname{qn}{Question}{Questions}
\Crefname{obs}{Observation}{Observations}
\Crefname{ex}{Example}{Examples}
\theoremstyle{definition}
\Crefname{prob}{Problem}{Problems}
\newtheorem{defn}[thm]{Definition}
\Crefname{defn}{Definition}{Definitions}
\theoremstyle{remark}
\renewenvironment{proof}[1][]{\begin{trivlist}
\item[\hspace{\labelsep}{\bf\noindent Proof#1.\/}] }{\qed\end{trivlist}}
\newcommand{\remove}[1]{}
\newcommand{\eps}{\varepsilon}
\title{\vspace{-1 cm}
Tight bounds for powers of Hamilton cycles in tournaments}
\date{}
\author{
Nemanja Dragani\'{c} \thanks{
Department of Mathematics, ETH, Z\"urich, Switzerland. Research supported in part by SNSF grant 200021\_196965.
\newline
\emph{E-mails}: \textbf{\{nemanja.draganic,david.munhacanascorreia,benjamin.sudakov\}@math.ethz.ch}.
}
\and
David Munh\'a Correia\footnotemark[1]
\and
Benny Sudakov\footnotemark[1]}
\begin{document} 
\maketitle
\begin{abstract}
A basic result in graph theory says that any $n$-vertex tournament with in- and out-degrees larger than $\frac{n-2}{4}$ contains a Hamilton cycle, and this is tight. In 1990, Bollob\'{a}s and H\"{a}ggkvist significantly extended this by showing that for any fixed $k$ and $\eps > 0$, and sufficiently large $n$, all tournaments with degrees at least $\frac{n}{4}+\eps n$ contain the $k$-th power of a Hamilton cycle. Up until now, there has not been any progress on determining a more accurate error term in the degree condition, neither in understanding how large $n$ should be in the Bollob\'{a}s-H\"{a}ggkvist theorem. We essentially resolve both of these questions. First, we show that if the degrees are at least $\frac{n}{4} + cn^{1-1/\lceil k/2 \rceil}$ for some constant $c = c(k)$, then the tournament contains the $k$-th power of a Hamilton cycle. In particular, in order to guarantee the square of a Hamilton cycle, one only needs a constant additive term. We also present a construction which, modulo a well-known conjecture on Tur\'an numbers for complete bipartite graphs, shows that the error term must be of order at least $n^{1-1/\lceil (k-1)/2 \rceil}$, which matches our upper bound for all even $k$. For odd $k$, we believe that the lower bound can be improved. Indeed, we show that for $k=3$, there are tournaments with degrees $\frac{n}{4}+\Omega(n^{1/5})$ and no cube of a Hamilton cycle. In addition, our results imply that the Bollob\'{a}s-H\"{a}ggkvist theorem already holds for $n = \eps^{-\Theta(k)}$, which is best possible.
\end{abstract}

\section{Introduction}\label{main}
Hamiltonicity is one of the most central notions in graph theory, and it has been extensively studied by numerous researchers. The problem of deciding Hamiltonicity of a graph is NP-complete, but there are many important results which derive sufficient conditions for this property. One of them is the classical Dirac's theorem \cite{dirac1952some}, which states that every graph with minimum degree at least $\frac{n}{2}$ contains a Hamilton cycle, and that this is tight. Another natural and more general property is to contain the $k$-th power of a Hamilton cycle. Extending Dirac's theorem, Seymour \cite{seymour1974problem} conjectured in 1974
that the minimum degree condition for a graph to contain the $k$-th power of a Hamilton cycle is $\frac{kn}{k+1}$. After two decades and several papers on this question, 
Koml\'{o}s, S\'{a}rk\"{o}zy and Szmer\'{e}di \cite{komlos1998proof} confirmed Seymour's conjecture.

Clearly, one can ask similar questions for directed graphs (see \cite{kuhn2012survey}), which tend to be more difficult. In 1979, Thomassen \cite{thomassen1979long} asked the question of determining the \emph{minimum semidegree} $\delta^0(G)$ (that is, the minimum of all in- and out-degrees) which implies the existence of a Hamilton cycle in an oriented graph $G$. This was only answered thirty years later by Keevash, K\"{u}hn and Osthus \cite{keevash2009exact}, who showed that $\delta^0(G) \geq \frac{3n-4}{8}$ forces a Hamilton cycle, which is tight by a construction of H\"{a}ggkvist \cite{haggkvist1993hamilton}. Already the problem for squares of Hamilton cycles is not well understood. Treglown \cite{treglown2012note} showed that $\delta^0(G) \geq \frac{5n}{12}$ is necessary, which was subsequently improved by DeBiasio (personal communication).  He showed that $\delta^0(G) \geq \frac{3n}{7}-1$ is needed, using a slightly unbalanced blowup of the Paley tournament on seven vertices. It would be interesting to determine, even asymptotically,  the optimal value of $\delta^0(G)$ which implies the existence of the square of Hamilton cycle. For clarity, by the $k$-th power of the directed path $P_l = v_0 \ldots v_l$ we mean the directed graph $P^k_l$ on the same vertex set with an edge $v_iv_j$ if and only if $i < j \leq i+k$. The $k$-th power of a directed cycle is similarly defined. 

Due to the difficulty of these problems in general, it is natural to ask what happens in tournaments. It is a well known result that every tournament with minimum semidegree $\frac{n-2}{4}$ has a Hamilton cycle and that this is best possible. By how much do we need to increase the degrees in order to guarantee a $k$-th power? The remarkable result by Bollob\'{a}s and H\"{a}ggkvist \cite{bollobas1990powers} given below
says that a little bit is already enough. 
\begin{thm}[\cite{bollobas1990powers}]\label{bollobas}
For every $\eps > 0$ and $k$, there exists a $n_0 = n_0(\eps,k)$ such that every tournament $T$ on $n \geq n_0$ vertices with $\delta^0(T) \geq \frac{n}{4} + \eps n$ contains the $k$-th power of a Hamilton cycle.
\end{thm}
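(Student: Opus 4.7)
The approach is to combine Szemer\'edi's regularity lemma for digraphs with the blow-up lemma and an absorption argument. Choose parameters $1/n \ll \eps' \ll d \ll \eps,\, 1/k$. Apply the regularity lemma to $T$ to obtain an equitable partition $V_0, V_1, \dots, V_m$ with $|V_0| \le \eps' n$ and all but $\eps' m^2$ of the ordered pairs $(V_i, V_j)$ being $\eps'$-regular. Let $R$ be the reduced digraph on $[m]$ with edge $i \to j$ whenever the density from $V_i$ to $V_j$ is at least $d$. A standard counting argument yields $\delta^0(R) \ge (1/4+\eps/2)m$; keeping only one direction from each pair $\{i,j\}$ with both $i\to j,\, j\to i \in R$ then turns $R$ into a tournament of semidegree exceeding $m/4$.

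The core step is to locate in $R$ a template whose blow-up is the $k$-th power of a Hamilton cycle of $T$. The natural template is the $k$-th power of a Hamilton cycle in $R$ itself. This is obtained by strong induction on $n$: since $m$ depends only on $\eps'$, shrinking $\eps'$ makes $m$ a large constant, at least $n_0(\eps/2, k)$ while still $m < n$, so the induction hypothesis applies to $R$ with parameter $\eps/2$. Given such a template, I would subdivide each $V_i$ into $k+1$ equal sub-clusters (inheriting $\eps'$-regularity via the slicing lemma) and invoke the directed blow-up lemma: this embeds the $k$-th power of a directed Hamilton path into the refined cluster structure, using all but at most $\alpha n$ vertices. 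The exceptional set $V_0$ together with the $\alpha n$-leftover is handled by a short absorbing $k$-th power of a path $P_A$ built beforehand, whose existence follows because $\delta^0(T) \ge n/4+\eps n$ provides polynomially many local absorbers for every vertex, and a random-greedy selection then assembles $P_A$. Finally, connect $P_A$ into the main power via short connecting sub-paths, using the semidegree condition once more.

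The main obstacle is the recursive step that finds the $k$-th power of a Hamilton cycle in $R$: the induction must be set up so that the base case (tournaments with $n$ below some fixed constant) is trivial, and the reduction from $n$ to $m$ always produces $m \ge n_0(\eps/2, k)$. This is compatible because the regularity lemma allows $m$ to be chosen as large as desired (by shrinking $\eps'$), while always remaining a function of $\eps'$ alone and hence much smaller than $n$ once $n$ is large. A secondary technical point is that the blow-up lemma must handle the $k$-th power as a bounded-degree spanning subgraph of the template, which requires both the sub-cluster refinement and a verification that the maximum degree $2k$ of the $k$-th power is compatible with the chosen regularity and density parameters; these checks are standard but must be carried out carefully in the directed setting.
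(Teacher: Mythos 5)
Your plan founders on the recursive step, which as written is circular. To find the template in the reduced tournament $R$ you invoke the very theorem being proved, with parameter $\eps/2$ on $m$ vertices; this requires $m \geq n_0(\eps/2,k)$, so $n_0(\eps,k)$ is being defined in terms of $n_0(\eps/2,k)$, which in turn needs $n_0(\eps/4,k)$, and so on without end. Framing it as strong induction on $n$ does not help: for a \emph{fixed} $\eps$ the assertion ``every tournament on $n$ vertices with $\delta^0\geq(\frac14+\eps)n$ contains the $k$-th power of a Hamilton cycle'' is simply false for small $n$ (a random tournament on fewer than $2^{(k-1)/2}$ vertices has no transitive subtournament on $k$ vertices), so there is no base case in $n$; and since each application halves $\eps$, there is no base case in $\eps$ either unless you supply an independent argument for some regime (say $\eps$ bounded below by an absolute constant, i.e.\ fairly dense-degree tournaments) where the recursion can stop. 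That regime is itself nontrivial — it is essentially the content of the cut-dense case, Theorem~\ref{thm:cut result} — and nothing in your outline provides it. A second, independent gap is the claim that discarding one direction of each doubly-oriented pair turns the reduced digraph into a tournament with semidegree exceeding $m/4$. In the reduced digraph of a tournament, pairs carrying both orientations (both densities in $[d,1-d]$) are ubiquitous, and an arbitrary tie-breaking can wipe out a cluster's out-degree entirely; the natural alternative, majority orientation, does not inherit the degree condition at all (a cluster may send density $0.49$ to every other cluster, so its vertices comfortably satisfy $\delta^0(T)\geq \frac n4+\eps n$ while its out-degree in the majority reduced tournament is $0$). Whether \emph{some} choice of orientations preserves semidegree above $m/4$ is a genuine combinatorial question that you neither pose nor answer — and it is exactly where the tournament-specific nature of the $n/4$ threshold lives, so it cannot be waved through as routine. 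A further, smaller gap: the absorbing path for $k$-th powers at semidegree $\frac n4+\eps n$ is asserted via ``polynomially many local absorbers,'' but constructing such absorbers at this degree threshold is itself the hard part and is not sketched.

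For contrast, the paper does not reprove Theorem~\ref{bollobas} by regularity at all: it deduces it from Theorem~\ref{thm:cut result} (a tournament with $\delta^0\geq\frac n4+\eps n$ is $2\eps$-cut-dense), which is proved by elementary means — partitioning into chains using powers of paths in tournaments, plus a linking lemma based on dependent random choice — and this yields $n_0=\eps^{-O(k)}$, whereas any regularity-based bootstrapping of the kind you propose would at best give tower-type bounds, much like the original Bollob\'as--H\"aggkvist argument.
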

\noindent This theorem suggests two very natural avenues of research. The first one is to determine the order of magnitude of the additive error term in the degree condition. Indeed, the proof of the above theorem does not give any additional information, apart from showing that it is $o(n)$. It is a common pattern in extremal combinatorics that once asymptotic bounds are obtained,  they are gradually refined using new methods and inventive ideas, until the right behaviour of the error term is shown. A celebrated example of this line of research is the problem of determining minimal degree conditions in $k$-uniform hypergraphs which force a perfect matching, which has been extensively studied. After a series of improvements of the additive error term in the necessary degree \cite{kuhn2006matchings,aharoni2009perfect, rodl2008approximate,rodl2006dirac}, the precise behaviour was proved in 2009 by R\"odl, Ruci\'nski and Szemer\'edi \cite{rodl2009perfect} using the powerful absorption method. Also, in the case of Thomassen's question mentioned earlier, concerning degree conditions forcing hamiltonicity of oriented graphs, it was first shown in \cite{kelly2008dirac} that a minimum semidegree $\frac{3n}{8} + o(n)$ was sufficient - in a subsequent paper, Keevash, K\"uhn and Osthus \cite{keevash2009exact} confirmed the conjecture that $\frac{3n-4}{8}$ is enough. A similar situation occurred with Seymour's conjecture (see \cite{komlos1998proof} and the references therein). Finally, the classical  Erd\H{o}s-Stone-Simonovits theorem about extremal numbers $\text{ex}(n,H)$ of graphs \cite{erdos1946structure} was also refined over the years, with some results showing that the size of the forbidden graph $H$ can be logarithmic in $n$ \cite{chvatal1983notes, bollobas1994extension, bollobas1976structure, ishigami2002proof} and others obtaining a better error term for the number of edges in the extremal graphs \cite{simonovits1968method}.

The second natural question that arises from Theorem~\ref{bollobas} is to determine, for fixed $\eps$, how large $n_0$ should be as a function of $k$. The proof of Bollob\'as and H\"aggkvist needs $n_0$ to be very large, more precisely, to grow faster than $t(\lceil \log_2 (1/\eps) \rceil + 2)$, where $t$ is a tower-type function defined by letting $t(0) = 2^k$ and $t(i+1) = \frac{1}{2}(2\eps)^{-t(i)}$. Despite that, one might hope for only an exponential upper bound for $n_0$, which would be best possible since a random tournament on at most $2^{(k-1)/2}$ vertices does not even contain a transitive tournament of size $k$, let alone the $k$-th power of a Hamilton cycle.

In this paper we address both these questions, resolving the second one and obtaining nearly tight bounds for the first one. 
We start with the additive error in the degree condition.
\begin{restatable}{thm}{MainTheorem}\label{maintheorem}
There exists a constant $c = c(k) > 0$ such that any tournament $T$ on $n$ vertices with $\delta^{0}(T) \geq \frac{n}{4} + cn^{1- 1/ \lceil k/2 \rceil}$ contains the $k$-th power of a Hamilton cycle.
\end{restatable}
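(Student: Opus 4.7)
The plan is to prove \Cref{maintheorem} via the absorption method. Write $r := \lceil k/2 \rceil$, so the hypothesis reads $\delta^{0}(T) \geq n/4 + c n^{1-1/r}$. The proof combines three ingredients: a connecting lemma, an absorbing $k$-th-power path, and an almost-spanning $k$-th-power path. Once all three are in place, the $k$-th power of a Hamilton cycle is assembled in the standard way.

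The heart of the argument is a \textbf{connecting lemma}: any two compatible $k$-tuples of vertices (the tail of one $k$-th-power path and the head of another) can be joined by a $k$-th-power path of constant length, disjoint from any prescribed linear-sized set. I would grow the connecting path one vertex at a time, at each step picking the new vertex from the common out-neighbourhood of the preceding $k$ vertices. The only obstruction is a tuple whose common out-neighbourhood inside the allowed pool is too small. To control these, I introduce an auxiliary bipartite graph $H$ on $V(T)\times V(T)$, putting an edge between $(u,v)$ precisely when $|N^{+}(u)\cap N^{+}(v)|$ is below an appropriate threshold. A convexity/Cauchy--Schwarz computation that uses $\delta^{0}(T) \geq n/4 + c n^{1-1/r}$ shows that $H$ must be $K_{r,r}$-free, so by Kővári--Sós--Turán one has $|E(H)| = O(n^{2-1/r})$; after choosing $c$ large enough in terms of the hidden constant, this is negligible compared with the slack available at each extension step, and the argument iterates to a bounded-length connecting path.

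With the connecting lemma in place, the remaining steps follow the template of \cite{rodl2009perfect}. For every vertex $v$ one finds linearly many short \emph{local absorbers} --- $k$-th-power paths whose internal structure admits a rearrangement that incorporates $v$. A random subfamily of these, glued together by the connecting lemma, yields an \emph{absorbing path} $P_A$ that can absorb any sufficiently small leftover set while preserving its end-tuples. Outside $V(P_A)$, a greedy extension argument (again using the connecting lemma to bypass the bad configurations detected by $H$) produces a $k$-th-power of a path $P_C$ covering all but a very small set $L$. Two further applications of the connecting lemma join $P_C$ to $P_A$ and close the union into a $k$-th-power of a cycle, after which $P_A$ absorbs $L$ to complete the $k$-th-power of a Hamilton cycle.

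The principal difficulty is the connecting lemma: since the degree condition is essentially tight for Hamiltonicity, common out-neighbourhoods of $k$ consecutive vertices can a priori be empty, so ensuring that extensions never get stuck is delicate. The Kővári--Sós--Turán step is exactly what guarantees that ``bad'' pairs are too few to block the extension, and its Turán exponent $1-1/r$ is what pins down the threshold $1 - 1/\lceil k/2 \rceil$ in the additive term. The remaining machinery --- local absorbers, random sampling, greedy covering --- is by now routine once the connecting lemma is in hand.
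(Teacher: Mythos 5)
There is a genuine gap, and it sits exactly at the step you call the heart of the argument. Your connecting lemma rests on the claim that the auxiliary graph $H$ of ``bad pairs'' $(u,v)$ with small common out-neighbourhood is $K_{r,r}$-free, so that K\H{o}v\'ari--S\'os--Tur\'an makes bad pairs negligible. This claim is not justified by a convexity computation, and more importantly it targets the wrong obstruction. The extremal structure for this problem (see the construction proving Theorem~\ref{generallowerbound}) is a tournament split into two halves $A,B$, nearly regular inside each half, with only $o(n^2)$ edges oriented from $A$ to $B$. In such a tournament \emph{every} pair of vertices has common out-neighbourhood of size $\Omega(n)$, so your graph $H$ is empty and your criterion detects nothing; yet a $k$-th power of a Hamilton cycle may fail to exist, because any such cycle must at some point place $\lceil k/2\rceil$ consecutive vertices of $A$ before $\lceil k/2\rceil$ consecutive vertices of $B$, i.e.\ it needs a $K_{r,r}$ \emph{in the sparse directed cut from $A$ to $B$}, not in a pairwise-defined auxiliary graph. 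The exponent $1-1/\lceil k/2\rceil$ comes from this crossing phenomenon, which involves $r$-tuples dominating $r$-tuples across a cut whose total size is only $\Theta(n^{2-1/r})$; it cannot be captured by counting bad pairs. Moreover, your connecting lemma as stated (constant-length connections between arbitrary compatible tuples avoiding an arbitrary prescribed linear-sized set) is false under the hypothesis: the degree condition only forces the $A\to B$ edges to number about $cn^{2-1/r}$ in total, and these may be incident to a proper subset of $A$, so a forbidden set can destroy all of them, making any connection from a tuple in $A$ to a tuple in $B$ impossible. For the same reason the R\"odl--Ruci\'nski--Szemer\'edi absorption template is problematic here: it needs linearly many connections to glue the absorbers, while the sparse cut only supports a bounded (or at best sublinear) number of crossings, so one must ensure that essentially all connections stay inside one half.

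This is precisely the structural work the paper does and your proposal omits. The paper splits into two cases: if every balanced cut is dense in both directions, a linking lemma (Lemma~\ref{lem:linking}) valid for $\delta$-cut-dense tournaments, together with a chain decomposition, yields the result even slightly below semidegree $n/4$ (Theorem~\ref{thm:cut result}); if there is a sparse cut $(A,B)$, one proves a Tur\'an-type crossing theorem (Theorem~\ref{bridgeslemma}: $\overrightarrow{e}(X,Y)\geq Cn^{2-1/\lceil k/2\rceil}$ forces a $k$-th power of a path from $X^k$ to $Y^k$), and then builds a small, carefully \emph{balanced} family of bridges crossing the cut --- the same number in each direction --- covering the exceptional vertices, while all remaining connections are made inside the almost-regular halves via the cut-dense machinery. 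Your outline contains no analogue of the sparse-cut case, no mechanism for producing or budgeting crossings, and no balancing of crossing directions, so the proof does not go through as proposed.
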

\noindent
In particular, we show that a constant error term is enough for the tournament to contain the square of a Hamilton cycle. 

It appears that this theorem is nearly tight. This follows from a somewhat surprising connection between our question and the Tur\'an problem for complete bipartite graphs.
As usual, for a fixed graph $H$  we let $\text{ex} (n,H)$ denote the maximal number of edges in a $n$-vertex graph which does not contain $H$ as a subgraph. 
The next result gives a construction of tournaments with large minimum semidegree which do not contain the $k$-th power of a Hamilton cycle. 
\begin{thm}\label{generallowerbound}
Let $k \geq 2$ and $r = \lceil \frac{k-1}{2} \rceil$. For all sufficiently large $n = 3  \text{ }(\text{mod } 4)$, there exists a $n$-vertex tournament $T$ with $\delta^{0}(T) \geq \frac{n+1}{4} + \Omega \left(\frac{ex(n,K_{r,r})}{n} \right)$ which does not contain the $k$-th power of a Hamilton cycle.
\end{thm}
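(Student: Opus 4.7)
The plan is to construct $T$ from an extremal $K_{r,r}$-free bipartite graph. Write $n = 4m+3$, partition $V(T) = A \sqcup B$ with $|A| = 2m+2$ and $|B| = 2m+1$, place near-regular tournaments $T_A, T_B$ inside each of $A$ and $B$, and let $H \subseteq A \times B$ be a $K_{r,r}$-free bipartite graph of minimum degree $d = \Omega(ex(n, K_{r,r})/n)$ (obtained from a near-regular extremal $K_{r,r}$-free graph, augmented if needed by a near-perfect matching to guarantee each vertex has at least one $H$-neighbor). The tournament $T$ is then obtained by orienting each cross-pair $(a,b) \in A \times B$ as $a \to b$ by default, reversing the direction to $b \to a$ precisely when $\{a,b\} \in H$. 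A direct degree calculation gives $\delta^0(T) \geq m + d = \frac{n+1}{4} + \Omega(ex(n, K_{r,r})/n)$.

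Suppose toward contradiction that $C = v_0 v_1 \cdots v_{n-1}$ is a $k$-th power of Hamilton cycle in $T$, and write $c(v_i) \in \{A, B\}$ for the class of $v_i$. The $k$-th power condition $v_i \to v_{i+j}$ for $1 \leq j \leq k$ translates into two constraints on the cycle-close pairs: (i) if $c(v_i) = B$ and $c(v_{i+j}) = A$, then $\{v_i, v_{i+j}\} \in H$, and (ii) if $c(v_i) = A$ and $c(v_{i+j}) = B$, then $\{v_i, v_{i+j}\} \notin H$. Using (i), I aim to locate a window of $k+1$ consecutive positions on $C$ containing $r$ $B$-positions strictly preceding $r$ $A$-positions (as a subsequence); such a window yields $r$ $B$-vertices each within cycle-distance $k$ of $r$ $A$-vertices, producing a $K_{r,r}$ inside $H$ via (i) and contradicting its $K_{r,r}$-freeness. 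The parity $n \equiv 3 \pmod 4$ forces $|A| \ne |B|$, which rules out a strictly alternating color sequence and creates at least one pair of adjacent $A$-positions on $C$; for small $r$ an elementary case analysis around such an irregularity locates the required window.

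The main obstacle is precisely this combinatorial lemma producing the $B^r A^r$-subsequence window for general $r$. When $r \geq 3$ a single pair of adjacent $A$'s may not suffice, since the adversary can spread the excess $A$'s throughout the cycle. The likely resolution combines several irregularities simultaneously, exploits constraint (ii) as an additional structural condition (every cycle-close $(A,B)$-pair must avoid $H$), and/or calibrates the inner tournaments $T_A, T_B$ to block alternating-type color sequences --- noting that a strictly alternating sequence would require $T_A$ and $T_B$ to contain $\lfloor k/2 \rfloor$-powers of Hamilton cycles, which in turn can be obstructed by an appropriate inductive choice of these inner tournaments. Reconciling all these choices with the required semidegree bound is where the delicate technical work of the theorem lies.
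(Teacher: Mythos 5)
Your construction is close in spirit to the paper's, but the step you yourself flag as the ``main obstacle'' is a genuine gap, and it cannot be repaired along the lines you sketch. The window lemma you need --- that every $k$-th power of a Hamilton cycle contains $k+1$ consecutive positions with $r$ $B$-positions preceding $r$ $A$-positions --- is not a consequence of your construction. A nearly alternating colour pattern defeats it: in an alternating window of length $k+1$ the $r$-th $B$ is followed by only about $\lceil (k+1)/2\rceil-r+1$ $A$'s, which is $1$ for even $k$ with $r=k/2$, so no $K_{r,r}$ ever forms; and your parity choice $|A|=2m+2>|B|=2m+1$ puts the single forced irregularity on the \emph{wrong} side (an extra $A$-block makes $B$-accumulation harder, not easier). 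You would therefore have to rule out near-alternating realizations of the power, and your construction gives no leverage for this: constraint (ii) is easy to satisfy because $H$ is sparse, and constraint (i) only asks each $B$-vertex to send $H$-edges to its next $O(k)$ $A$-vertices, which a vertex of $H$-degree $d=\Omega(\mathrm{ex}(n,K_{r,r})/n)\gg k$ could well do. Worse, your fallback --- choosing $T_A,T_B$ so that they lack $\lfloor k/2\rfloor$-th powers of Hamilton cycles --- is impossible under the semidegree requirement: since the cross-degrees in one direction are only $d=o(n)$, the inner tournaments must be nearly regular, hence $\Omega(1)$-cut-dense, and by Theorem \ref{thm:cut result} (or already by Bollob\'as--H\"aggkvist) they \emph{do} contain all constant powers of Hamilton cycles once $n$ is large.

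The paper avoids this entire difficulty with a one-vertex gadget. It takes a balanced bipartition $A,B$ of $n-1$ vertices, makes $T[A],T[B]$ regular, orients the near-regular $K_{r,r}$-free graph $G$ from $A$ to $B$ and all remaining cross pairs from $B$ to $A$ (note the orientation of the sparse graph is opposite to yours), and then adds a single vertex $v$ with $N^-(v)=A$ and $N^+(v)=B$. In any $k$-th power of a Hamilton cycle, the $k$ vertices immediately preceding $v$ must dominate $v$ and hence lie in $A$, and the $k$ vertices immediately following $v$ lie in $B$; the last $r$ of the former and the first $r$ of the latter are pairwise within cyclic distance $k$ (this is where $r=\lceil (k-1)/2\rceil$ enters), so all $r^2$ edges between them go from $A$ to $B$ and form a $K_{r,r}$ in $G$, a contradiction. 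In other words, the special vertex manufactures the long monochromatic blocks $A^k v B^k$ that your parity argument cannot produce, and this is precisely the idea missing from your proposal. (The degree bookkeeping, including the maximum-degree control on $G$ needed so that no out-degree drops, is handled by the paper's Claim \ref{proposition}; your sketch of that part is essentially fine.)
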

Modulo a well-known conjecture on Tur\'an numbers for complete bipartite graphs, this result implies that in addition to $n/4$, the semidegree bound must have an additive term of order at least $n^{1- 1/ \lceil (k-1)/2 \rceil}$, which matches the bound in Theorem \ref{maintheorem} for all even $k$. Indeed, the celebrated result of K\"{o}v\'{a}ri, S\'{o}s and Tur\'{a}n \cite{kHovari1954problem}, says that $\text{ex} (n,K_{r,r}) = O(n^{2- 1/r})$ and this estimate is widely believed to be tight.
Moreover, for unbalanced complete bipartite graphs, it was proven by Alon, Koll\'ar, R\'onyai and Szab\'o (\cite{alon1999norm},\cite{kollar1996norm}) that $\text{ex} (n,K_{r,s}) = \Omega(n^{2- 1/r})$ when $s > (r-1)!$.
It is also known that $\text{ex} (n,K_{r,r}) = \Omega(n^{2- 1/r})$ for $r = 2,3$ (\cite{Erdos1966}, \cite{brown1966graphs}), which corresponds to $k=4,6$ in our problem. 

For odd values of $k$ there is still a small gap between the results in Theorems \ref{maintheorem} and \ref{generallowerbound}, which would be interesting to bridge. We make a step in this direction, showing that for $k=3$, the constant error term in Theorem \ref{generallowerbound} can be improved to a power of $n$. 
\begin{restatable}{thm}{cubes}\label{cubesthm}
For infinitely many values of $n$, there exists a tournament on $n$ vertices with minimum semidegree $\frac{n}{4} + \Omega(n^{1/5})$ and no cube of a Hamilton cycle. 
\end{restatable}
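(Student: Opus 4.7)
The plan is to refine the general construction behind Theorem \ref{generallowerbound}, which for $k=3$ (and hence $r=1$) gives only a constant additive gain. The target exponent $1/5$ matches the minimum degree attainable in a dense bipartite graph of girth at least $10$, suggesting that the correct extremal input for cubes is such a high-girth graph rather than the trivial $K_{1,1}$-free condition used in Theorem \ref{generallowerbound}.

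\textbf{Construction.} Fix $n \equiv 3 \pmod 4$ large and partition $V(T) = A \sqcup B$ into nearly equal halves. Inside each part, place a regular tournament, so that each of $T_A$ and $T_B$ has semidegree $\approx n/4$. Between the parts, orient every cross-edge from $A$ to $B$ except for those in a bipartite graph $F \subseteq A \times B$, whose edges are flipped to point from $B$ to $A$. Take $F$ to have minimum degree $\Omega(n^{1/5})$ and girth at least $10$, and additionally to be non-Hamiltonian as a bipartite graph; this can be realized by taking a disjoint union of several copies of the incidence graph of a generalized quadrangle (which has $\Theta(n^{6/5})$ edges and minimum degree $\Theta(n^{1/5})$). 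A direct computation shows that flipping $F$ raises the in-degree of each $a \in A$ by $|N_F(a)|$ and the out-degree of each $b \in B$ by $|N_F(b)|$, so $\delta^{0}(T) \geq \lfloor (n-2)/4 \rfloor + \delta(F) \geq n/4 + \Omega(n^{1/5})$.

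\textbf{Absence of a Hamilton cube.} Suppose for contradiction that $C = v_1 \ldots v_n$ is a Hamilton cycle with $C^3 \subseteq T$, and let $t_i \in \{A, B\}$ record the side of $v_i$. Since $|A| = |B|$, the cyclic type sequence $t_1 \ldots t_n$ is balanced. We argue by its run-length profile. In the purely alternating pattern $BABA\ldots$, the $1$- and $3$-step cube edges from $B$ to $A$ force $F$ to contain a Hamilton cycle of length $n$ (specifically, $b_j \to a_j$ and $b_j \to a_{j+1}$ for all $j$, which form a single $2$-regular bipartite cycle), contradicting the non-Hamiltonicity of $F$. When some $B$-run of length at least $2$ is adjacent to an $A$-run of length at least $2$, the six transitive-$K_4$ windows straddling the boundary force a $K_{2,2}$ in $F$, contradicting its $C_4$-freeness. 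In the remaining mixed patterns, where runs of length $\geq 2$ are isolated by singleton runs, the $F$-substructures forced by consecutive transitions combine into a closed walk in $F$ of length at most $8$, again contradicting the girth condition.

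\textbf{Main obstacle.} The most delicate step is the mixed-pattern case: no single transition forces a short cycle in $F$, so one must combine the $F$-edges forced by several transitions and track how their cyclic interaction produces a short closed walk. The choice of girth exactly $10$ (rather than $6$ or $8$) is dictated by the worst such pattern, and it is precisely this threshold that yields the $n^{1/5}$ rate. A secondary technical point is arranging for $F$ to be simultaneously dense (to maximize $\delta(F)$), of high girth, and non-Hamiltonian as a bipartite graph, which is why $F$ must be taken as a disjoint union of several components rather than a single incidence graph.
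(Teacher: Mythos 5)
Your construction replaces the paper's carefully designed orientation inside the parts by \emph{arbitrary} regular tournaments, and this is where the argument breaks: the third case of your pattern analysis is false. Consider the balanced cyclic type sequence obtained by repeating the block $B\,B\,A\,B\,A\,A$ (in your convention, where the sparse direction $F$ goes from $B$ to $A$). This pattern is not alternating, and no $B$-run of length $\geq 2$ is immediately followed by an $A$-run of length $\geq 2$, so it falls into your ``mixed'' case. Writing one block as $b_1 b_2 a_1 b_3 a_2 a_3$, the only cube edges that go from $B$ to $A$ within distance $3$ are $b_1a_1,\, b_2a_1,\, b_2a_2,\, b_3a_2,\, b_3a_3$, i.e.\ a path on six vertices, and the paths forced by distinct blocks are vertex-disjoint (the block boundaries $a_2a_3b_4b_5$ only create $A\to B$ edges, which lie in the dense direction). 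So the forced subgraph of $F$ is a disjoint union of $5$-edge paths: there is no closed walk of length at most $8$, no $C_4$, and no connectivity requirement, so neither the girth hypothesis nor the non-Hamiltonicity of $F$ is contradicted. All remaining edges required by such a cube lie inside $A$, inside $B$, or in the dense $A\to B$ direction, none of which your hypotheses constrain; hence your proof does not rule this pattern out (and it is far from clear that your tournament avoids a Hamilton cube at all). This is exactly the difficulty the paper's proof is built around: it does \emph{not} take an arbitrary tournament on $A$, but orients the edges of $A$ in two rounds (``type I'' edges inside each common neighborhood $N_G(b)$, and ``type II'' edges $a_3\to a_1$ closing every two-step type I path whose endpoints have no common neighbor), precisely so that the dangerous window ($AABAB$ in the paper's orientation convention, $BBABA$ in yours) forces either a forbidden transitive triple inside $A$ or a $C_4/C_6$ in the bipartite graph. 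It also adds a single special vertex $v$ with $N^+(v)=A$ and $N^-(v)=B$, which forces three consecutive $A$'s after $v$ and anchors a counting argument: either the bad window occurs, or every $B$ is followed by two $A$'s and the sequence has more $A$'s than $B$'s, contradicting $|A|=|B|$. Without such a designed internal orientation (or some substitute idea), no condition on $F$ alone -- girth, density, or non-Hamiltonicity -- can exclude the mixed patterns.

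Two secondary points. First, your extremal input is misattributed: the incidence graph of a generalized quadrangle has girth $8$ (so it contains $C_8$ and does not meet your ``girth at least $10$'' requirement) and degree $\Theta(N^{1/3})$; graphs of girth exactly $10$ with degree $N^{1/4}$ are not known (generalized pentagons do not exist), and the $n^{1/5}$ rate in the theorem comes from Benson's girth-$12$ generalized hexagon incidence graphs, which is what the paper uses -- it needs the absence of $C_4$, $C_6$ and $C_8$ both for the exclusion argument and to make the type I/type II orientation well defined and regular. Second, the non-Hamiltonicity requirement on $F$ is an artifact of your approach; in the paper the alternating pattern is handled by the vertex $v$ together with the counting argument, so no such hypothesis is needed.
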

\subsection{Proof outline}\label{proofstrategy}\label{sec:proof overview}
The main idea in the proof of Theorem \ref{maintheorem} is based on a dichotomy that occurs in the structure of tournaments. We say that a tournament $T$ is \emph{$\delta$-cut-dense} if any balanced partition $(X,Y)$ of $V(T)$ is such that $\overrightarrow{e}(X,Y) \geq \delta|X||Y|$. Note in particular, that every tournament with minimum semidegree at least $\frac{n}{4} + \frac{\delta n}{2}$ is $\delta$-cut-dense. 
We will first consider tournaments which are cut-dense and show that they contain the $k$-th power of a Hamilton cycle
even if the minimum semidegree is slightly below $\frac{n}{4}$. After this, we consider a tournament which has a balanced cut that is sparse in one direction. An overview of what we do for each case is given below.

\nopagebreak[4]
\vspace{0.2cm}
\nopagebreak[4]
\noindent
{\bf Cut-dense tournaments}
\nopagebreak[4]
\vspace{0.2cm}

\nopagebreak[4]
\noindent The following theorem deals with the case of cut-dense tournaments and also provides an answer to the first question raised in the introduction. It shows that the Bollob\'as-H\"aggkvist theorem holds already when $n$ is exponential in $k$. 
\begin{restatable}{thm}{Cutresult}\label{thm:cut result}
Let $k \geq 2$, $\delta > 0$ and $n \geq \left( \frac{3}{\delta} \right)^{1000k}$. Then, any $\delta$-cut-dense tournament $T$ such that $\delta^0(T) \geq \frac{n}{4} - \frac{\delta n}{200}$ has the $k$-th power of a Hamilton cycle. 
\end{restatable}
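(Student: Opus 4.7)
The plan is to use the absorption method, specialized to directed graphs, with cut-density as the main driver of all embedding steps. The argument splits into three modular pieces: an extension/connecting lemma, an absorbing gadget construction, and an almost-spanning greedy covering, tied together in the standard way.

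The first step is an extension lemma: in a $\delta$-cut-dense tournament with semidegree close to $n/4$, for any transitive sub-tournament $u_1 \to \dots \to u_k$ inside $T$, the common out-neighborhood $\bigcap_{i=1}^{k} N^{+}(u_i)$ has size at least $\delta^{O(k)} n$. I would prove this by iteratively applying cut-density to refine a partial common out-neighborhood, losing a factor polynomial in $\delta$ at each added vertex. As an immediate consequence, given a $k$-th power of a directed path whose last $k$ vertices form the required transitive window, one can extend it by a new vertex avoiding any forbidden set of size below $\delta^{O(k)} n / 2$. Iterating extensions from both sides yields a connecting lemma: any two partial powers of paths with compatible ends can be joined through a short bridge avoiding an arbitrary small forbidden set. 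The hypothesis $n \geq (3/\delta)^{1000k}$ provides the slack these bounds need.

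With this in hand, I would build absorbers. For each $v \in V(T)$, I would produce short $k$-th powers of paths $A_v$ of length $O(k)$ not containing $v$, whose vertex set together with $v$ admits a reordering that is also a $k$-th power of a path. Their existence follows from cut-density applied to $N^{+}(v)$ and $N^{-}(v)$, combined with standard absorber-template arguments: one locates transitive sub-tournaments straddling both neighborhoods in a controlled way. Then, sampling a random family of absorbers so that every vertex is covered by many $A_v$'s, and using the connecting lemma to chain them together, I would assemble a single $k$-th power of a path $P_{abs}$ of sub-linear length, with the property that any leftover set $R \subseteq V(T) \setminus V(P_{abs})$ below a certain threshold can be inserted into $P_{abs}$ while preserving the $k$-th power structure.

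To finish, I would apply the extension lemma greedily to grow a $k$-th power of a path from one end of $P_{abs}$, covering all of $V(T) \setminus V(P_{abs})$ except a small leftover $R$; connect its other end back to $P_{abs}$ via the connecting lemma to obtain a $k$-th power of a Hamilton cycle on $V(T) \setminus R$; and finally absorb $R$ through $P_{abs}$. The main obstacle will be calibrating the losses in the extension lemma so that the later steps retain enough room. Each extension loses a multiplicative factor of roughly $\delta^{O(k)}$, and these losses compound through the absorber construction, the random sampling, and the many invocations of the connecting lemma, which is precisely what forces the threshold $n \geq (3/\delta)^{1000k}$. A secondary difficulty is that the semidegree is only barely below $n/4$, so I cannot exploit any semidegree slack and must lean on cut-density for every embedding decision.
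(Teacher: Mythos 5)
Your proposal has a genuine gap at its foundation: the extension lemma you build everything on is false, already for $k=2$. It claims that in a $\delta$-cut-dense tournament with semidegree near $n/4$, \emph{every} transitive $k$-subtournament has common out-neighborhood of size $\delta^{O(k)}n$. Counterexample: take a quasirandom tournament on $n-2$ vertices, fix disjoint sets $A,B$ in it of size about $n/4$, and add two vertices $u_1,u_2$ with $u_1$ beating exactly $A\cup\{u_2\}$, $u_2$ beating exactly $B$, and all other edges directed into $u_1,u_2$. This tournament has minimum semidegree at least $n/4-1$ and is (say) $\tfrac15$-cut-dense, since the two special vertices contribute only $O(n)$ edges to any balanced cut, yet $u_1\to u_2$ is a transitive window with $N^{+}(u_1)\cap N^{+}(u_2)=\emptyset$. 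Cut-density controls edge counts across partitions of the whole vertex set; it gives no lower bound on the out-degree of one prescribed vertex into the out-neighborhood of another, so the "iterative refinement" you describe cannot get started. This is precisely why the paper never extends from an arbitrary window: it works only with heads and tails --- transitive $k$-sets \emph{chosen}, via Lemmas \ref{lem:transitive subtournament}, \ref{prop:KST}, \ref{inclusion-exlusion} and \ref{DRC for sets}, to have common neighborhoods of size $\Omega_k(n)$ --- and maintains this invariant at the two ends of every chain it ever extends or connects. Without it, both your greedy covering step and your absorber construction can get stuck at the first extension.

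Even granting good windows at both ends, your connecting lemma does not follow from "iterating extensions from both sides." The common out-neighborhood of one end and the common in-neighborhood of the other have size only $\Omega_{\delta,k}(n)$, far below $n/2$; they may be disjoint, and cut-density says nothing about the orientation of edges between two small disjoint sets (Lemma \ref{lem:anycutisdense} applies only to partitions of $V(T)$), so naive repeated extension gives no way to steer one path toward the other. The paper's linking lemma (Lemma \ref{lem:linking}) uses a different mechanism: it iterates the neighborhood-expansion sets $M_i,N_i$ until the reachable sets have size close to $n/2$, and only then either exploits a large intersection or uses cut-density across that near-balanced pair to cross, all while keeping the link vertices $10k$-apart in a fixed ordering so that Lemma \ref{lem:destroying powers} guarantees the structures being passed through survive the deletions. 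Your sketch contains no analogue of this expansion-and-crossing argument, nor of the bookkeeping that prevents the many connections and absorptions from destroying the $k$-th powers they borrow vertices from; these are the substantive difficulties of the theorem, not calibration issues absorbed by $n\ge(3/\delta)^{1000k}$.
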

\noindent 
As noted above, tournaments with minimum semidegree at least $\frac{n}{4}+\eps n$ are $2\eps$-cut-dense. Thus, this result implies that we can take $n_0 = \eps^{-O(k)}$ in Theorem \ref{bollobas}. In Section \ref{sec:lower bound cubes}, we show that this behaviour is optimal.  

The first idea in the proof of the above theorem is to partition the tournament into so-called chains $C$, which are ordered structures with the following properties:
\begin{itemize}
\item {\it Robustness.} $C$ is such that even if we delete some of its vertices which are somewhat sparsely distributed in $C$, we get a structure which contains the $k$-th power of a path.
\item {\it Large neighborhoods.} The first $k$ vertices in $C$ have a large common in-neighborhood, and the last $k$ have a large common out-neighborhood.
\end{itemize}
In order to find this partition into chains we use the recent result in \cite{draganic2020powers}, which shows that one can always find the $k$-th power of a long path in a tournament. We apply this iteratively, until a certain constant number of vertices is left. By using the semidegree condition, we also absorb these vertices into other chains.
Call the obtained (disjoint) chains $\mathcal{C}=\{C_1,\ldots,C_t\}$.
To finish the proof, we "link" the chains, by always connecting the last $k$ vertices of $C_i$ to the first $k$ vertices of $C_{i+1}$ (and $C_t$ to $C_1$) with $k$-th powers of paths. In order to create the links between the chains, we are free to use the internal vertices of the other chains in $\mathcal{C}$, but in such a way that the robustness property ensures that after deleting the used vertices from the chains in
$\mathcal{C}$, we still have $k$-th powers of paths. This gives the desired $k$-th power of a Hamilton cycle.

The main tool for linking two chains will be the linking lemma (Lemma \ref{lem:linking}). Suppose we want to link $C_i$ to $C_{i+1}$. The rough idea is to consider the set of vertices $A$ which are in some sense \textit{reachable} by $k$-th powers of paths starting at the set of last $k$-vertices in $C_i$; similarly, we consider the set $B$ of vertices which can reach the first $k$ vertices of $C_{i+1}$. Because of the minimum semidegree condition, $A$ and $B$ will be of sizes close to $n/2$. Then we consider two cases. Either the intersection $S=A\cap B$ is large, and thus we can find a connection between $C_i$ and $C_{i+1}$ which passes through $S$; or $S$ is small, and then we can use the $\delta$-cut-dense property of $T$, to find a connection between $A$ and $B$, and consequently establish a connection between $C_i$ and $C_{i+1}$. 

\vspace{0.2cm}
\noindent
{\bf Tournaments with a sparse cut}
\vspace{0.2cm}

\noindent
In the second part of the proof, we consider the case of $T$ having a balanced cut which is sparse in one of the directions, that is, the number of edges in this direction is $o(n^2)$. The first thing to do is to convert this cut into sets $A,B,R$ which partition the vertex set and are such that $|R| = o(n)$ and both tournaments $T[A],T[B]$ are $o(n)$-almost regular, of size $\frac{n}{2}-o(n)$ and such that $\overrightarrow{e}(A,B) = o(n^2)$. 

Let us first consider the case when $R = \emptyset$ and $|A| = |B| = \frac{n}{2}$ in order to give a rough outline of some ideas. As a preliminary, note that since the average out-degree in $T[A]$ is at most $\frac{n}{4}$, we have that $\overrightarrow{e}(A,B) \geq c|A|n^{1-1/\lceil k/2 \rceil} = \Omega(n^{2-1/\lceil k/2 \rceil})$. Naturally, the first step is to find a way to cross from $A$ to $B$ and from $B$ to $A$. Specifically, we will want to find transitive subtournaments $A_1,A_2 \subseteq A$ and $B_1,B_2 \subseteq B$ of size $k$ such that $(A_1,B_1)$ forms a $k$-th power of a path of size $2k$ starting at $A_1$ and ending at $B_1$ and $(B_2,A_2)$ forms a $k$-th power starting at $B_2$ and ending at $A_2$. Now, since the density from $B$ to $A$ is $1-o(1)$, finding $A_2$ and $B_2$ is not difficult. The bottleneck of the problem is in finding $A_1$ and $B_1$, which is heavily dependent on the number of edges going from $A$ to $B$. 

Indeed, assume that such $A_1$ and $B_1$ exist and define $A'_1$ to be the set of last $\lceil k/2 \rceil$ vertices of $A_1$ and $B'_1$ the set of first $\lceil k/2 \rceil$ vertices of $B_1$. Then, we have that every vertex in $A'_1$ dominates every vertex in $B'_1$, which creates a $K_{\lceil k/2 \rceil,\lceil k/2 \rceil}$ in the graph formed by the edges going from $A$ to $B$. Therefore, in general, to find such sets $A_1$ and $B_1$ we would need
the number of edges from $A$ to $B$ to be at least the Tur\'an number of  $K_{\lceil k/2 \rceil,\lceil k/2 \rceil}$ which is believed to be $\Theta(n^{2-1/\lceil k/2 \rceil})$.
In Section \ref{abridge}, we show that this number of edges is also sufficient to find $A_1, B_1$ as above.

After constructing the sets $A_1,A_2,B_1,B_2$, it is simple to finish. Recall that $T[A]$ and $T[B]$ are $o(n)$-almost regular. Therefore, $T[A \setminus (A_1 \cup A_2)]$ and $T[B \setminus (B_1 \cup B_2)]$ are $\frac{1}{4}$-cut-dense tournaments and so, by Theorem \ref{thm:cut result} they contain spanning chains $C_A$ and $C_B$. We can link the end of the chain $C_A$ to $A_1$, then link $B_1$ to the start of $C_B$, the end of $C_B$ to $B_2$ and finally link $A_2$ to the start of $C_A$. This produces the $k$-th power of a Hamilton cycle.

The general case builds on the above approach. The main goal will be to cover the set $R$ with a collection $\mathcal{B}$ of $o(n)$ vertex-disjoint structures, which we will call bridges. Informally, a bridge is the $k$-th power of a path which intersects $A \cup B$ in at most $4k$ vertices, such that the sets of first $k$ vertices and last $k$ are contained entirely in $A$ or $B$. We will say that a bridge goes from $B$ to $A$, for example, if the first $k$ vertices are in $B$ and the last $k$ are in $A$. We also want these first $k$ to have large common in-neighborhood in $B$ (so that it can be linked later to the rest of $B$) and the last $k$ to have large common out-neighborhood in $A$. Finally, we will crucially need the collection $\mathcal{B}$ to satisfy the following property. The number of bridges going from $A$ to $B$ is positive and equal to the number of them going from $B$ to $A$. Indeed, note that if this is the case, we can then construct the $k$-th power of a Hamilton cycle by using these bridges to cover $R$ and then using the $o(n)$-almost regularity of both $T[A]$ and $T[B]$ to link them to the rest of the vertices in $A$ and $B$, like we did in the case $R=\emptyset$. Since the number of bridges going from $A$ to $B$ is positive, we are able to cross from $A$ to $B$ at least once, and further, since we have the same number of bridges going from $B$ to $A$, we are able to cross the same number of times from $B$ to $A$. The construction of bridges is delicate and requires several ideas which we discuss in Section \ref{sec:proof of main result}.

\vspace{0.5 cm} 
\noindent \textbf{Notation:} Throughout the paper we use standard graph theoretic notation. We use the following notation for directed graphs. An oriented graph is a directed graph in which between any two vertices there is at most one edge.
A tournament is an oriented complete graph. Let $T$ be a tournament. By $V(T)$ we denote its set of vertices. Let $\{v\},X,Y\subseteq V(T)$. By $\overrightarrow{E}(X,Y)$ we denote the set of edges in $T$ oriented from $X$ to $Y$, and we let $\overrightarrow{e}(X,Y)=|\overrightarrow{E}(X,Y)|$. If $\overrightarrow{E}(X,Y)=X\times Y$ we say that $X$ dominates $Y$, and write $X\rightarrow Y$. If $\{v\}$ dominates $X$ we also write $v\rightarrow X$. By $N^-(v)$ we denote the set of \emph{(common) in-neighbors} of $v$, i.e. the set of vertices $u$ such that $u\rightarrow v$; by $N^+(v)$ we denote the set of \emph{(common) out-neighbors} of $v$, i.e. the set of vertices $u$ such that $v\rightarrow u$. Furthermore, we let $N^-_X(v)=N^-(v)\cap X$ and $N^+_X(v)=N^+(v)\cap X$; we let $d_X^-(v)=|N^-_X(v)|$ be the in-degree and $d_X^+(v)=|N^+_X(v)|$ the out-degree of $v$ in $X$.
By $\delta^0(T)$ we denote the \emph{minimum semidegree} of $T$, i.e. the minimum over all in- and out-degrees in $T$. A \emph{balanced cut} of $T$ is a partition $X,Y$ of its vertex set such that $||X|-|Y|| \leq 1$. A tournament $T$ is said to be $d$-almost regular if $\delta^0(T) \geq \frac{1}{2}n - d$. If a tournament is $1$-almost regular, we say it is regular. By $T[X]$ we denote the subtournament in $T$ induced by $X$. If $Z$ is an ordered subset of $V(T)$, we slightly abuse notation by saying that $Z$ is a $k$-th power of a path whenever $T[Z]$ contains a spanning directed $k$-th power of a path where the ordering of the path is inherited from the ordering of $Z$. Similarly, we say that disjoint sets of vertices $X_1,\ldots,X_t$ in $V(T)$ form the \emph{$k$-blowup of a path}, if every $X_i$ is of size $k$, and $X_i\rightarrow X_{i+1}$ for all $i\in [t-1]$; we call the sets $X_i$ \emph{blobs}.
\vspace{0.5 cm}

\noindent 
The rest of this paper is organized as follows. In Section \ref{sec:preliminaries}, we will gather some useful results and prove a few technical lemmas that will be used in our proofs. In Section \ref{cutdense}, we prove Theorem \ref{thm:cut result} and in Section \ref{abridge}, we set the stage for the proof of our main result; in both sections we give results which might be of independent interest. In Section \ref{sec:proof of main result}, we prove Theorem \ref{maintheorem}. In Section \ref{sec:lower bound cubes}, we give the proofs of Theorems \ref{generallowerbound} and \ref{cubesthm}. Finally, we make some concluding remarks in Section \ref{sec:concluding remarks}.
\section{Preliminaries}\label{sec:preliminaries}

\subsection{A few simple lemmas} 
We will first give several standard lemmas and definitions. The first lemma, which we state without proof, is the following folklore result about tournaments.
\begin{lem}\label{lem:transitive subtournament}
Let $T$ be a tournament on at least $2^{k-1}$ vertices. Then it contains a transitive sub-tournament on $k$ vertices. Furthermore, a fraction of more than $1/2^{k^2}$ of its $k$-subsets induce transitive tournaments.
\end{lem}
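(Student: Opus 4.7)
The plan is a two-step argument: first establish the existence of a transitive $k$-subset by a pigeonhole induction on $k$; then derive the quantitative fraction statement by a simple double counting on pairs (transitive $k$-set, $2^{k-1}$-set containing it).

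\textbf{Existence.} I would induct on $k$. The base cases $k\le 2$ are immediate since any single vertex or any two vertices trivially induce a transitive tournament. For the inductive step, assume the claim holds for $k-1$ and let $T$ have $n\ge 2^{k-1}$ vertices. Pick any $v\in V(T)$. Since $d^+(v)+d^-(v)=n-1\ge 2^{k-1}-1$, pigeonhole gives either $|N^+(v)|\ge 2^{k-2}$ or $|N^-(v)|\ge 2^{k-2}$. Without loss of generality $|N^+(v)|\ge 2^{k-2}$; by induction $T[N^+(v)]$ contains a transitive sub-tournament on $k-1$ vertices with ordering $u_1,\dots,u_{k-1}$, and then $v,u_1,\dots,u_{k-1}$ is transitive on $k$ vertices.

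\textbf{Counting via double counting.} Let $\mathcal{T}$ denote the collection of $k$-subsets of $V(T)$ that induce transitive sub-tournaments, and set $m=2^{k-1}$. I would count pairs $(S,R)$ with $S\in\mathcal{T}$, $|R|=m$ and $S\subseteq R\subseteq V(T)$ in two ways. Applying the existence part to each induced subtournament $T[R]$, every such $R$ contains at least one transitive $k$-subset, so the number of pairs is at least $\binom{n}{m}$. On the other hand, each $S\in\mathcal{T}$ sits in exactly $\binom{n-k}{m-k}$ such $R$'s. Combining,
\[
|\mathcal{T}|\;\ge\;\frac{\binom{n}{m}}{\binom{n-k}{m-k}}\;=\;\frac{\binom{n}{k}}{\binom{m}{k}}.
\]
Using the bound $\binom{m}{k}< m^k/k!=2^{k(k-1)}/k!$, the fraction of transitive $k$-subsets is strictly greater than
\[
\frac{k!}{2^{k(k-1)}}\;=\;\frac{k!\cdot 2^k}{2^{k^2}}\;>\;\frac{1}{2^{k^2}},
\]
since $k!\cdot 2^k\ge 2$ for all $k\ge 1$.

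\textbf{Main obstacle.} There is no substantive obstacle — both steps are elementary. The only care needed is to keep the final inequality strict so that the stated ``more than $1/2^{k^2}$'' form is obtained; this is why I bound $\binom{m}{k}$ by $m^k/k!$ rather than trying to be tight.
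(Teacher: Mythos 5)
Your proposal is correct and matches the paper's treatment: the paper states the existence part as the standard folklore Ramsey-type fact (provable exactly by your pigeonhole induction) and obtains the density statement by the very same double counting over pairs $(S,R)$ with $|R|=2^{k-1}$, bounding the resulting ratio $1/\binom{2^{k-1}}{k}$ from below by $1/2^{k(k-1)} > 1/2^{k^2}$; your extra factor of $k!$ is only a cosmetic sharpening. (The only microscopic slip is that $\binom{m}{k}<m^k/k!$ is an equality at $k=1$, but the statement is trivial there, so nothing is lost.)
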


\noindent
Apart from the standard Ramsey-type result, the above lemma also gives us a density statement for transitive tournaments of size $k$. This follows from the following simple averaging argument, which we also use in 
Lemma \ref{inclusion-exlusion}. Suppose we know that every set $S$ of $s$ vertices contains a $k$-subset which satisfies some property $\mathcal{P}$ (for example, being transitive). We now take a set $R$ on at least $s$ vertices and want to count the number of $k$-subsets of $R$ which satisfy the property $\mathcal{P}$. Since every set of $s$ vertices contains a $k$-set satisfying $\mathcal{P}$ and every $k$-set is contained in ${\binom{|R|-k}{s-k}}$ sets of size $s$, we know that the number of $k$-subsets of $R$ which satisfy property $\mathcal{P}$ is at least ${\binom{|R|}{s}}/{\binom{|R|-k}{s-k}} > \frac{1}{s^k} {|R| \choose k}$. Thus, the fraction of $k$-sets in $R$ satisfying $\mathcal{P}$ is greater than $1/s^k$.

We also state here the following result from  \cite{draganic2020powers} which was mentioned in the introduction.
\begin{thm}\label{thm:long paths}
Every tournament on $n$ vertices contains a copy of $P_m^k$ where $m\geq\frac{n}{2^{8k}}$ and $k>1$.
\end{thm}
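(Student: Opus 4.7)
The plan is to prove the theorem by induction on $k$.

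For the base case $k=2$, one invokes the theorem of Havet and Thomass\'e which guarantees every tournament contains the square of a Hamilton path; this gives $m = n \geq n/2^{16}$ immediately.

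For the inductive step, suppose the bound holds for $k-1$ and let $T$ be a tournament on $n$ vertices. Applying the inductive hypothesis inside $T$ produces a $(k-1)$-th power of a path $Q$ on vertices $v_1 < v_2 < \cdots < v_N$ with $N \geq n/2^{8(k-1)}$. Every $k$ consecutive vertices of $Q$ already span a transitive sub-tournament, so the only edges whose orientation is \emph{not} forced by $Q$ being a $P^{k-1}$ are those between $v_i$ and $v_j$ with $j - i \geq k$. The task therefore reduces to extracting from $Q$ a subsequence of length at least $N/2^8$ along which every window of $k+1$ consecutive chosen vertices induces a transitive sub-tournament; such a subsequence is by definition a $P^k$, and the resulting length is $N/2^8 \geq n/2^{8k}$, matching the target.

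To carry out the extraction I would walk along $Q$ from left to right using a constant-size look-ahead window, greedily picking the next vertex so that the last $k$ already-picked vertices all dominate it in $T$. The averaging observation described just after Lemma \ref{lem:transitive subtournament} should guarantee that inside any look-ahead window a positive constant fraction of vertices are valid candidates, because most of the required forward edges (all those of $Q$-distance at most $k-1$) are automatically present from the $P^{k-1}$-structure, so only a bounded number of "long diagonal" edges actually need to be controlled. The main obstacle, and the delicate point of the argument, is keeping both the look-ahead and the per-step multiplicative loss truly independent of $k$: the naive block-partition strategy that searches for a transitive $k$-subset inside each block forces blocks of size at least $2^{k-1}$ and gives only $m \geq n/2^{\Omega(k^2)}$. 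Achieving the stronger $n/2^{8k}$ bound therefore requires genuinely exploiting the $P^{k-1}$-structure handed down by the inductive hypothesis rather than rebuilding transitive $k$-sets from scratch at each stage.
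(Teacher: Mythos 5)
First, a point of reference: the paper does not prove Theorem \ref{thm:long paths} at all --- it is quoted verbatim from \cite{draganic2020powers} and used as a black box --- so there is no in-paper proof to compare your argument against; it must stand on its own. It does not. The base case is simply false: there is no theorem of Havet and Thomass\'e asserting that every tournament contains the square of a Hamilton path (their result is that every large tournament contains every \emph{orientation} of a Hamilton path, which is unrelated to powers). In fact the statement you invoke cannot be true: in the cyclic triangle, and more generally in iterated blow-ups of it, the longest square of a path has only about $2n/3$ vertices, so already for $k=2$ obtaining a power of a path of linear length is a genuinely nontrivial theorem (it is one of the main results of \cite{draganic2020powers}), not something you can import for free.

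The inductive step also does not go through as described. Once you pass from the $(k-1)$-th power $Q$ to a subsequence of density $2^{-8}$, consecutive chosen vertices are typically at $Q$-distance around $2^8$, and the last $k$ chosen vertices span a $Q$-interval of length roughly $2^8k$; the $P^{k-1}$-structure only forces edges between vertices at $Q$-distance at most $k-1$, so essentially \emph{none} of the edges you need are ``automatically present'' --- the opposite of your claim that only a bounded number of long diagonals must be controlled. The greedy one-pass selection then has no guarantee: the common out-neighbourhood of the last $k$ chosen vertices inside a constant-size look-ahead window can be empty, and any fix via Ramsey/averaging arguments in the spirit of Lemma \ref{lem:transitive subtournament} costs factors like $2^{k}$ or $2^{k^2}$ per step, destroying the $k$-independent constant loss that your bookkeeping ($N/2^8\ge n/2^{8k}$) requires. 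You acknowledge this yourself in the last sentence, but that quantitative point is precisely the content of the theorem, so what you have is a plan with its central step missing rather than a proof.
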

Another useful lemma about tournaments is the following, which shows that tournaments which are cut-dense and have high minimum semi-degree are such that all of their cuts are dense, not only the ones that are balanced.

\begin{lem}\label{lem:anycutisdense}
Let $T$ be a $\delta$-cut-dense tournament on $n$ vertices such that $\delta^0(T)\ge \frac{n}{4}-\frac{\delta n}{16}$. If $(X,Y)$ is a partition of its vertex set, then $\overrightarrow{e}(X,Y) \geq \frac{\delta}{16}|X||Y|$.
\end{lem}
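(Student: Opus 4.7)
The plan is to prove this by a two-case argument depending on how close the cut $(X,Y)$ is to being balanced. Write $x=|X|$ and $y=|Y|$ and assume without loss of generality that $x\le y$, so $x\le \lfloor n/2\rfloor$. The case split is at the threshold $x = \lfloor n/2\rfloor - \delta n/4$.

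In the first case, $x \le \lfloor n/2\rfloor - \delta n/4$, the cut is unbalanced enough for the semidegree hypothesis alone to suffice. Summing out-degrees over $X$ yields
\[
x\left(\tfrac{n}{4}-\tfrac{\delta n}{16}\right) \;\le\; \sum_{v\in X} d^+(v) \;=\; \binom{x}{2} + \overrightarrow{e}(X,Y),
\]
so $\overrightarrow{e}(X,Y) \ge x\bigl[(n-2x+2)/4 - \delta n/16\bigr]$. The case hypothesis, combined with $n-2\lfloor n/2\rfloor \ge 0$, gives $n-2x \ge \delta n/2$, so the bracket is at least $\delta n/16$. Therefore $\overrightarrow{e}(X,Y) \ge \delta xn/16 \ge \delta xy/16$, using $y\le n$.

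In the second case, $x > \lfloor n/2\rfloor - \delta n/4$, I convert the nearly balanced cut into a truly balanced one and then invoke cut-density. Pick any $Z\subseteq Y$ with $|Z|=\lfloor n/2\rfloor - x$; the case hypothesis gives $0\le |Z| < \delta n/4$, and the partition $(X\cup Z, Y\setminus Z)$ has sides of sizes $\lfloor n/2\rfloor$ and $\lceil n/2\rceil$. By $\delta$-cut-density and the trivial upper bound $\overrightarrow{e}(Z, Y\setminus Z) \le |Z|\lceil n/2\rceil$,
\[
\delta \lfloor n/2\rfloor\lceil n/2\rceil \;\le\; \overrightarrow{e}(X\cup Z, Y\setminus Z) \;\le\; \overrightarrow{e}(X,Y) + |Z|\lceil n/2\rceil.
\]
Rearranging and using $\lfloor n/2\rfloor - n/4 \ge (n-2)/4$ gives $\overrightarrow{e}(X,Y) \ge \delta n(n-2)/8$, which comfortably exceeds the target $\delta xy/16 \le \delta n^2/64$ as soon as $n\ge 3$.

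I do not expect any substantive obstacle here; the only slightly delicate point is the choice of the threshold $\delta n/4$ for the case split. It must be small enough that the error term $|Z|\lceil n/2\rceil$ in Case 2 does not eat up the cut-density guarantee, and large enough that the semidegree surplus in Case 1 still produces a lower bound of at least $\delta xy/16$. The value $\delta n/4$ balances these two requirements, and the constant $1/16$ in the statement is essentially what this balance produces.
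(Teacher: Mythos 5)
Your proof is correct and follows essentially the same route as the paper: a degree-sum (double counting) bound over $X$ when the cut is sufficiently unbalanced, and a comparison with a nearby balanced cut via $\delta$-cut-density otherwise, with the same threshold $\approx \frac{n}{2}-\frac{\delta n}{4}$ and the same error estimate of order $\frac{\delta n}{4}\cdot\frac{n}{2}$ in the balanced case. Your version is just slightly more explicit (choosing the set $Z$ and tracking floors/ceilings) than the paper's.
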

\begin{proof}
Suppose without loss of generality that $|X| \leq |Y|$. Then, by double-counting the sum $\sum_{v \in X} d^{+}(v)$ we get
$$|X| \left(\frac{n}{4} - \frac{\delta n}{16}\right) \leq \sum_{v \in X} d^{+}(v) \leq \frac{|X|^2}{2} + \overrightarrow{e}(X,Y)$$
which implies that $\overrightarrow{e}(X,Y) \geq |X| \left(\frac{n}{4} - \frac{\delta n}{16} - \frac{|X|}{2} \right)$. If $|X| \leq \frac{n}{2} - \frac{\delta n}{4}$, then this gives $\overrightarrow{e}(X,Y) \geq \frac{\delta}{16}|X||Y|$. Otherwise, we can obtain $X,Y$ from a balanced partition by moving at most $\frac{\delta n}{4}$ vertices from one part to the other. Hence, since $T$ is $\delta$-cut-dense, we have
$$\overrightarrow{e}(X,Y) \geq \frac{\delta n^2}{4} - \frac{\delta n}{4} \cdot \frac{n}{2} = \frac{\delta n^2}{8} \geq \frac{\delta}{8}|X||Y|$$
\end{proof}
Next, we will give a lemma about powers of paths, showing that if we delete vertices from a large power, if those vertices are in some sense sparsely distributed, then the remaining vertices still span a large power of a path. Before, we will need the following definition, which captures the notion of being sparsely distributed.
\begin{defn}
Let $\pi=(v_0,\ldots,v_m)$ be an ordering of a set of vertices $V$ and let $\mathcal{S} = \{S_1,\ldots,S_q\}$ be a collection of sets. We say that $\mathcal{S}$ is \emph{$r$-apart in $\pi$} if for any two distinct $S_i,S_j$, the distance in $\pi$ between any two  vertices $u_i \in S_i \cap V$ and $u_j \in S_j \cap V$ is more than $r$. For a set $A$, we let $I_r(A,\pi)$ denote the set of vertices $v \in V$ such that there is some $a \in A \cap V$ which is at a distance of at most $r$ from $v$.
\end{defn}
\begin{lem} \label{lem:destroying powers}
Let $r_1 > r_2$ and $T$ be a tournament. Suppose $P = (v_0,v_1\ldots, v_m)$ is the $r_1$-th power of a path. Let $\mathcal{S} = \{S_i\}$ be a collection of subsets of $V(T)$ which is $r_1$-apart in $P$ and such that $|S_i|\leq r_2$ for all $i$. Then, $P \setminus \bigcup_i S_i$ is the $(r_1-r_2)$-th power of a path.
\end{lem}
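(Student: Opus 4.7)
The plan is to translate the $r_1$-apart condition into a simple statement about how many vertices can be deleted from any short interval of $P$, and then use this to bound the positional gap between nearby surviving vertices. Concretely, I will show that if two surviving vertices $v_a$ and $v_b$ (with $a<b$) lie within distance $r_1-r_2$ in the restricted sequence $P':=P \setminus \bigcup_i S_i$, then $b-a \le r_1$. Since $P$ is an $r_1$-th power of a path, this immediately gives the directed edge $v_a\to v_b$, which is exactly what is needed for $P'$ to be the $(r_1-r_2)$-th power of a path.

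The key structural observation is that for any window of $r_1+1$ consecutive positions $\{p,p+1,\dots,p+r_1\}$ along $P$, all removed vertices lying in that window belong to a single $S_i$. Indeed, if $v_{p_1}\in S_{i_1}\cap V(P)$ and $v_{p_2}\in S_{i_2}\cap V(P)$ both lay in such a window with $i_1\neq i_2$, then $|p_1-p_2|\le r_1$, contradicting the assumption that $\mathcal{S}$ is $r_1$-apart in $P$. Combined with $|S_i|\le r_2$, this yields the crucial bound: \emph{in any window of $r_1+1$ consecutive positions of $P$, at most $r_2$ vertices are removed}.

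To finish, suppose for contradiction that $v_a$ and $v_b$ survive with $a<b$, that their distance in $P'$ is $d\le r_1-r_2$, but $b-a>r_1$. Then the $r_1$ positions $a+1,a+2,\dots,a+r_1$ form a subset of a window of $r_1+1$ consecutive positions and lie strictly between $v_a$ and $v_b$ in $P$. By the boxed observation, at most $r_2$ of these positions are removed, so at least $r_1-r_2$ of their occupants survive and lie strictly between $v_a$ and $v_b$ in $P'$. This forces $d\ge r_1-r_2+1$, contradicting $d\le r_1-r_2$. The only mildly subtle point is pinning down the ``one $S_i$ per window'' observation correctly; once it is in hand, the rest is a direct counting argument, and I do not expect any genuine obstacle.
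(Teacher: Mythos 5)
Your proof is correct and is essentially the same argument as the paper's: both isolate a window of $r_1+1$ consecutive positions of $P$, use the $r_1$-apart hypothesis to see that all deleted vertices in such a window come from a single $S_i$ (hence at most $r_2$ deletions per window), and derive a contradiction with a would-be gap of more than $r_1$ between surviving vertices that are within $r_1-r_2$ of each other in the restricted sequence. The only cosmetic difference is that you count survivors inside the window while the paper counts deleted vertices, which is the same estimate stated in complementary form.
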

\begin{proof}
Let $P \setminus \bigcup_i S_i = (v_{j_1},v_{j_2}, \ldots, v_{j_t})$ with $j_1 < j_2 < \ldots < j_t$. Suppose for a contradiction that there is a $t'$ such that $j_{t' + r_1-r_2} > j_{t'} + r_1$. Consider the interval $I = \{v_{j_{t'}}, v_{j_{t'} + 1}, \ldots, v_{j_{t'} + r_1}\}$. Then, there must be more than $r_2$ vertices in $I \cap \bigcup_i S_i $. However, by the assumption on the sets $S_i$ there can be at most one $i$ such that $I \cap S_i \neq \emptyset$, thus implying that $|S_i| > r_2$, which contradicts the assumption.
\end{proof}
We now state the following standard lemma which uses K\"{o}v\'{a}ri-S\'{o}s-Tur\'{a}n-like ideas.
\begin{lem}\label{prop:KST}
Let $\alpha \in (0,1)$ and $t,k, N \in \mathbb{N}$ be such that $t \geq \frac{k}{\alpha}$. Let $\{S_1, \ldots, S_t\}$ be a collection of subsets of $[N]$ of size at least $\alpha N$. Then, there exists $k$ sets $S_i$ which intersect in at least $\left( \frac{\alpha}{e} \right)^k N$ elements.
\end{lem}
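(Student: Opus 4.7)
The plan is a classical double-counting argument in the spirit of Kővári--Sós--Turán. I would count, in two ways, the number of pairs $(I,x)$ where $I \in \binom{[t]}{k}$ is a $k$-subset of indices and $x \in [N]$ is a common element, that is, $x \in \bigcap_{i \in I} S_i$.

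Summing first over $I$, the count equals $\sum_{I \in \binom{[t]}{k}} \bigl|\bigcap_{i \in I} S_i\bigr|$, which is at most $\binom{t}{k}$ times the maximum $k$-wise intersection. Summing first over $x$, and setting $d(x) = |\{\,i : x \in S_i\,\}|$, the count equals $\sum_{x \in [N]} \binom{d(x)}{k}$. Since $\sum_{x} d(x) = \sum_{i=1}^t |S_i| \geq t\alpha N$, the average degree satisfies $\bar d := \frac{1}{N}\sum_x d(x) \geq t\alpha \geq k$ by the hypothesis $t \geq k/\alpha$. The function $d \mapsto \binom{d}{k}$ is convex (its second forward difference is $\binom{d-1}{k-2} \geq 0$), so Jensen's inequality yields
\[
\sum_{x \in [N]} \binom{d(x)}{k} \;\geq\; N\binom{\bar d}{k} \;\geq\; N\binom{t\alpha}{k}.
\]

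Combining the two counts, there exists some $k$-subset $I$ with $\bigl|\bigcap_{i \in I} S_i\bigr| \geq N\binom{t\alpha}{k}/\binom{t}{k}$. Now I would apply the standard estimates $\binom{n}{k} \geq (n/k)^k$ (valid when $n \geq k$, which holds for $n = t\alpha$ by assumption) and $\binom{n}{k} \leq (en/k)^k$, to obtain
\[
\frac{N\binom{t\alpha}{k}}{\binom{t}{k}} \;\geq\; \frac{N\,(t\alpha/k)^k}{(et/k)^k} \;=\; N\left(\frac{\alpha}{e}\right)^{k},
\]
as required.

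The only slightly delicate point is justifying Jensen when $\bar d$ is not an integer; this is handled routinely by noting that $\binom{\cdot}{k}$ is already discretely convex (from the second-difference identity above), so the piecewise-linear extension is convex, and for $\bar d \geq k$ this majorizes the polynomial extension $\binom{\bar d}{k} = \bar d(\bar d - 1)\cdots(\bar d - k+1)/k!$. Thus the chain $\sum_x \binom{d(x)}{k} \geq N\binom{\bar d}{k} \geq N(\bar d/k)^k$ is valid. No other real obstacle arises; the whole argument is essentially a neat averaging.
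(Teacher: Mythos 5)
Your proof is correct and follows essentially the same route as the paper's: the same double count of pairs $(I,x)$ with the convexity (Jensen) step $\sum_x \binom{d(x)}{k} \geq N\binom{\alpha t}{k}$ and the same estimates $\binom{\alpha t}{k} \geq (\alpha t/k)^k$, $\binom{t}{k} \leq (et/k)^k$; the paper merely phrases it as a proof by contradiction with $x = (\alpha/e)^k N$. Your extra care about Jensen for non-integer averages is fine but routine, so there is nothing to fix.
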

\begin{proof}
Let $x=\left( \frac{\alpha}{e} \right)^k N$. Define a bipartite graph $G$ with parts $\{S_1,\ldots, S_t\}$ and $[N]$ and let $\{S_i,j\}$ be an edge if and only if $j\in S_i$. Suppose, for contradiction sake, that every $k$ sets $S_i$ have less than $x$ common neighbors in $[N]$. Then, by double-counting and convexity, note that \begin{equation}\label{EquationGoesBrrrr}
x\binom{t}{k} > \sum_{v\in [N]}\binom{d(v)}{k} \geq  N \binom{\alpha t}{k}
\end{equation}
Note that ${t \choose k}/{\alpha t \choose k} \leq \left(\frac{e t}{k} \right)^k / \left(\frac{\alpha t}{k} \right)^k$;
so, by definition of $x$, we have ${t \choose k}/{\alpha t \choose k} \leq \frac{N}{x}$, contradicting (\ref{EquationGoesBrrrr}). 
\end{proof}
We now give some definitions of structures which will be crucial throughout the paper. After, we give a lemma which is a consequence of Lemmas \ref{lem:transitive subtournament} and \ref{prop:KST}. 
\begin{defn}\label{def:head and tail}
Let $T$ be a tournament and let $A,S\subseteq V(T)$. We say that $S$ is a \emph{$A$-head (resp. $A$-tail)}, if $S$ induces a transitive tournament and is such that $N_A^+(S)\geq |A|/2^{6k}$ (resp. $N_A^-(S)\geq |A|/2^{6k}$), where $k = |S|$. If $A=V(T)$ then we just say that $S$ is a \emph{head (resp. tail)}. Furthermore, if $P\subseteq T$ is the $k$-th power of a path and $A,B \subseteq V(T)$ are such that the first $k$ vertices of $P$ form a $A$-tail and the last $k$ form a $B$-head, then we say that $P$ is a \emph{$(A,B,k)$-chain}. If $A = B = V(T)$, we just say that $P$ is a \emph{$k$-chain}.
\end{defn}
\begin{lem}\label{inclusion-exlusion}
Let $T$ be a tournament and $A,S \subseteq V(T)$ such that $|S| \geq 2^{30k}$. Suppose that every vertex $v \in S$ has at least $|A|/20$ out(resp. in)-neighbors in $A$. Then, $S$ contains a $A$-head (resp. $A$-tail) of size $k$. Consequently, any subset of at least $2^{30k}$ vertices of $T$, all of which have at least $|A|/20$ out(resp. in)-neighbors in $A$ is such that a fraction of at least $1/2^{30k^2}$ of its $k$-sets are $A$-heads (resp. $A$-tails).
\end{lem}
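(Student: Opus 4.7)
The plan is to first prove the existence statement (that $S$ contains an $A$-head of size $k$) and then deduce the density statement via the averaging argument flagged in the remark after Lemma~\ref{lem:transitive subtournament}. The tail version is immediate from the head version by reversing every edge orientation, so we only discuss heads.

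For existence I would use dependent random choice. Sample $U=(u_1,\ldots,u_t)\in A^t$ uniformly at random and independently with $t=2k$, and let $B=\{v\in S:v\to u_j\text{ for all }j\in[t]\}$. By the out-neighborhood hypothesis, for every $k$-subset $S'\subseteq S$ one has $\Pr[S'\subseteq B]=(|N^+_A(S')|/|A|)^t$; Jensen's inequality applied to $x\mapsto x^t$ then gives
\[ \mathbb{E}\!\left[\binom{|B|}{k}\right]=\sum_{S'}\left(\tfrac{|N^+_A(S')|}{|A|}\right)^{\!t}\ge \binom{|S|}{k}\cdot \mu^t, \]
where $\mu=\mathbb{E}_{S'}[|N^+_A(S')|/|A|]$. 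Since $\sum_{S'}|N^+_A(S')|=\sum_{u\in A}\binom{d^-_S(u)}{k}$, convexity of $\binom{\cdot}{k}$ combined with $\sum_u d^-_S(u)\ge |A||S|/20$ yields $\mu\ge \binom{|S|/20}{k}/\binom{|S|}{k}\ge 1/40^k$, hence $\mathbb{E}[\binom{|B|}{k}]\ge\binom{|S|}{k}/40^{kt}$. Because $\mathbb{E}[|B|]\ge |S|/20^t\gg 2^{k-1}$, applying the density half of Lemma~\ref{lem:transitive subtournament} pointwise on each realisation of $B$ with $|B|\ge 2^{k-1}$ (the contribution from smaller $|B|$ is easily absorbed) gives
\[ \mathbb{E}\bigl[\#\{\text{transitive $k$-subsets of }B\}\bigr]\ge \binom{|S|}{k}\Big/\bigl(40^{kt}\cdot 2^{k^2+1}\bigr). \]

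On the other hand, any transitive $k$-subset $S'\subseteq S$ that is \emph{not} an $A$-head has $|N^+_A(S')|<|A|/2^{6k}$, so $\Pr[S'\subseteq B]<1/2^{6kt}$ and
\[ \mathbb{E}\bigl[\#\{\text{non-$A$-head transitive $k$-subsets of }B\}\bigr]<\binom{|S|}{k}/2^{6kt}. \]
The argument reduces to verifying $1/(40^{kt}\cdot 2^{k^2+1})>1/2^{6kt}$, equivalently $(6-\log_2 40)\,kt>k^2+1$; with $t=2k$ this becomes $\approx 1.36\,k^2>k^2+1$, which is comfortable for $k\ge 2$ (the case $k=1$ is trivial since any single vertex of $S$ already has $\ge|A|/20\ge|A|/2^6$ out-neighbors). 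Thus $\mathbb{E}[\#\{A\text{-heads inside }B\}]>0$, so an $A$-head of size $k$ exists in $S$.

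Finally the density statement follows by the averaging argument after Lemma~\ref{lem:transitive subtournament}: for any $R\subseteq V(T)$ with $|R|\ge 2^{30k}$ satisfying the hypothesis, the existence statement applies to every $2^{30k}$-subset $X\subseteq R$, and double-counting incidences $(S',X)$ with $S'$ an $A$-head and $S'\subseteq X$ yields that at least $\binom{|R|}{k}/\binom{2^{30k}}{k}\ge \binom{|R|}{k}/2^{30k^2}$ of the $k$-subsets of $R$ are $A$-heads. The main obstacle is the balancing of $t$: it must be large enough for the $2^{-6kt}$-weighted non-head transitive subsets to be dominated, yet small enough that $\mathbb{E}[|B|]=|S|/20^t$ comfortably beats the Ramsey threshold $2^{k-1}$, and such a window exists precisely because $\log_2 40<6$.
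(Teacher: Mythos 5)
Your proposal is correct, but it takes a genuinely different route from the paper's proof. The paper argues deterministically in two short steps: since $|S| \geq 2^{30k} \geq 2^{30k-1}$, Lemma~\ref{lem:transitive subtournament} gives a transitive subtournament $R \subseteq S$ on $30k$ vertices, and then the K\H{o}v\'ari--S\'os--Tur\'an-type Lemma~\ref{prop:KST}, applied with $\alpha = \tfrac{1}{20}$ to the collection $\{N^+_A(v) : v \in R\}$, extracts $k$ vertices of $R$ whose common out-neighborhood in $A$ has size at least $(20e)^{-k}|A| \geq |A|/2^{6k}$; transitivity is inherited from $R$, so these $k$ vertices form an $A$-head. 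You instead run a dependent random choice argument from the $A$-side, in the spirit of Lemma~\ref{DRC for sets}: sampling $t=2k$ vertices of $A$, applying the density half of Lemma~\ref{lem:transitive subtournament} to the set $B$ of common dominators in $S$, and beating down the non-head transitive $k$-sets via the threshold bound $2^{-6kt}$, which succeeds precisely because $6 > \log_2 40$. This needs the extra bookkeeping you supply --- absorbing the realisations with $|B| < 2^{k-1}$, choosing $t$ in the right window, and treating $k=1$ separately --- and all of it checks out, as does your exact form of the averaging step, $\binom{|R|}{s}/\binom{|R|-k}{s-k} = \binom{|R|}{k}/\binom{s}{k}$ with $s = 2^{30k}$, which is the same double counting the paper sketches after Lemma~\ref{lem:transitive subtournament} (the degenerate case $A=\emptyset$, where your sampling is undefined, is vacuous, since then every transitive $k$-set is an $A$-head). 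What each approach buys: the paper's is shorter and purely deterministic because the first-moment/convexity work is packaged once in Lemma~\ref{prop:KST}, with the constants falling out of $20e \leq 2^6$; yours avoids Lemma~\ref{prop:KST} altogether and makes explicit that the same numerical slack ($\log_2 40 < 6$) is what reconciles the degree hypothesis $|A|/20$ with the constant $2^{6k}$ in the definition of an $A$-head. Both arguments yield identical conclusions and constants.
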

\begin{proof}
By Lemma \ref{lem:transitive subtournament}, $S$ contains a transitive tournament $R$ of size $30k$. For every vertex $v \in R$, define $S_v = N^{+}_A(v) \subseteq A$. Then $|S_v| \geq |A|/20$. Thus, we can apply Lemma \ref{prop:KST} to the collection $\{S_v: v \in R\}$ with $\alpha = \frac{1}{20}$, to find a subset $R' \subseteq R$ of size $k$ such that $N^{+}_A(R') \geq |A|/(20e)^k \geq |A|/2^{6k}$, hence $R'$ is an $A$-head.
\end{proof}
The following lemma is a version of dependent random choice (see, e.g., survey \cite{FS} for more information about this technique and its applications) which we will use several times in the paper.
\begin{lem}\label{DRC for sets}
Let $G$ be a bipartite graph on $(A,B)$ and $\gamma > 0$ be such that $e(A,B)=\gamma |A||B|$. Let $a = |A|,b = |B|$ and $m,k,s \in \mathbb{N}$ be such that 
\[
\binom{a\gamma^k}{k}>2s^k\max \left(\binom{a}{k}\left(\frac{m}{b}\right)^k,1\right).
\]
Then there exists a set $U\subseteq A$ with $|U|\geq s$, such that less than a fraction of $1/s^k$ of the sets of size $k$ in $U$ have less than $m$ common neighbors in $B$.
\end{lem}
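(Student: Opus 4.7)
This is the standard dependent random choice argument. I would sample $k$ vertices $t_1,\ldots,t_k$ uniformly and independently from $B$ (with replacement) and let
\[
U \;:=\; \{\, a\in A : t_i\in N(a)\ \text{for every }i\in[k]\,\}
\]
be their common neighbourhood in $A$. The key identity, used throughout, is that for every $S\subseteq A$ one has $\Pr[S\subseteq U] = (|N(S)\cap B|/b)^{k}$, since each of the $k$ independent samples must land in $N(S)$.

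From this I would estimate two quantities. First, by Jensen's inequality applied to $x\mapsto x^k$ one has $\mathbb{E}[|U|]=\sum_{a\in A}(d(a)/b)^k\geq a\gamma^k$; a further application of Jensen (using that $n\mapsto\binom{n}{k}$ on non-negative integers is convex, since its successive differences $\binom{n}{k-1}$ are non-decreasing) yields
\[
\mathbb{E}\!\left[\binom{|U|}{k}\right] \;=\; \sum_{S\in\binom{A}{k}}\!\!\left(\frac{|N(S)|}{b}\right)^{\!k} \;\geq\; \binom{\mathbb{E}[|U|]}{k} \;\geq\; \binom{a\gamma^{k}}{k}.
\]
Second, letting $W$ be the number of \emph{bad} $k$-subsets of $U$ --- those whose common neighbourhood in $B$ has fewer than $m$ vertices --- the trivial bound $|N(S)|<m$ on bad sets gives
\[
\mathbb{E}[W] \;=\; \sum_{S:\,|N(S)|<m}\!\!\left(\frac{|N(S)|}{b}\right)^{\!k} \;\leq\; \binom{a}{k}\!\left(\frac{m}{b}\right)^{\!k}.
\]

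Combining the two bounds with the hypothesis,
\[
\mathbb{E}\!\left[\binom{|U|}{k}-s^{k}W\right] \;\geq\; \binom{a\gamma^{k}}{k} - s^{k}\binom{a}{k}\!\left(\frac{m}{b}\right)^{\!k} \;>\; \tfrac12\binom{a\gamma^{k}}{k} \;>\; s^{k}.
\]
Hence there exists a realisation of $(t_1,\ldots,t_k)$ for which $\binom{|U|}{k} - s^{k}W > s^{k}$. This simultaneously forces $\binom{|U|}{k}>s^{k}$ (which implies $|U|\geq s$, since $\binom{s-1}{k}\leq(s-1)^{k}/k!<s^{k}$ for every $k\geq 1$) and $W<\binom{|U|}{k}/s^{k}$, which is exactly the required statement that fewer than a $1/s^{k}$-fraction of the $k$-subsets of $U$ have fewer than $m$ common neighbours in $B$.

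The only delicate point is the Jensen step producing $\mathbb{E}[\binom{|U|}{k}]\geq\binom{a\gamma^{k}}{k}$: one has to stay in the regime where the real-polynomial version of $\binom{x}{k}$ is convex, i.e.\ $a\gamma^{k}\geq k-1$. This is automatic from the hypothesis, which already forces $\binom{a\gamma^{k}}{k}>2s^{k}\geq 2$ and therefore $a\gamma^{k}\geq k$. All remaining manipulations are linearity of expectation.
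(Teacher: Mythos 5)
Your proposal is correct and is essentially identical to the paper's proof: the same dependent random choice experiment (sampling $k$ vertices of $B$ with repetition and taking their common neighbourhood $U$), the same two Jensen/first-moment bounds on $\mathbb{E}\left[\binom{|U|}{k}\right]$ and on the expected number of bad $k$-sets, and the same linearity-of-expectation combination to extract a single good realisation. Your extra remark about the convexity regime of $\binom{x}{k}$ is a fine (and valid) refinement of a point the paper leaves implicit.
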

\begin{proof}
Let $S$ be a subset of $k$ random vertices, chosen uniformly from $B$ with repetition. Let $U$ denote the set of common neighbors of $S$ in $A$ and $X = |U|$. Let $Y$ denote the number of $k$-subsets of $U$ with less than $m$ common neighbors in $B$. Note that,
$\mathbb{E}[X] \geq \sum_{v\in A} \left(\frac{d(v)}{|B|}\right)^k\geq \gamma^k a$, where the second inequality follows from Jensen's inequality; applying the same inequality also gives $\mathbb{E} \left[{X \choose k} \right] \geq {\mathbb{E}[X] \choose k}$. 
Moreover, we have
$\mathbb{E}[Y] \leq {|A| \choose k} \left(\frac{m}{|B|} \right)^k$.
Now, note that
\begin{equation}\label{comparing expectation}
\mathbb{E} \left[{X \choose k} \right]> s^k (E[Y]+1)
\end{equation}
implies, by linearity of expectation, that there is a choice of $U$ such that $Y$ is at most a $1/s^k$ fraction of the total number of $k$-sets in $U$, and furthermore $X=|U|>s$. Since (\ref{comparing expectation}) follows from the previous bounds and the assumption of the lemma, we are done.
\end{proof}
\subsection{Chain partitioning and linking}\label{technical lemmas}
We will now prove two important lemmas which will be used throughout our proofs. The first lemma is used to partition the vertex set of a tournament into a small number of chains and a small set of remaining vertices.
\begin{lem}\label{lem:partition lemma}
Let $T$ be a tournament and $S,A,B\subseteq V(T)$ be such that every $v\in S$ has $|N^-_A(v)|\geq |A|/20$ and $|N_B^+(v)|\geq |B|/20$. Then we can partition $S$ into at most $2^{200k}\log |S|$ vertex disjoint $(A,B,k)$-chains and a set of remaining vertices of size at most $2^{200k}$.
\end{lem}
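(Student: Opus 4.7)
The plan is to iteratively carve off $(A,B,k)$-chains from $S$ until only a small leftover remains. Set $S_0 = S$, and at step $i$, while $|S_i| \geq 2^{200k}$, produce a chain $C_i \subseteq S_i$ of size $|C_i| \geq |S_i|/2^{199k}$ and pass to $S_{i+1} = S_i \setminus V(C_i)$. Since $|S_{i+1}| \leq |S_i|(1 - 2^{-199k})$, the procedure halts after at most $2^{200k}\log|S|$ steps, leaving at most $2^{200k}$ uncovered vertices, which yields the advertised partition.

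The core of the argument is the single chain-extraction step: given $S' = S_i$ with $|S'| \geq 2^{200k}$, find a $(A,B,k)$-chain in $T[S']$ of length at least $|S'|/2^{199k}$. For this, apply Theorem \ref{thm:long paths} to $T[S']$ to obtain a $k$-th power of a path $P = v_0 v_1 \ldots v_{\ell-1}$ with $\ell \geq |S'|/2^{8k}$. Let $M = 2^{100k}$ and consider the initial and final windows of $M$ vertices of $P$. The goal is to locate indices $i^* \leq M$ and $j^* \geq \ell - M$ such that $\{v_{i^*}, \ldots, v_{i^*+k-1}\}$ is an $A$-tail and $\{v_{j^*}, \ldots, v_{j^*+k-1}\}$ is a $B$-head; the sub-path $v_{i^*} v_{i^*+1}\ldots v_{j^*+k-1}$ is then a chain of length at least $\ell - 2M \geq |S'|/2^{199k}$.

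To produce these indices we invoke Lemma \ref{inclusion-exlusion}: since every vertex of $S'$ has $|N_A^-(v)| \geq |A|/20$ and $|N_B^+(v)| \geq |B|/20$, a positive fraction of $k$-subsets of the initial (respectively final) window are $A$-tails (respectively $B$-heads). The principal technical obstacle is bridging the gap between the \emph{$k$-subset} existence supplied by Lemma \ref{inclusion-exlusion} and the \emph{consecutive $k$-tuple} required by the chain definition: an $A$-tail returned by that lemma need not occupy $k$ consecutive positions of $P$. Overcoming this requires a local surgery on $P$: given an $A$-tail $T_0 = \{w_1, \ldots, w_k\}$ in the initial window, we drop the intervening vertices of $P$ in that window and re-prepend $T_0$ so that the resulting ordering is still a $k$-th power of a path on the surviving vertex set (and symmetrically for the $B$-head at the other end). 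Making this surgery rigorous---using the density information of $T_0$ together with the rigid combinatorial structure of the tail of $P$---is the technical heart of the proof, and is where the density assumption on $A$ (as opposed to the mere transitivity of $T_0$) is essentially used.
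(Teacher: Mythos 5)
Your outer iteration (repeatedly extracting one chain covering a $2^{-199k}$-fraction of the current set, which bounds the number of chains by $2^{200k}\log|S|$ and the leftover by $2^{200k}$) matches the paper and is fine. The problem is the single chain-extraction step, and the gap you yourself flag as the ``technical heart'' is genuine and, as set up, not repairable. You apply Theorem~\ref{thm:long paths} to get only a $k$-th power of a path $P$ and then select an $A$-tail $T_0$ via Lemma~\ref{inclusion-exlusion} from a window of $M=2^{100k}$ vertices (the size $2^{30k}$ hypothesis of that lemma forces such a large window). But in a mere $k$-th power, two selected vertices at distance more than $k$ in $P$ need not be joined by a forward edge, and the transitive order of $T_0$ need not agree with the path order; moreover, after you delete the intervening window vertices, the last vertex of $T_0$ may sit at distance up to $2^{100k}$ from the first surviving vertex beyond the window, so the edges required for the new ordering to be a $k$-th power of a path (each of the first $k$ vertices dominating the next $k$ surviving vertices) simply may not exist. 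No ``local surgery'' using only the density of $T_0$ in $A$ can manufacture these internal edges of $T[S']$.

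The paper's proof avoids this exact trap by asking Theorem~\ref{thm:long paths} for much more: it extracts a $20k$-th power of a path $P^{20k}_t$ with $t=|S'|/2^{160k}$, and works only inside the first and last $20k$ vertices. Within a block of $20k$ consecutive vertices of a $20k$-th power, every pair is joined by a forward edge, so every $k$-subset is automatically transitive and ordered consistently with the path; the $A$-tail (resp.\ $B$-head) is then found by the K\H{o}v\'ari--S\'os--Tur\'an-type Lemma~\ref{prop:KST} applied to the $20k$ in-neighborhoods (resp.\ out-neighborhoods), with no need for Lemma~\ref{inclusion-exlusion} and its $2^{30k}$-vertex requirement. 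Crucially, after deleting the other $19k$ vertices of the first (and last) block, every surviving vertex is still within distance $20k$ in the original ordering of the next $k$ surviving vertices, so the remainder is still a $k$-th power of a path whose first $k$ vertices are the $A$-tail and last $k$ the $B$-head. To fix your argument you would have to import precisely this idea: take a $Ck$-th power for suitable $C$ and shrink the windows to $O(k)$ vertices, rather than performing surgery on a bare $k$-th power with windows of size $2^{100k}$.
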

\begin{proof}
Let $|S|=s$ and assume that $s>2^{200k}$, as otherwise we are done. Using Theorem \ref{thm:long paths}, we can find a copy of $P_t^{20k}$ with $t= s/2^{160k}>41k$. Look at the set of the first $20k$ vertices in this path; since every vertex has at least $|A|/20$ in-neighbors in $A$, we conclude, by Lemma \ref{prop:KST}, that there exists a set of $k$ vertices among the first $20 k$ vertices in the found copy of $P_t^{20k}$, such that their common in-neighborhood in $A$ is at least of size $\left(\frac{1}{20e}\right)^k |A|>\frac{1}{2^{6k}}|A|$. 
Since this $k$-set is also transitive by construction, it forms an $A$-tail. Now we remove the other $19k$ vertices from the beginning of the path. Similarly, among the last $20k$ vertices we find a $B$-head of size $k$ and we remove the other $19k$ vertices.  Note that the obtained structure $P$ is a $k$-th power of a path starting with an $A$-tail and ending with a $B$-head, or in other words, it is an $(A,B,k)$-chain of length $t-38k$.

We repeat this procedure, now only using vertices in $S\setminus V(P)$. Let $s'=|S\setminus V(P)|$, so we get a new $(A,B,k)$-chain of length $\frac{s'}{
2^{160k}}-38k>\frac{s'}{
2^{161k}}$, if $s'>2^{200k}$. We continue doing this until at most $2^{200k}$ vertices are left, which enables us to find $20k$-th powers of paths of length at least $\frac{2^{200k}}{2^{160k}}>41k$ in every step, and turn them into $(A,B,k)$-chains. 
Notice that after finding each $(A,B,k)$-chain, the number of uncovered vertices is at most $1-\frac{1}{2^{161k}}$ times the number of uncovered vertices in the previous step; this means that in the end we find at most 
\[
-\log_{1-\frac{1}{2^{161k}}}s=-\frac{\log s}{\log (1-\frac{1}{2^{161k}})}<2^{200k}\log s
\]
disjoint $(A,B,k)$-chains when we stop, which finishes the proof.
\end{proof}
The next result is one of the main tools in our proofs; it is used to link vertices from the end of one chain to the beginning of another chain.
\begin{lem} [The linking lemma]\label{lem:linking}
Let $\delta > 0$, $k\geq 2$ and $n>\left(\frac{2}{\delta}\right)^{1000k}$.  Let $T$ be an $n$-vertex $\delta$-cut-dense tournament with $\delta^0(T)>(\frac{1}{4}-\frac{\delta}{100})n$.
Suppose $M,N\subseteq V(T)$ are of size $k$, and $M$ has at least $\frac{n}{2^{20k}}$ common out-neighbors, while $N$ has at least $\frac{n}{2^{20k}}$ common in-neighbors. Let $\pi$ be an ordering of $V(T)\setminus (M\cup N)$. Then, there is a $k$-th power of a path $P$ starting in $M$ and ending in $N$, so that the vertices of $P\setminus(M\cup N)$ can be partitioned into a collection of sets $\mathcal{S}=\{S_1,\ldots, S_m\}$ which is $10k$-apart in $\pi,$ while $|S_i| = k$ and $m\leq \log n$. 
\end{lem}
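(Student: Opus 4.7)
The approach is to build $P$ via a two-sided BFS in the chain structure: grow a \emph{forward-reachable} set $A$ from $M$ (using heads) and a \emph{backward-reachable} set $B'$ toward $N$ (using tails), and then join them by a dichotomy on $|A\cap B'|$. Throughout, every candidate blob is chosen from the complement of the $\pi$-forbidden region $I_{10k}\!\left(\bigcup_{j} S_j,\pi\right)$ around previously placed blobs. Since at most $\log n$ blobs of size $k$ will be chosen, this forbidden region has size $O(k^2\log n)$, a negligible fraction of $n$ under the hypothesis $n>(2/\delta)^{1000k}$, so it will not disturb any of the density arguments.

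Set $A_0 = N^+(M)$, which has size $\geq n/2^{20k}$ by hypothesis, and define iteratively $A_{i+1} = A_i \cup \bigcup_B N^+(B)$, where $B$ ranges over transitive $k$-subsets of $A_i$ with $|N^+(B)| \geq n/2^{6k}$, i.e.\ heads; Lemma~\ref{inclusion-exlusion} guarantees such a head inside any subset of size $\geq 2^{30k}$ whose elements have out-degree $\geq n/20$ in $V(T)$, which is always the case here. Define $B'$ symmetrically from $N^-(N)$ using tails. The key observation is that Lemma~\ref{inclusion-exlusion} also forces $v\in \bar A_i$ to be absorbed into $A_{i+1}$ whenever $|N^-(v)\cap A_i|\geq 2^{30k}$. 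Combining this with the cut-density estimate $\overrightarrow e(A_i,\bar A_i)\geq (\delta/16)|A_i||\bar A_i|$ from Lemma~\ref{lem:anycutisdense} (whose hypothesis $\delta^0(T)\geq n/4-\delta n/16$ is easily implied by our $\delta^0(T)>(1/4-\delta/100)n$) yields $|A_{i+1}\setminus A_i| = \Omega(\delta\,|\bar A_i|)$ as long as $|A_i|\gg 2^{30k}/\delta$. Consequently $|\bar A_i|$ decays geometrically, and after $O(\log n)$ rounds we get $|A|,|B'|\geq n/2-\delta n/100$.

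\textbf{Bridging.} If $|A\cap B'|\geq \delta n/50$, Lemma~\ref{lem:transitive subtournament} produces a transitive $k$-subset $T\subseteq A\cap B'$. Backtracking through the BFS trees we pick an $M$-side chain whose last blob dominates $T$ and an $N$-side chain whose first blob is dominated by $T$, and concatenate via $T$. If instead $|A\cap B'|$ is small, then $A$ and $B'$ essentially partition $V$, and Lemma~\ref{lem:anycutisdense} gives $\overrightarrow e(A,B')=\Omega(\delta n^2)$. Lemma~\ref{DRC for sets} applied to this dense bipartite graph produces a transitive $k$-subset $T_A\subseteq A$ whose common out-neighborhood in $B'$ has size $\gg 2^{30k}$; then Lemma~\ref{inclusion-exlusion} inside $N^+(T_A)\cap B'$ yields a transitive $k$-subset $T_B$ with $T_A\to T_B$, producing a two-blob bridge, and we finish as before by back-tracking through the two BFSs.

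\textbf{Main obstacle.} The delicate point is maintaining the $m\leq \log n$ bound on the number of blobs while simultaneously preserving the $10k$-apart property in $\pi$: the geometric growth of $A_i,B'_i$ must survive the restriction of all candidate blobs to the complement of the $\pi$-forbidden region at every step, and the head/tail thresholds $n/2^{6k}$ and $2^{30k}$ must remain much smaller than the incremental gain in $|\bar A_i|$ at every stage. The assumption $n>(2/\delta)^{1000k}$ is calibrated exactly to absorb these exponential-in-$k$ constants as well as the $\delta/16$-versus-$\delta/100$ slack between the cut-density and semidegree conditions, so that all of the DRC and inclusion-exclusion applications go through uniformly.
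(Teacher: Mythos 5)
Your high-level skeleton (two-sided reachability from $M$ and towards $N$, then a dichotomy on the size of the intersection versus a cut-density argument) matches the paper's, but the way you define the reachable sets creates two genuine gaps. First, the backtracking step is not justified. In your BFS the only information recorded about a vertex $v\in A_{i+1}\setminus A_i$ is that \emph{some} head $B\subseteq A_i$ dominates it (equivalently, $|N^-(v)\cap A_i|\geq 2^{30k}$). When you later pick a transitive $k$-set $T\subseteq A\cap B'$ (or $T_A\subseteq A$ in the sparse-intersection case), its $k$ vertices were typically absorbed by $k$ different heads at different stages, and nothing guarantees the existence of a single transitive $k$-set in an earlier layer dominating all of $T$, nor that $T$ has a large common in-neighborhood anywhere closer to $M$. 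So the claimed ``$M$-side chain whose last blob dominates $T$'' need not exist, and the $k$-blowup of a path from $M$ to $T$ cannot be assembled. The paper avoids this by defining the layers with a \emph{fractional} threshold: $M_i$ consists of vertices with at least $c|M^{i-1}|$ in-neighbors in $M^{i-1}$ (with $c=\delta/100$), which forces every vertex of $M_i$ to have in-degree at least $c|M_{i-1}|$ into the \emph{previous layer itself}; this density is exactly what feeds the dependent-random-choice step (Claim~\ref{Case 1}) that produces each successive blob together with at least $n^{1/4}$ common in-neighbors in the next layer back, so the walk back to $M$ goes through. With your absolute threshold $2^{30k}$, the relevant bipartite density between a layer and its predecessor can be as small as $2^{30k}/|A_i|$, and Lemma~\ref{DRC for sets} gives nothing for $k$-sets at that density.

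Second, the bound $m\leq\log n$ is not established. Your growth estimate only gives $|A_{i+1}\setminus A_i|=\Omega(\delta|\bar A_i|)$, i.e.\ a decay factor $1-\Omega(\delta)$ per round, so reaching $|A|\approx n/2$ may take $\Theta(1/\delta)$ rounds; since the lemma must hold for $n$ as small as $(2/\delta)^{1000k}$, where $\log n=\Theta(k\log(1/\delta))$, the depth of your BFS (and hence the number of blobs along a backtracking chain) can exceed $\log n$ for small $\delta$. The paper instead bounds the number of layers by $O(\log(1/\delta))$: summing out-degrees over $M^{i-1}$ against the semidegree hypothesis shows each new layer has size at least $\delta n/32$ until $|M^{i}|\geq(\frac12-\frac{\delta}{8})n$, so $i_M,i_N\leq\log_2(100/\delta)$ and the total number of blobs is far below $\log n$. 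Your bookkeeping of the forbidden region $I_{10k}(\cdot,\pi)$ is fine and mirrors the paper's, but without the fractional layer definition both the construction of the linking path and the blob count fail.
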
 
\begin{proof}
We let $c=\delta/100$ and we start with defining the set $M_1$ to be the set of common out-neighbors of $M$ and the set $N_1$ to be the set of common in-neighbors of $N$; hence $|M_1|, |N_1| \geq \frac{n}{2^{20k}}$. Next, we define the sets $M_i$ and $N_i$ for $i\geq 2$ as follows. We first let $M^i=\bigcup_{j\leq i} M_j$ and $N^i=\bigcup_{j\leq i}N_j$ for each $i\geq1$, and let:
\[
M_i = \{x \notin M \cup N\cup M^{i-1}\mid d^-_{M^{i-1}}(x)\geq c|M^{i-1}|\}\enspace\text{and}\enspace N_i = \{x \notin M\cup N \cup N^{i-1} \mid d^{+}_{N^{i-1}}(x) \geq c|N^{i-1}|\},
\]
i.e. $M_i$ is the set of vertices outside of $M\cup N\cup M^{i-1}$ with many in-neighbors in $M^{i-1}$, and $N_i$ is the set of vertices outside of $M\cup N\cup N^{i-1}$ with many out-neighbors in $N^{i-1}$. Furthermore, by bounding $\sum_{x \in M^{i-1}} d^{+}(x)$ and using that vertices outside $M\cup N\cup M^i$ have in-degree at most $c|M^{i-1}|$ in $M^{i-1}$ we get:
$$\left( \frac{1}{4} - c\right)n|M^{i-1}| \leq \sum_{x \in M^{i-1}} d^{+}(x) \leq c|M^{i-1}|n + |M^{i-1}|(|M|+|N|+|M_{i}|) + \frac{|M^{i-1}|^2}{2},$$
which gives: 
\begin{equation}\label{eq:doubleCount}
|M_{i}| \geq n \left(\frac{1}{4} - 2c \right) - 2k - \frac{|M^{i-1}|}{2}
\end{equation}
and by adding $|M^{i-1}|$ to both sides: $|M^{i}| \geq n \left(\frac{1}{4} - 2c \right) - 2k + \frac{|M^{i-1}|}{2}$. From this recursive relation we get that:
$$|M^{i}| \geq n \left(1 - \frac{1}{2^{i-1}} \right) \left(\frac{1}{2} - 4c \right) + \frac{n/2^{20k}}{2^{i-1}} - 4k.$$
Define $i_M$ to be minimal such that $|M^{i_M}| \geq n \left(\frac{1}{2} - \frac{\delta}{8} \right)$ and note that $i_M \leq \log_2 \left(\frac{100}{\delta} \right)$.
Also, note that the recursive relation (\ref{eq:doubleCount}) and the fact that $|M^{i-1}|\leq n \left(\frac{1}{2} - \frac{\delta}{8} \right)$ for $i \leq i_M$ gives for $2 \leq i \leq i_M$
\begin{equation}\label{eq:Bound on M_i}
\left|M_{i} \right| \geq n \left(\frac{\delta}{16} - 2c\right) - 2k \geq \frac{\delta n}{32}.
\end{equation}
Note also that every vertex $x \in M_{i}$ has $d_ {M_{i-1}}^{-}(x)  \geq c|M_{i-1}|$; this is because $x \notin M_{i-1}$ and so, we have both $d^{-}_ {M^{i-1}}(x)  \geq c|M^{i-1}|$ and $d_{M^{i-2}}^{-}(x) < c|M^{i-2}|$ which implies the desired inequality since $M_{i-1} = M^{i-1} - M^{i-2}$. We can define $i_N$ analogously to $i_M$, and all bounds which hold for $M_i,M^i,i_M$ hold also for $N_i$, $N^i$ and $i_N$.

\noindent \textbf{Case 1: there exist $i_1 \leq i_M$ and $i_2 \leq i_N$ such that $|M_{i_1} \cap N_{i_2}| > n^{1/4}$}.

Let $i_1+i_2$ be minimal so that the above inequality holds. Let $S = M_{i_1}\cap N_{i_2}$ and let $t=i_1i_2 \cdot n^{1/4} < n^{0.3}$, so that $|M^{i_1-1}\cap N^{i_2}|, |M^{i_1}\cap N^{i_2-1}| \leq t$. Now, for all $i < i_1$ define the set $K_i:=M_i\setminus N^{i_2}$ and for all $i < i_2$, define $L_i:=N_i\setminus M^{i_1}$. By the inequality (\ref{eq:Bound on M_i}), we have that $|L_i|,|K_i|>\delta n/32-t>\delta n/64$ for $i \geq 2$ and 
$|L_1|,|K_1|> \frac{n}{2^{20k}}-t>\frac{n}{2^{21k}}$. Furthermore, note that from the discussion after (\ref{eq:Bound on M_i}), we also have that for every $x\in S$,
\begin{equation}\label{eq:density1}
d_{K_{i_1-1} }^{-}(x)  \geq c \left|M_{i_1-1} \right| - t \geq \frac{c}{2} \left|K_{i_1-1} \right|
\end{equation}
and similarly for every $i<i_1$ and $x\in K_i$ it holds $d_{K_{i-1}}^-(x)\geq \frac{c}{2}|K_{i-1}| $. The analogous bounds hold for each $L_i$.

Now we will find the $k$-th power of a path which connects $M$ and $N$, and which satisfies the conditions of the lemma; in order to do this we find sets $Z,X_1,\ldots,X_{i_1-1}, Y_1,\ldots, Y_{i_2-1}$  of size $k$, such that all of them induce transitive tournaments, with $Z\subset S$, $X_i\subset K_{i_1-i}$ and $Y_i\subset L_{i_2-i}$, and where 
$X_{i_1-1},\ldots X_1,Z,Y_1,\ldots Y_{i_2-1}$ induces a $k$-blowup of a path, where the listed sets are the blobs of the blowup in this order. Furthermore, the collection of blobs is $10k$-apart in $\pi$. Since $X_{i_1-1}$ is in the common out-neighborhood of $M$, and $Y_{i_2-1}$ is in the common in-neighborhood of $N$, this will give the $k$-th power of the statement, where $\mathcal{S}=\{X_{i_1-1},\ldots X_1,Z,Y_1,\ldots Y_{i_2-1}\}$.

Set $F:=\emptyset$ in the beginning and as we find a new blob we add some \emph{forbidden} vertices to $F$, which we are not allowed to use in subsequent blobs. Every time we find a new blob we will add at most $30k^2$ vertices to $F$, so at any point $F$ is of size at most 
$(i_1+i_2)30k^2<60k^2 \log_2(100/\delta)<k\log_2 n$ (using that $n>(2/\delta)^{1000k}$).
We begin by finding the set $Z\subset S$.
Look at the bipartite graph $H$ on $(S, K_{i_1-1}\times L_{i_2-1})$ where there is an edge between $s$ and $(x,y)$ if and only if $xs$ and $sy$ are edges in $T$. Every vertex in $S$ has at least $\frac{c^2}{4}|K_{i_1-1}\times L_{i_2-1}|$ neighbors by (\ref{eq:density1}) and thus, $H$ has at least $\frac{c^2}{4}|S||K_{i_1-1}\times L_{i_2-1}|$ edges. Note the following direct application of Lemma \ref{DRC for sets}.
\begin{claim}\label{Case 1}
Let $G$ be a bipartite graph with parts $A,B$ of sizes larger than $n^{1/5}$ and such that $e(A,B) \geq \frac{c^2}{4} |A||B|$. Then, there exists a set $U\subseteq A$ of size at least $2^{2k}$ such that at most a fraction of $1/2^{2k^2}$ of its $k$-sets have less than $|B|^{2/3}$ common neighbors in $B$.
\end{claim}
\begin{proof}
Take the parameters of Lemma \ref{DRC for sets} to be $a=|A| \geq n^{1/5}$, $b=|B| \geq n^{1/5}$, $\gamma=\frac{c^2}{4}$, $m=b^{2/3}$,$s=2^{2k}$. Indeed, it holds that
\[\binom{a\gamma^k}{k}\geq \frac{a^k\gamma^{k^2}}{k^k}\geq 2s^k\frac{a^ke^k }{k^k}\cdot \left(\frac{c^{2k}}{2\cdot 4^kse}\right)^k\geq 2s^k\binom{a}{k}b^{-k/3} = 2s^k\binom{a}{k}\left(\frac{m}{b}\right)^k
\]
where the last inequality holds since $b\geq n^{1/5} \geq 
\frac{(10 s 4^k)^3}{c^{6k}}$, where we used that $n>\left(\frac{2}{\delta}\right)^{1000k}$ and $c=\delta/100$. Further, we are done by noting that
\[
\binom{a\gamma^k}{k}\geq\left( \frac{a\gamma^{k}}{k}\right)^k\geq n^{k/10}\geq 2s^k .
\]
\end{proof}
Applied to the graph $H$, this claim implies the existence of a subset $U \subseteq S$ of size at least $2^{2k}$ such that at least a $(1-1/2^{2k^2})$-fraction of its $k$-sets have at least $|K_{i_1-1}\times L_{i_2-1}|^{2/3} / |L_{i_2-1}|>n^{1/4}$ in-neighbors in $K_{i_1-1}$ and at least $n^{1/4}$ out-neighbors in $L_{i_2-1}$. Since $U$ is of size at least $2^{2k}$ and the fraction of its $k$-sets which induce transitive tournaments is, by Lemma \ref{lem:transitive subtournament}, larger than $1/2^{k^2}$, this means that there is a transitive $k$-set $Z$ in $S$ with at least $n^{1/4}$ common in-neighbors in $K_{i_1-1}$ and at least $n^{1/4}$ common out-neighbors in $L_{i_2-1}$.
We now update $F$ by setting $F:=I_{10k}(Z,\pi)$, i.e. $F$ is the set of vertices which are at distance at most $10k$ in $\pi$ from at least one vertex in $Z$. Hence, $F$ is of size at most $|F|\leq k(20k+1)\leq 30k^2$.
 
We continue by now finding the set $X_1\subseteq K_{i_1-1}\setminus F$ among the at least $n^{1/4}-|F|>n^{1/5}$ vertices which are common in-neighbors of $Z$. Crucially, notice that every vertex in 
$K_{i_1-1}$ has at least $\frac{c}{2}|K_{i_1-2}|-|F|>\frac{c}{4}|K_{i_1-2}|$ in-neighbors in $K_{i_1-2}\setminus F$. Therefore, the bipartite graph induced by $\overrightarrow{E}\big[K_{i_1-2}\setminus F,N_{K_{i_1-1}}^-(Z)\setminus F\big]$ has density at least $\frac{c}{4}$ and parts of sizes larger than $n^{1/5}$. Thus, we can apply Claim \ref{Case 1} again to find a transitive $k$-set $X_1 \subseteq N_{K_{i_1-1}}^-(Z)\setminus F$ which has at least $|K_{i_1-2}\setminus F|^{2/3} > n^{1/4}$ common in-neighbors in $K_{i_1-2}\setminus F$. 
We update $F$ accordingly, i.e. $F:=F\cup I_{10k}(X_1,\pi)$, and so, $F$ increases by at most $30k^2$.  
 
We repeat this process to find $X_2,\ldots,X_{i_1-1}$. Suppose we have found the sets $X_1,\ldots,X_{i-1}$ and we know that $X_{i-1}$ has more than $n^{1/5}$ common in-neighbors in $K_{i_1-i}\setminus F$; in order to find a $k$-set $X_i$ among those neighbors, such that $X_i$ has at least $n^{1/4}$ common in-neighbors in $K_{i_1-i-1}\setminus F$, we consider the bipartite graph induced by $E\big[K_{i_1-i-1}\setminus F, N_{K_{i_1-i}}^-(X_{i-1})\setminus F\big]$ and apply Claim \ref{Case 1}. Indeed, at any point we will have that $|F|<(i_1+i_2)30k^2<k \log_2 n$. So, by the bound below inequality (\ref{eq:density1}), every vertex in $K_{i_1-i}$ has at least $\frac{c}{2}|K_{i_1-i-1}|-|F|>\frac{c}{4}|K_{i_1-i-1}|$ in-neighbors in $K_{i_1-i-1}\setminus F$ and thus, the bipartite graph will have density at least $\frac{c}{4}$. To finish the step, we update $F$ accordingly, by adding to it the vertices in $I_{10k}(X_i,\pi)$. The same process is done to find the sets $Y_i$ for $1\leq i< i_2$, although we now start with the set $F$ of forbidden vertices we already have. 

This finishes the first case analysis. Now, suppose that $|M_{i} \cap N_{j}| \leq n^{1/4}$ for all $i\leq i_M$ and $j\leq i_N$; this implies that $|M^{i_M} \cap N^{i_N}|\leq i_Mi_N \cdot n^{1/4}<n^{0.3}$.

\noindent \textbf{Case 2: $|M^{i_M} \cap N^{i_N}| < n^{0.3}$}

Similarly to before, we define the sets
$K_i=M_i\setminus N^{i_N}$ for all $i\leq i_M$ and $L_i=N_i\setminus M^{i_M}$ for all $i\leq i_N$. Additionally, we define the sets $K^i=\bigcup_{j\leq i}K_j$ and $L^i=\bigcup_{j\leq i}L_i$. Note that by (\ref{eq:Bound on M_i}) we have $|K_i|>|M_i|-n^{0.3}>\delta n/64$ and similarly, 
$|L_i|>\delta n/64$. Furthermore, we have the bounds $|K^{i_M}|,|L^{i_N}|> n \left(\frac{1}{2} - \frac{\delta}{8} \right) - n^{0.3}> n \left(\frac{1}{2} - \frac{\delta}{7} \right)$.
Using the fact that $T$ is $\delta$-cut-dense from the statement of the lemma, we get that 
$$
\overrightarrow{e}(K^{i_M},L^{i_N})\geq \frac{\delta n^2}{4}- 2\frac{\delta n}{7} \cdot \frac{n}{2} \geq \frac{\delta n^2}{10} \geq \frac{\delta}{3} |K^{i_M}||L^{i_N}|
$$
and in particular, there exists a pair $\left( K_{i_1}, L_{i_2}\right)$ such that the density of edges going from $K_{i_1}$ to $L_{i_2}$ is at least
$\delta/3>c$, i.e. 
\begin{equation}\label{density KL}
    \overrightarrow{e}(K_{i_1},L_{i_2})\geq c |K_{i_1}||L_{i_2}|.
\end{equation}
 Also note that $d^-_{K_i}(x)>c|M_i|-n^{0.3}>\frac{c}{2}|K_i|$ for every $x\in K_{i+1}$ and $i<i_1$, and likewise we have $d^+_{L_i}(x)>\frac{c}{2}|L_i|$ for $i<i_2$ and $x\in L_{i+1}$. 

Similarly to before, we find sets $X_{i_1-1},\ldots,X_1,Z,Y_1,\ldots,Y_{i_2}$  which form a $k$-blowup of a path in this order, with the same properties as before, but now with $X_i\subseteq K_{i_1-i}$, $Z \subseteq K_{i_1}$ and $Y_i\subseteq L_{i_2-i+1}$ for all $i$.
We first set our set of forbidden vertices $F$ to be empty again.

We will only show how to find the set $Z$, as when we have it, the linking proceeds in the same way as before. So, take the bipartite graph $H$ with parts $K_{i_1}$ and $K_{i_1-1}\times L_{i_2}$, which have both size larger than $n^{1/5}$, where there is an edge between $x$ and $(u,v)$ if $ux$ and $xv$ are edges in $T$. Notice that the number of edges in this bipartite graph is $$\sum_{x\in K_{i_1}} |N^-(x)\cap K_{i_1-1}|\times  |N^+(x)\cap L_{i_2}|\geq \frac{c}{2}|K_{i_1-1}|\sum_{x\in K_{i_1}}|N^+(x)\cap L_{i_2}|>\frac{c^2}{2}|K_{i_1}||K_{i_1-1}||L_{i_2}|$$ 
where the last inequality follows from (\ref{density  KL}) together with 
$\sum_{x\in K_{i_1}}|N^+(x)\cap L_{i_2}|=\overrightarrow{e}(K_{i_1},L_{i_2})$.
We can then apply Claim \ref{Case 1} to $H$, and as before, get the existence of a transitive $k$-set $Z \subseteq K_{i_1}$ which has at least $n^{1/4}$ common in-neighbors in $K_{i_i-1}$ and at least $n^{1/4}$ common out-neighbors in $L_{i_2}$. We update $F$ by setting $F:=F\cup I_{10k}(Z,\pi)$.
We continue to find the remaining sets $X_i,Y_i$ in the same way as in the first case, giving us the desired $k$-th power of a path linking $M$ to $N$.
\end{proof}

\section{Cut-dense tournaments}\label{cutdense}
In this section we show that if a tournament is cut-dense, then a minimum semidegree slightly below $n/4$ is already enough to guarantee the existence of the $k$-th power of a Hamilton cycle.
\Cutresult*
\begin{proof}

Firstly, note that we can use Lemma \ref{lem:partition lemma} (with $A=B=V(T)$) to partition the vertices of $T$ into  $t \leq 2^{600k}\log n$ disjoint $3k$-chains $C_1,C_2,\ldots, C_t$ and  a set $R$ of size at most $2^{600k}$. In the next subsection, we cover $R$ with disjoint $k$-chains of length $2k+1$. Some of the vertices in these $k$-chains might be contained in one of $C_1,C_2,\ldots,C_t$; in this case, we remove each such vertex from the corresponding $C_i$. In the end, we will denote by $C_i'$ what is left of $C_i$, for all $i\geq 1$. The covering of $R$ is done so that each $C_i'$ has the same first and last $k$ vertices as $C_i$.

\subsection{Covering $R$ with $k$-chains}
Let $F$ be the set of vertices in $T$ which are either in $R$ or in the first or last $k$ vertices in one of the $3k$-chains $C_1,C_2,\ldots C_t$, implying that $|F|<2^{600k}+2k\cdot 2^{600k}\log n$. Let $\pi$ be the ordering on $V=V(T)\setminus F$ such that all vertices in each $C_i$ come before all vertices in $C_{i+1}$, and inside of each $C_i$ the vertices inherit the ordering of the chain. For every vertex $v\in R$, let $A_v=N^-(v)\cap V$ and let $B_v=N^+(v)\cap V$.
We have that $|A_v|+|B_v|=n-|F|$, and $|A_v|,|B_v|\geq \delta^0(T)-|F|>\frac{n}{5}.$

We repeat the following procedure until all vertices in $R$ are covered with disjoint $k$-chains of length $2k+1$. Let $w_1\in R$ be an uncovered vertex. By Lemma \ref{lem:anycutisdense} applied to $T$, we get that 
\begin{equation}\label{eq:A_w}
    \overrightarrow{e}(A_{w_1},B_{w_1})>\frac{\delta}{16}|A_{w_1}||B_{w_1}|-|F|n>\frac{\delta}{32}|A_{w_1}||B_{w_1}|
\end{equation}

By using Lemma \ref{DRC for sets}, we obtain the following claim. 
\begin{claim}\label{cl:absorption}
There exist a set $U\subseteq A_{w_1}$ of size $|U|\geq 2^{30k}$ such that at most a $\frac{1}{ 2^{30k^2}}$-fraction of its $k$-sets has less than $2^{30k}$ out-neighbors in $B_{w_1}$.
\end{claim}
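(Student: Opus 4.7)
The plan is to apply Lemma \ref{DRC for sets} directly to the bipartite graph $G$ with parts $A_{w_1}$ and $B_{w_1}$ whose edge set is $\overrightarrow{E}(A_{w_1},B_{w_1})$; under this identification, ``common neighbors in $B_{w_1}$ of a $k$-set $U'\subseteq A_{w_1}$'' are exactly common out-neighbors of $U'$ in $B_{w_1}$. First I would choose the parameters $a=|A_{w_1}|$ and $b=|B_{w_1}|$ (both in the range $[n/5,\,n]$ by the bounds recorded just before the claim), $\gamma = \delta/32$ (a valid lower bound on the edge density of $G$ by (\ref{eq:A_w})), and $s=m=2^{30k}$. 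With these choices, the conclusion of Lemma \ref{DRC for sets} produces a set $U\subseteq A_{w_1}$ with $|U|\geq 2^{30k}$ in which fewer than a $1/s^k = 1/2^{30k^2}$ fraction of the $k$-subsets have fewer than $2^{30k}$ common out-neighbors in $B_{w_1}$, which is exactly the claim.

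It only remains to check the hypothesis of Lemma \ref{DRC for sets}, namely
\[
\binom{a\gamma^k}{k} \;>\; 2s^k \max\!\left(\binom{a}{k}\left(\tfrac{m}{b}\right)^k,\,1\right).
\]
Since $a\leq n$ and $b\geq n/5$, we have $am/b \leq 5m = 5\cdot 2^{30k}$, so the right-hand side is at most $2\cdot 2^{30k^2}\cdot (5\cdot 2^{30k})^k / k! = 2^{O(k^2)}$, which is \emph{independent} of $n$. For the left-hand side, $a\geq n/5$ gives $\binom{a\gamma^k}{k}\geq (a\gamma^k/k)^k \geq \bigl(n\delta^k/(5\cdot 32^k\cdot k)\bigr)^k$, which grows as $n^k$ with a prefactor of the form $2^{-O(k^2)}\delta^{k^2}$. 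Under the standing assumption $n \geq (3/\delta)^{1000k}$ from Theorem \ref{thm:cut result}, this dwarfs the right-hand side with enormous room to spare, so the hypothesis holds.

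I expect no real obstacle: the claim is essentially a packaged instance of dependent random choice, and the only work is choosing the parameters and verifying that $n$ is large enough relative to $\delta$ and $k$. The verification is of the same flavor as the one carried out inside Claim \ref{Case 1} in the proof of the linking lemma, so I would spell it out briefly and then quote Lemma \ref{DRC for sets} to conclude. The role of the claim in the broader argument (which is outside its statement) is to then feed $U$ into Lemma \ref{inclusion-exlusion}: since within $U$ at least a $1-1/2^{30k^2}$ fraction of $k$-subsets have $\geq 2^{30k}$ common out-neighbors in $B_{w_1}$, and at least a $1/2^{30k^2}$ fraction are transitive, a single $k$-subset $Z\subseteq U$ enjoys both properties, giving the transitive $A_{w_1}$-tail with a usable common out-neighborhood in $B_{w_1}$ needed for constructing the $k$-chain through $w_1$.
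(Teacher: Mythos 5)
Your proposal is correct and follows essentially the same route as the paper: a direct application of Lemma \ref{DRC for sets} to the bipartite graph between $A_{w_1}$ and $B_{w_1}$ with the same parameters $a=|A_{w_1}|$, $b=|B_{w_1}|$, $\gamma=\delta/32$ and $m=s=2^{30k}$, with the hypothesis verified using $a\le n$, $b\ge n/5$ and the standing bound $n\ge(3/\delta)^{1000k}$. The only difference is cosmetic: you bound the right-hand side by a quantity independent of $n$ and let the left-hand side grow with $n$, whereas the paper verifies the same inequality via a single chain of estimates; both checks amount to the same calculation.
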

\begin{proof}
Apply Lemma \ref{DRC for sets} with parameters $a=|A_{w_1}|\geq \frac{n}{5}$, $b=|B_{w_1}|\geq \frac{n}{5}$, $\gamma=\frac{\delta}{32}$, $m=2^{30k}$,$s=2^{30k}$ to the graph with parts $(A_{w_1},B_{w_1})$ . Indeed, it holds that
\[\binom{a\gamma^k}{k}\geq \frac{a^k\gamma^{k^2}}{k^k}\geq
2s^k\frac{a^ke^k }{k^k}\cdot \left(\frac{\delta^{k}}{2\cdot 32^kse}\right)^k\geq 2s^k\binom{a}{k}\left(\frac{n^{-\frac{1}{1000}}}{2^{40k}}\right)^k\geq 2s^k\binom{a}{k}\left(\frac{m}{b}\right)^k
\]
where the last inequality holds since $b\geq\frac{n}{5}$ so $\frac{m}{b}<\frac{1}{\sqrt{n}}$, and it holds that $n>2^{1000k}$. We are done since also
\[
\binom{a\gamma^k}{k}\geq\left( \frac{a\gamma^{k}}{k}\right)^k\geq (\sqrt{n})^k\geq 2s^k.
\]
\end{proof}

By Lemma \ref{inclusion-exlusion} (with $A=V(T)$) it holds that more than a $1/2^{30k^2}$ fraction of the $k$-sets in $U$ are tails, hence there is a $k$-set $X_1\subseteq U$ which is a tail and has at least $2^{30k}$ out-neighbors in $B_{w_1}$.
Finally, again by Lemma \ref{inclusion-exlusion}, in the set of the at least $2^{30k}$ common out-neighbors of $X_1$ in $B_{w_1}$, there is a head $Y_1$ of size $k$. This gives that  $X_1\cup\{w_1\}\cup Y_1$ induces a $k$-chain, which we call $C_{t+1}'$.

We update the sets $F,V$ and $A_v,B_v$ for each $v$, by setting $F:=F\cup I_{3k}(C_{t+1}',\pi)$, $V:=V\setminus F$, $A_v:=N^-(v)\cap V$ and $B_v:=N^+(v)\cap V$.
We continue with an uncovered vertex $w_2$, we find the $k$-chain $C_{t+2}'=X_2\cup\{w_2\}\cup Y_2$, and again update $F:=F\cup I_{3k}(C_{t+2}',\pi)$ as well as $V:=V\setminus F$. Furthermore, we update the sets $A_v:=N^-(v)\cap V$ and $B_v:=N^+(v)\cap V$.

We repeat this procedure until we covered all vertices in $R$, noting that at every step it holds that $|A_v|+|B_v|\geq n-|F|$ and $|A_v||B_v|>\frac{n}{5}$, so equation (\ref{eq:A_w}) holds at every step. Indeed, we always have $$|F|\leq 2k\cdot2^{600k}\log n+ (20k^2+1)|R|<\sqrt{n},$$ since $|I_{3k}(C_{t+i},\pi)|\leq 
(2k+1)(6k+1)<20k^2$ as $|C_{t+i}'|=|X_i\cup\{w_i\}\cup Y_i|=2k+1$, so that in each of the $|R|$ steps, we add at most $20k^2$ new vertices to $F$. Note that by construction the collection of sets $\{X_i\cup Y_i\mid i\in[|R|]\}$ obtained in the covering process is $3k$-apart in $\pi$ (hence also in each $C_i$). We remove the vertices in these sets from the $3k$-chains $C_1,C_2,\ldots C_t$, and are left with $C_1',C_2',\ldots C_t'$. In what follows, we use the obtained sets $F$ and $V$.
\subsection{Linking the chains}\label{cutdense2}
To finish the proof we will link the structures from the previous subsection. We will link the last $k$ vertices of each $C_i'$ to the first $k$ of $C_{i+1}'$, and the last $k$ of $C_{t+|R|}'$ to the first $k$ of $C_1'$. 

First we want to link $C_1'$ to $C_2'$; for this we apply Lemma \ref{lem:linking} to the tournament $T[V\cup H_1\cup H_2]$ with the ordering $\pi$, where $H_1$ and $H_2$ are respectively the last $k$ vertices in $C_1'$ and first $k$ of $C_2'$. Thus we get a $k$-th power of a path $P$ between $H_1$ and $H_2$, so that $P\setminus (H_1\cup H_2)$ can be partitioned into sets $S_1,\ldots,S_m$ which are $3k$-apart in $\pi$. Now we update $F$ by setting $F:=F\cup I_{3k}(S_1\cup\ldots\cup S_m,\pi)$, and also $V:=V\setminus F$. Notice that this means that $V$ decreased by at most $10k^2\log n$, since $|S_1\cup\ldots\cup S_m|\leq k\log n$, so that $|I_{3k}(S_1\cup\ldots\cup S_m,\pi)|\leq (6k+1)k\log n\leq 10k^2\log n$.
We repeat this process until we made all connections; suppose we are about to connect $C_i'$ to $C_{i+1}'$, so we have deleted at most 
$(t+|R|)10k^2\log n \leq 20k^22^{600k}\log^2 n \leq \sqrt{n}$ vertices from $V$ during the linking process, meaning that 
altogether we deleted $|F| \leq 2\sqrt{n}$ vertices and therefore 
$$\delta^0(T[V\cup H_{2i-1}\cup H_{2i}])>n/4-\frac{\delta}{200}n-2\sqrt{n}>\left(\frac{1}{4}-\frac{\delta}{150}\right)n$$
so we can continue applying Lemma \ref{lem:linking}. Indeed, let $T'=T[V\cup H_{2i-1}\cup H_{2i}]$ and let $n'=|T'|$. Since we have that $|V|\geq n -2\sqrt{n}$, it is easy to see that $T'$ is $\delta'$-cut-dense with $\delta'=0.99\delta$ and also $n'>\left(\frac{2}{\delta'}\right)^{1000k}$. Furthermore $\delta^0(T')>(\frac{1}{4}-\frac{\delta'}{100})n'$, and $N^+_{T'}(H_{2i-1})>\frac{n'}{2^{20k}}$, while  $N^-_{T'}(H_{2i})>\frac{n'}{2^{20k}}$.

Notice also that the collection of all sets $S_i$ used for creating the linking paths, together with the collection of all $X_i\cup Y_i$ used for covering $R$, form a collection that is $3k$-apart in $\pi$ by construction. Note also that each set in this collection is of size at most $2k$; this means that when we remove the vertices used in the linking paths from the structures $C_1',\ldots, C_{t+|R|}'$, we are left with $k$-th powers of paths by Lemma \ref{lem:destroying powers}, and we call them $C_1'',\ldots, C_{t+|R|}''$. Note that by construction, the last $k$ vertices of each $C_i''$ are linked to the first $k$ of $C''_{i+1}$ (and similarly, $C''_{t+|R|}$ to $C''_1$) by disjoint $k$-th powers of paths (i.e. the linking paths). Furthermore, the collection $C_1'',\ldots, C_{t+|R|}''$ together with the constructed links cover the whole vertex set of $V$. This gives the $k$-th power of a Hamilton cycle, which completes the proof.
\end{proof}

\begin{remar}\label{rem:hamiltonian path}
Note that an immediate result which follows from our proof is that in any tournament with the same conditions as in Theorem \ref{thm:cut result}, we can find a spanning $k$-chain (which we get before the last step of the linking process). We will use this observation in consequent sections.
\end{remar}

\section{A bridge}\label{abridge}
In this section, we will prove a result which, as discussed in Section \ref{sec:proof overview}, will allow us to construct the so-called bridges needed in the proof of Theorem \ref{maintheorem}.
\begin{thm}\label{bridgeslemma}
There exists a constant $C = C_k$ such that the following holds. Let $T$ be a tournament on $n$ vertices and $X,Y$ a partition of its vertex set such that $\overrightarrow{e}(X,Y)\geq Cn^{2- 1/ \lceil k/2 \rceil}$. Then there exists a sequence $(x_1, \ldots, x_k, y_1, \ldots, y_k) \in X^k \times Y^k$ which is the $k$-th power of a path.
\end{thm}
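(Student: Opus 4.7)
Set $r = \lceil k/2 \rceil$. My plan is to find transitive sequences $A = (x_1, \ldots, x_k) \subseteq X$ and $B = (y_1, \ldots, y_k) \subseteq Y$ such that $x_i \to y_j$ whenever $j \leq i$, as this configuration, together with transitivity of $A$ and $B$, is exactly the $k$-th power of a directed path from $A$ to $B$. The starting observation is a useful reduction: letting $A_1 = \{x_{k-r+1}, \ldots, x_k\}$ and $B_1 = \{y_1, \ldots, y_r\}$, since $k \leq 2r$, any pair $(i,j)$ with $j \leq i$ satisfies either $j \leq r$ (so $y_j \in B_1$) or $i \geq k - r + 1$ (so $x_i \in A_1$). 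Hence the staircase condition is implied by the two complete bipartite dominations $A_1 \to B$ (a $K_{r,k}$) and $A \to B_1$ (a $K_{k,r}$), and I therefore reduce the problem to finding transitive $A, B$ of size $k$ satisfying both of these.

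The main strategy will be two applications of dependent random choice (Lemma \ref{DRC for sets}) combined with Ramsey-type extractions of transitive sub-tournaments. First, I would apply Lemma \ref{DRC for sets} to the bipartite graph $\overrightarrow{E}(X,Y)$ with parameter $r$: for $C = C_k$ sufficiently large, this yields $U \subseteq X$ with $|U| \geq 2^{\Theta(k^2)}$ such that all but a $1/|U|^r$ fraction of $r$-subsets of $U$ have common out-neighborhood in $Y$ of size at least $m = 2^{\Theta(k^2)}$. Using Lemma \ref{lem:transitive subtournament} and a union bound, I would locate a transitive $A = (x_1, \ldots, x_k) \subseteq U$ whose last-$r$ subset $A_1$ is good, and set $W := N^+_Y(A_1)$ of size $\geq m$. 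Dually, applying Lemma \ref{DRC for sets} from the $Y$-side, I would obtain $V \subseteq Y$ with analogous good in-neighborhood properties for $r$-subsets.

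The crux is then to find transitive $B \subseteq W$ of size $k$ whose first $r$ vertices $B_1$ are dominated by all of $A$; combined with $B \subseteq W$ (which delivers $A_1 \to B$), this gives the required staircase. Concretely, I would search inside $W \cap V$ (or iteratively restricted versions thereof), extracting a transitive $B_1$ whose common in-neighborhood in $X$ contains $A_2 := A \setminus A_1$, thereby guaranteeing $A_2 \to B_1$. Once $B_1$ is located, I would extend to a transitive $B \subseteq W$ of size $k$ by finding a transitive $(k-r)$-subset in $N^+(B_1) \cap W$, which remains large since $|W|$ is much greater than $2^{k-r-1}$.

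The hardest step will be ensuring $|N^+_Y(A)| \geq 2^{r-1}$ so that a transitive $r$-subset $B_1$ can be extracted from it; a direct K\"{o}v\'{a}ri--S\'{o}s--Tur\'{a}n argument for $K_{k, 2^{r-1}}$ would demand density $\sim n^{2-1/k}$, strictly beyond our hypothesis. The resolution must exploit that $A \subseteq U$ is not arbitrary but comes from DRC, coupling the two applications (on $X$- and $Y$-sides) so that the resulting $U$ and $V$ interact favorably. This delicate double-averaging---using Lemma \ref{prop:KST} alongside DRC---is where the matching of the staircase's Tur\'{a}n threshold $n^{2-1/r}$ with the KST bound for $K_{r,r}$ gets established.
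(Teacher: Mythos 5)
Your reduction is sound: with $r=\lceil k/2\rceil$, transitive $A\subseteq X$ and $B\subseteq Y$ of size $k$ with $A\rightarrow B_1$ and $A_1\rightarrow B$ (where $A_1$ is the last $r$ vertices of $A$ and $B_1$ the first $r$ of $B$) do give the required $k$-th power, and you have correctly identified $\mathrm{ex}(n,K_{r,r})$ as the relevant threshold. But the proof stops exactly at the step that carries the entire content of the theorem. You first fix a transitive $A\subseteq U$ produced by one application of Lemma \ref{DRC for sets} with parameter $r$, and only afterwards look for $B_1$ inside the common out-neighborhood of all of $A$; you then concede that a direct count needs density $\sim n^{2-1/k}$ and write that "the resolution must exploit" a coupling of the two DRC applications via a "delicate double-averaging", without giving that argument. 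This is not a routine omission: at density $\Theta(n^{-1/r})$ the expected size of $N^{+}_Y(A)$ for a $k$-set is about $C^k n^{1-k/r}=o(1)$ (since $k\geq 2r-1$), and DRC with parameter $r$ controls only the $r$-subsets of $U$ — nothing prevents the out-neighborhoods of the two $r$-halves of $A$ from being disjoint. So with your order of quantifiers (first $A$, then $B_1$) the crux step fails for typical $A$. The natural repair is to reverse the order: a K\"{o}v\'{a}ri--S\'{o}s--Tur\'{a}n count at density $Cn^{2-1/r}$ (with $C$ large in terms of $k$) yields a popular $r$-set $B_1\subseteq Y$ with at least $k$ common in-neighbors, and $A$ must then be chosen inside $N^{-}_X(B_1)$ — but then the symmetric difficulty reappears for $A_1\rightarrow B\setminus B_1$ together with the transitivity requirements, and one is forced into an iterated construction with careful bookkeeping of how the cross-density degrades. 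None of this is carried out in your outline.

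That iterated construction is exactly what the paper does, and the contrast shows why your "build the two halves separately, then stitch" plan is hard to patch. Instead of ever asking a $k$-set (or a pre-chosen $r$-set) to have a large common neighborhood, the paper builds the power from the outside in, two vertices per round: Lemma \ref{countinglemma} produces a vertex $b\in X$ for which a $\delta/8$-fraction of $Y$ consists of out-neighbors $v$ with $|N^{-}_X(v)\cap N^{-}_X(b)|\geq\delta|X|/8$, and Claim \ref{generalbridgesclaim} uses this to find a square of a path $(a,b,c,d)\in X^2\times Y^2$ together with sets $X'\subseteq X$, $Y'\subseteq Y$ satisfying $a\rightarrow X'\rightarrow\{b,c\}$ and $\{b,c\}\rightarrow Y'\rightarrow d$, whose cross-density drops only by an absolute constant factor; recursing $k/2$ times accumulates a loss $\delta^{\Theta(k)}$, which is precisely what the hypothesis $\delta\approx Cn^{-1/r}$ (so $\delta^{k/2}|X|\geq C^{k/2}$) tolerates. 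Unless you supply the missing coupling argument — which would essentially amount to reproving a nested statement of this kind — your proposal is an incomplete sketch rather than a proof.
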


First, we need a few definitions. Let $T$ be a tournament which has a partition $(X,Y)$ such that $\overrightarrow{e}(X,Y) = \delta |X||Y|$ for some $\delta$. For every vertex $x \in X$, we define $Y(x)$ to be the set of vertices $v \in N^{+}_Y(x)$ such that $|N^{-}_X(v) \cap N^{-}_X(x)| \geq \frac{\delta|X|}{8}$. Analogously, for $y \in Y$, define $X(y)$ to be the set of vertices $u \in N^{-}_X(y)$ such that $|N^{+}_Y(u) \cap N^{+}_Y(y)| \geq \frac{\delta|Y|}{8}$.
\begin{lem}\label{countinglemma}
Suppose $\delta|X|, \delta|Y| > 2^{10} $. Then, there exist vertices $x \in X, y \in Y$ such that
$$\frac{|X(y)|}{|X|}, \frac{|Y(x)|}{|Y|} \geq \frac{\delta}{8}$$
\end{lem}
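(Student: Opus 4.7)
The key observation is that for any $v \in Y$, the quantity $|N^-_X(x) \cap N^-_X(v)|$ appearing in the definition of $Y(x)$ is exactly the in-degree of $x$ inside the sub-tournament $T[N^-_X(v)]$. This suggests a two-level averaging scheme: first identify vertices $v \in Y$ whose in-neighborhood $N^-_X(v)$ is large, and then, inside each such in-neighborhood, exploit the fact that tournaments have many vertices with near-average in-degree.

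My plan is to find $x \in X$ with $|Y(x)| \geq \delta|Y|/8$ directly, and then obtain $y$ by applying the identical argument with the orientation reversed. For the first part, set $Y_1 := \{v \in Y : d^-_X(v) \geq \delta|X|/2\}$; since $\sum_{v \in Y} d^-_X(v) = \overrightarrow{e}(X,Y) = \delta|X||Y|$, a Markov-type argument yields $\sum_{v \in Y_1} d^-_X(v) \geq \delta|X||Y|/2$. Next, fix any $v \in Y_1$ and consider the sub-tournament on the $N := d^-_X(v) \geq \delta|X|/2$ vertices of $N^-_X(v)$: its in-degrees sum to $\binom{N}{2}$, so a short pigeonhole computation gives that at least $N/4$ of them are at least $N/4$ (the computation only needs $N \geq 4$, which follows from $\delta|X| > 2^{10}$). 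Since $N/4 \geq \delta|X|/8$, each such vertex $x$ satisfies $|N^-_X(x) \cap N^-_X(v)| \geq \delta|X|/8$, which, together with $x \in N^-_X(v)$, says exactly that $v \in Y(x)$.

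Finally, double-counting gives
$$\sum_{x \in X} |Y(x)| \;=\; \sum_{v \in Y} |\{x \in N^-_X(v) : v \in Y(x)\}| \;\geq\; \sum_{v \in Y_1} \frac{d^-_X(v)}{4} \;\geq\; \frac{\delta|X||Y|}{8},$$
so by averaging over $x$ there exists $x \in X$ with $|Y(x)| \geq \delta|Y|/8$. The symmetric claim for $y$ follows from the exact same reasoning with $X$ and $Y$ swapped and all orientations reversed, working with the set of $u \in X$ satisfying $d^+_Y(u) \geq \delta|Y|/2$ in place of $Y_1$.

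The main technical point is keeping the constants sharp across the two averaging steps: the Markov step and the tournament in-degree pigeonhole each cost a factor of two, which together produce the $\delta/8$ appearing in both the definition of $Y(x)$ and the conclusion. The hypothesis $\delta|X|, \delta|Y| > 2^{10}$ is precisely what guarantees the sub-tournament in the second step is large enough for the pigeonhole to retain a constant fraction, so there is essentially no slack to spare.
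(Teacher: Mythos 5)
Your proof is correct and is essentially the argument in the paper: the restriction to $Y_1$ via Markov plays the role of the paper's re-orientation step that discards vertices of $Y$ with fewer than $\delta|X|/2$ in-neighbors, and your double count over pairs $(x,v)$ is the deterministic form of the paper's computation of $\mathbb{E}[|Y(x)|]$ for a uniformly random $x\in X$, with the same tournament in-degree pigeonhole inside $N^-_X(v)$ supplying the factor $1/4$. So this matches the paper's proof up to presentation.
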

\begin{proof}
By symmetry, we only prove the existence of $x$. First, note that we can, without loss of generality, re-orient some edges in $\overrightarrow{E}(X,Y)$ and have that every vertex in $Y$ has either no in-neighbors in $X$ or at least $\frac{\delta |X|}{2}$ of them, and that $\overrightarrow{e}(X,Y) \geq \frac{\delta}{2}|X||Y|$. Now, let $x \in X$ be chosen uniformly at random - we will show that $\mathbb{E}[Y(x)] \geq \frac{\delta |Y|}{8}$, which proves the lemma. 

Now, let $v \in Y$ and consider the tournament induced by $N^{-}_X(v)$, which is either empty or of size at least $\frac{\delta|X|}{2} \geq 50$. It is easy to see that at least a quarter of its vertices have at least a quarter of the vertex set in their in-neighborhood. Therefore, a quarter of the vertices $u \in N^{-}_X(v)$ are such that $|N^{-}_X(v) \cap N^{-}_X(u)| \geq \frac{\delta|X|}{8}$, i.e., $v \in Y(u)$. Hence, $\mathbb{P}(v \in Y(x)) \geq \frac{ |N^{-}_X(v)|}{4|X|}$ and thus,
$$\mathbb{E}[Y(x)] = \sum_{v \in Y} \mathbb{P}(v \in Y(x)) \geq \frac{1}{4|X|} \sum_{v \in Y} |N^{-}_X(v)| = \frac{\overrightarrow{e}(X,Y)}{4|X|} \geq \frac{\delta |Y|}{8}$$
\end{proof}

We can now prove the following claim, from which Theorem \ref{bridgeslemma} will be easy to derive. The proof uses induction together with the following observation. Suppose that $(a,b,c,d) \in X^2 \times Y^2$ is the square of a path and $X' \subseteq X,Y' \subseteq Y$ are sets such that $a \rightarrow X' \rightarrow \{b,c\}$ and $\{b,c\} \rightarrow Y' \rightarrow d$. Then, if $(x_1, \ldots, x_{k-2}, y_1, \ldots, y_{k-2}) \in (X')^{k-2} \times (Y')^{k-2}$ is the $(k-2)$-th power of a path, we have that $(a,x_1, \ldots, x_{k-2},b,c, y_1, \ldots, y_{k-2},d)$ is the $k$-th power of a path.

\begin{claim}\label{generalbridgesclaim}
Let $k$ be even and $\delta^{\frac{k}{2}}|X|, \delta^{\frac{k}{2}}|Y| > 2^{10k^2}$. Then $X^k \times Y^k$ contains a $k$-th power of a path.
\end{claim}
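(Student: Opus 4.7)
My plan is to proceed by induction on $k$ (restricted to even values), using the observation stated immediately before the claim: any square of a path $(a,b,c,d) \in X^2 \times Y^2$ together with a $(k-2)$-th power of a path in $X'^{k-2} \times Y'^{k-2}$ (for appropriately sandwiched $X'$, $Y'$) composes to a $k$-th power of a path.

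For the base case $k=2$, I would invoke Lemma \ref{countinglemma} to obtain $x \in X$, $y \in Y$ with $|X(y)| \geq \delta|X|/8$ and $|Y(x)| \geq \delta|Y|/8$. Since $\delta|X|/8 \geq 2$, the sub-tournament $T[X(y)]$ contains an edge $x_1 \to x_2$ with $x_1, x_2 \in X(y)$. Setting $y_1 = y$ and choosing $y_2 \in N^+_Y(x_2) \cap N^+_Y(y)$ (nonempty since $x_2 \in X(y)$), all five required edges of $P^2$ hold directly: $x_1 \to x_2$ by construction, $x_1 \to y_1$ and $x_2 \to y_1$ because $X(y) \subseteq N^-_X(y)$, and $x_2 \to y_2$, $y_1 \to y_2$ by the choice of $y_2$.

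For the inductive step from $k-2$ to $k$ (with $k \geq 4$ even), I aim to produce $(a,b,c,d) \in X^2 \times Y^2$ forming the square of a path together with sets $X' \subseteq X$, $Y' \subseteq Y$ such that $a \to X' \to \{b,c\}$, $\{b,c\} \to Y' \to d$, both $|X'|$ and $|Y'|$ are at least a constant multiple of $\delta|X|$ and $\delta|Y|$ respectively, and the induced density $\delta' := \overrightarrow{e}(X',Y')/(|X'||Y'|)$ is at least $\delta$ up to a constant factor depending only on $k$. The inductive hypothesis applied to $T[X' \cup Y']$ with parameter $\delta'$ then yields a $(k-2)$-th power of a path $(x_1,\dots,x_{k-2},y_1,\dots,y_{k-2}) \in (X')^{k-2} \times (Y')^{k-2}$, and the observation immediately before the claim then asserts that $(a, x_1,\dots,x_{k-2}, b, c, y_1,\dots,y_{k-2}, d)$ is the desired $k$-th power of a path. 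The required inductive inequality $(\delta')^{(k-2)/2}|X'| > 2^{10(k-2)^2}$ reduces to a bound that follows comfortably from $\delta^{k/2}|X| > 2^{10k^2}$, since $2^{10k^2}/2^{10(k-2)^2} = 2^{40k-40}$ absorbs any polynomial-in-$k$ loss incurred in size and density.

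The main obstacle is the explicit construction of $(a,b,c,d,X',Y')$. Starting from Lemma \ref{countinglemma}, which provides $x_0, y_0$ with $|X(y_0)|$ and $|Y(x_0)|$ large, the plan is to identify vertices $b \in X, c \in Y$ with $b \to c$ such that both $X_0 := N^-_X(b) \cap N^-_X(c)$ and $Y_0 := N^+_Y(b) \cap N^+_Y(c)$ have size on the order of $\delta|X|$ and $\delta|Y|$; then choose $a \in X_0$ with a constant fraction of out-neighbors inside $X_0$ (possible by a standard tournament averaging argument), choose $d \in Y_0$ symmetrically, and set $X' := X_0 \cap N^+_X(a)$, $Y' := Y_0 \cap N^-_Y(d)$. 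The delicate step, which I expect to be the main technical difficulty, is ensuring that after these two layers of neighborhood restrictions the bipartite density $\overrightarrow{e}(X',Y')/(|X'||Y'|)$ is still at least $\delta$ times a $k$-dependent constant. I anticipate this being achieved either by a counting argument in the spirit of Lemma \ref{countinglemma}, or by a dependent random choice step (Lemma \ref{DRC for sets}) applied to the bipartite graph between $X_0$ and $Y_0$ to select $a$ and $d$ so that their restricted neighborhoods inherit the edge density up to a constant factor.
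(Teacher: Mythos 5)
Your base case is correct (it is a mirror image of the paper's: you pick the edge inside $X(y)$ where the paper picks it inside $Y(b)$), and your inductive skeleton --- compose a square $(a,b,c,d)$ with a $(k-2)$-th power found in sandwiched sets $X',Y'$, and check that $(\delta')^{k/2-1}|X'|>2^{10(k-2)^2}$ survives because the threshold drops by $2^{40k-40}$ --- is exactly the paper's strategy, and your bookkeeping remark is sound provided one gets $\delta'\geq\delta/2^{O(1)}$ and $|X'|,|Y'|$ of order $\delta|X|,\delta|Y|$ up to absolute constants.

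However, the heart of the inductive step is precisely the part you leave unproved: you only sketch a plan (fix an edge $b\to c$ with both joint neighborhoods $X_0=N^-_X(b)\cap N^-_X(c)$ and $Y_0=N^+_Y(b)\cap N^+_Y(c)$ large, then restrict by $a$ and $d$) and you explicitly defer the density statement $\overrightarrow{e}(X',Y')\geq \Omega_k(\delta)|X'||Y'|$ to ``a counting argument or dependent random choice,'' without carrying either out. This is a genuine gap, and it is not a routine one: for a fixed pair $(b,c)$ chosen only by the size of its joint neighborhoods there is no evident reason the $X_0$--$Y_0$ density should be comparable to $\delta$. The paper avoids ever having to prove such a statement for a pre-chosen pair by selecting the four vertices sequentially, in the order $b,d,c,a$, so that the density comes for free from the definitions built into Lemma \ref{countinglemma}: first $b$ with $|Y(b)|\geq\delta|Y|/8$, then $d\in Y(b)$ with many in-neighbors inside $Y(b)$, giving $Y_1=Y(b)\cap N^-(d)$; since every vertex of $Y(b)$ has, \emph{by definition}, at least $\delta|X|/8$ in-neighbors in $X_1=N^-_X(b)$, the density $\delta_1$ from $X_1$ to $Y_1$ is automatically at least $\delta/8$. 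Then Lemma \ref{countinglemma} is applied again to $T[X_1\cup Y_1]$ to get $c\in Y_1$ with $|X_1(c)|$ large, $a\in X_1(c)$, and $X'=X_1(c)\cap N^+(a)$, $Y'=N^+_{Y_1}(c)$; the density from $X'$ to $Y'$ is again automatic from the definition of $X_1(c)$. So the missing idea is this nested, two-stage reuse of the counting lemma (with the two-sided neighborhood condition in the definitions of $Y(b)$ and $X_1(c)$ doing the density work), rather than a post-hoc density-preservation argument for a fixed edge $b\to c$; without it your inductive step does not go through as written.
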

\begin{proof}
We prove this by induction on $\frac{k}{2}$. Take the base case $k = 2$ and suppose that $\delta|X|, \delta|Y| > 2^{10}$. By Lemma \ref{countinglemma}, there exists a vertex $b \in X$ such that $Y(b) \geq \frac{\delta |Y|}{8} \geq 2$. Therefore, there exists an edge $cd$ with both $c,d \in Y(b)$. By definition of $Y(b)$, we have $| N^{-}_X(b) \cap N^{-}_X(c)| \geq \frac{\delta |X|}{8} \geq 1$ and so, there is a vertex $a \in N^{-}_X(b) \cap N^{-}_X(c)$. Notice that the sequence $(a,b,c,d)$ is the square of a path. 

Now, let $k \geq 4$ and suppose the statement is true for all smaller even $k$. Assume that $\delta^{\frac{k}{2}}|X|, \delta^{\frac{k}{2}}|Y|> 2^{10k^2}$. By the observation we made earlier, we need only to find a sequence $(a,b,c,d) \in X^2 \times Y^2$ which is the square of a path and sets $X' \subseteq X,Y' \subseteq Y$ such that $a \rightarrow X' \rightarrow \{b,c\}$, $\{b,c\} \rightarrow Y' \rightarrow d$ and 
\begin{equation}\label{indstep}
\left(\delta' \right)^{\frac{k}{2}-1} |X'| , \left(\delta' \right)^{\frac{k}{2}-1} |Y'| > 2^{10(k-2)^2}
\end{equation}
where $\delta'$ is such that $\overrightarrow{e}(X',Y') = \delta'|X'||Y'|$.

To that end, we first apply Lemma \ref{countinglemma} again to $T$. It implies the existence of a vertex $b \in X$ such that $Y(b) \geq \frac{\delta |Y|}{8}$. Now, take a vertex $d$ in $Y(b)$ which has at least a quarter of the vertices in $Y(b)$ as in-neighbors. Define $Y_1 = Y(b) \cap N^{-}(d)$ so that 
$$|Y_1| \geq \frac{1}{4}|Y(b)| \geq \frac{\delta}{32}|Y|$$ 
By definition, every vertex in $Y_1$ has at least $\frac{\delta }{8}|X|$ in-neighbors in $X_1 := N^{-}_X(b)$. Thus, we have $\overrightarrow{e}(X_1,Y_1) \geq \frac{\delta}{8}|Y_1||X|$, which, defining $\delta_1$ to be such that $\overrightarrow{e}(X_1,Y_1) = \delta_1|X_1||Y_1|$, gives
$$\delta_1 \geq \frac{\delta|X|}{8|X_1|} \geq \frac{\delta}{8}.$$

We now consider the tournament induced by $X_1 \cup Y_1$ and apply Lemma \ref{countinglemma} to it. Note that since $k \geq 4$ the conditions are still satisfied since $\delta_1 |Y_1| \geq \frac{\delta}{8}|Y_1| \geq \frac{\delta^2}{2^8}|Y| > 2^{10}$ and $\delta_1 |X_1| \geq \frac{\delta}{8}|X| > 2^{10}$. Therefore, there exists a vertex $c \in Y_1$ such that $X_1(c) \geq \frac{\delta_1 }{8} |X_1|$. Much like before, take a vertex $a$ in $X_1(c)$ which has at least a quarter of its vertices as out-neighbors and so, if we define $X_2 = X_1(c) \cap N^{+}(a)$ we have $$|X_2| \geq \frac{1}{4}|X_1(c)| \geq  \frac{\delta_1 }{32} |X_1| \geq \frac{\delta}{2^8} |X|$$
Letting $Y_2 = N^{+}_{Y_1}(c)$, we have, by definition, that $\overrightarrow{e}(X_2,Y_2) \geq \frac{\delta_1}{8} |X_2||Y_1|$. This in turn gives that
$$\delta_2 \geq \frac{\delta_1|Y_1|}{8|Y_2|} \geq \frac{\delta_1}{8} \geq \frac{\delta}{2^6}$$
where $\delta_2$ is defined so that $\overrightarrow{e}(X_2,Y_2) = \delta_2|X_2||Y_2|$. In particular, $\delta_2|Y_2| \geq \delta_1|Y_1|/8$. We now take $X' = X_2$ and $Y' = Y_2$ and $\delta' = \delta_2$. In order to verify the condition (\ref{indstep}), note that by the previously displayed inequalities, we have $\delta' \geq \frac{\delta}{2^6}$, 
$\delta'|X'|=\delta_2|X_2| \geq \frac{\delta^2}{2^{14}} |X|$  and $\delta' |Y'|=\delta_2|Y_2| \geq \frac{\delta_1}{8}|Y_1| \geq \frac{\delta^2}{2^{11}}|Y|$. These bounds can be used to note
$$\left(\delta' \right)^{\frac{k}{2}-1} |X'|  = \left(\delta' \right)^{\frac{k}{2}-2} \cdot \delta' |X'| \geq \left(\frac{\delta}{2^6} \right)^{\frac{k}{2}-2} \cdot \frac{\delta^2}{2^{14}} |X| \geq \frac{\delta^{\frac{k}{2}}}{2^{10k}}|X| > 2^{10(k-2)^2}$$
and the same occurs for $Y'$.
\end{proof}

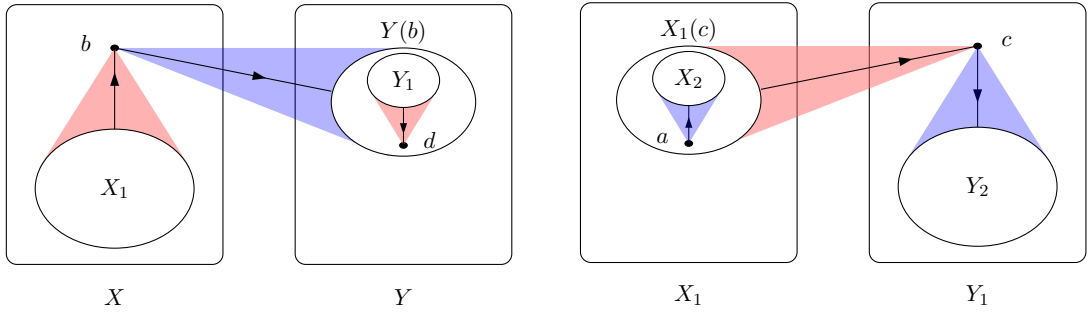
\begin{figure}[h]
\begin{tikzpicture}[scale = 1.2, xscale=1.6, yscale=1.2]
\draw[rounded corners] (-3.75,-0.2) rectangle (-2.25,2.2);
\draw[rounded corners] (-0.25,-0.2) rectangle (-1.75,2.2);

\node[scale=0.8] (X) at(-3,-0.5) {$X$};
\node[scale=0.8] (Y) at(-1,-0.5) {$Y$};
\node[scale=0.8] (b) at(-3.2,1.85) {$b$};
\filldraw[black] (-3,1.8) circle (0.75pt);
\fill[fill=red!60, opacity=0.5] (-3,1.8)--(-3.52,0.7)--(-2.48,0.7);

\draw[fill=white] (-3,0.5) circle (0.55);
\node[scale=0.8] (X_1) at(-3,0.5) {$X_1$};
\draw[arrow=2] (-3,1.05) -- (-3,1.8);

\fill[fill=blue!100, opacity=0.3] (-3,1.8)--(-1,1.8)--(-1.1,0.8);

\draw[fill=white] (-1,1.3) circle (0.5);
\filldraw[black] (-1,0.9) circle (0.75pt);
\node[scale=0.8] (d) at(-0.82,0.95) {$d$};
\draw[arrow=2] (-3,1.8) -- (-1.5,1.4);

\fill[fill=red!60, opacity=0.5] (-1,0.9)--(-0.77,1.4)--(-1.23,1.4);
\draw[fill=white] (-1,1.5) circle (0.25);

\draw[arrow=1.6] (-1,1.25) -- (-1,0.9);

\node[scale=0.8] (Yb) at(-1,1.95) {$Y(b)$};
\node[scale=0.8] (Y1) at(-1,1.5) {$Y_1$};

\end{tikzpicture}
\qquad
\begin{tikzpicture}[scale = 1.2, xscale=1.6, yscale=1.2]
\draw[rounded corners] (-3.75,-0.2) rectangle (-2.25,2.2);
\draw[rounded corners] (-0.25,-0.2) rectangle (-1.75,2.2);

\node[scale=0.8] (X) at(-3,-0.5) {$X_1$};
\node[scale=0.8] (Y) at(-1,-0.5) {$Y_1$};
\node[scale=0.8] (c) at(-0.8,1.85) {$c$};

 \fill[fill=red!60, opacity=0.5] (-1,1.8)--(-3,1.8)--(-2.8,0.858);

\filldraw[black] (-1,1.8) circle (0.75pt);

\draw[arrow=2] (-1,1.8) -- (-1,1.05);
\fill[fill=blue!100, opacity=0.3] (-1,1.8)--(-0.485,0.7)--(-1.515,0.7);
\draw[fill=white] (-1,0.5) circle (0.55);
\node[scale=0.8] (Y_2) at(-1,0.5) {$Y_2$};

\draw[fill=white] (-3,1.3) circle (0.5);
\filldraw[black] (-3,0.9) circle (0.75pt);
\node[scale=0.8] (d) at(-3.18,0.95) {$a$};
\draw[arrow=2] (-2.5,1.4) -- (-1,1.8);

\draw[arrow=1.6] (-3,0.9)-- (-3,1.25) ;
\fill[fill=blue!100, opacity=0.3] (-3,0.9)--(-3.23,1.4)--(-2.77,1.4);
\draw[fill=white] (-3,1.5) circle (0.25);

\node[scale=0.8] (Yb) at(-3,1.95) {$X_1(c)$};
\node[scale=0.8] (Y1) at(-3,1.5) {$X_2$};

\end{tikzpicture}
\caption{The construction of $X_1,Y_1,X_2,Y_2$}
\end{figure}

\begin{proof}[ of Theorem \ref{bridgeslemma}]
Take $C_k = 2^{30k}$ for all even $k$ and $C_k = C_{k+1}$ for all odd $k$. Note that we only need to consider the case of $k$ even. Let $\delta$ be such that $\delta|X||Y| = \overrightarrow{e}(X,Y) \geq Cn^{2-\frac{2}{k}}$. By Claim \ref{generalbridgesclaim}, it is enough to show that $\delta^{\frac{k}{2}}|X|, \delta^{\frac{k}{2}}|Y| > 2^{10k^2}$. Indeed, note that
$$\delta^{\frac{k}{2}}|X| \geq \frac{C^{\frac{k}{2}} n^{k-1}}{|X|^{\frac{k}{2}}|Y|^{\frac{k}{2}}} \cdot |X| \geq C^{\frac{k}{2}} > 2^{10k^2}$$
since $|X|,|Y| \leq n$. Similarly, $\delta^{\frac{k}{2}}|Y| > 2^{10k^2}$.
\end{proof} 
\section{Proof of Theorem \ref{maintheorem}}\label{sec:proof of main result}
Let us recall the statement of Theorem \ref{maintheorem}. We will give its proof in this section. For a brief outline, we refer the reader to Section \ref{sec:proof overview}. 
\MainTheorem*
\begin{proof}
We will assume $c = c(k)$ is arbitrary large in terms of $k$ and will alert the reader to whenever this is most important. We also then let $n$ be sufficiently large.
First, we prepare our tournament in the next section. 
\subsection{Preparation}
Take a balanced partition $A', B'$ of $T$ which minimizes $\overrightarrow{e}(A',B')$. Note that we can assume that $\overrightarrow{e}(A',B') < n^{2-\frac{1}{1000k}} = o(n^2)$, as otherwise, we can apply Theorem \ref{thm:cut result} (with $\delta= 4n^{-\frac{1}{1000k}}$) to $T$ since $\delta^{0}(T) \geq \frac{n}{4}$ and get the desired $k$-th power of a Hamilton cycle. In turn, this gives the following.
\begin{lem}
There exist sets $A \subseteq A'$ and $B \subseteq B'$ such that both $T[A]$ and $T[B]$ are $o(n)$-almost regular and $R = V(T) \setminus (A \cup B)$ has size $o(n)$.
\end{lem}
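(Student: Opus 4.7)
The strategy is to extract $A,B$ from $A',B'$ by removing a small set of ``bad'' vertices, using the crucial fact that $\overrightarrow{e}(A',B')$ is $o(n^2)$ together with both the minimum \emph{and} maximum degree implied by $\delta^{0}(T) \geq n/4 + cn^{1-1/\lceil k/2\rceil}$. Set $\epsilon_n := n^{-1/(2000k)}$, so that $\epsilon_n = o(1)$ and $\overrightarrow{e}(A',B')/(\epsilon_n n^2) = o(1)$. I would declare a vertex $v \in A'$ \emph{bad} if its out-degree inside $T[A']$ deviates from $n/4$ by more than $\epsilon_n n$, i.e.\ $|d^+_{A'}(v) - n/4| > \epsilon_n n$, and analogously for $B'$. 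The core task is to bound the number of bad vertices by $o(n)$.

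The lower tail is easy. If $d^+_{A'}(v) < n/4 - \epsilon_n n$ for $v \in A'$, then $d^+_{B'}(v) = d^+(v) - d^+_{A'}(v) > \epsilon_n n + cn^{1-1/\lceil k/2\rceil} > \epsilon_n n$, and so Markov gives at most $\overrightarrow{e}(A',B')/(\epsilon_n n) = o(n)$ such vertices. The upper tail is the main obstacle: having large out-degree inside $T[A']$ does not directly force many cut-edges at $v$. My plan is to translate it through the identity
\[
d^-_{A'}(v) \;=\; d^-(v) - d^-_{B'}(v) \;=\; d^-(v) + d^+_{B'}(v) - |B'|,
\]
so that $d^+_{A'}(v) > n/4 + \epsilon_n n$ becomes $Y_v := 3n/4 - d^-(v) - d^+_{B'}(v) > \epsilon_n n$. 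A direct edge count shows $\sum_{v\in A'} Y_v = O(n)$, so once the positive part is controlled, the negative part is too. For the positive part, the degree condition gives the upper bound $d^-(v) \leq 3n/4 - cn^{1-1/\lceil k/2\rceil}$, hence $Y_v \geq cn^{1-1/\lceil k/2\rceil} - d^+_{B'}(v)$, i.e.\ $\max(-Y_v,0) \leq d^+_{B'}(v)$; summing gives $\sum_v \max(-Y_v,0) \leq \overrightarrow{e}(A',B')$, and therefore
\[
\sum_v \max(Y_v,0) \;=\; \sum_v Y_v + \sum_v \max(-Y_v,0) \;=\; O(\overrightarrow{e}(A',B')).
\]
Markov again yields $o(n)$ bad vertices in the upper tail, and hence $|A_{\mathrm{bad}}| = o(n)$.

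Setting $A := A' \setminus A_{\mathrm{bad}}$ and $B := B' \setminus B_{\mathrm{bad}}$ (by symmetry, applying the same argument with the roles of $A'$ and $B'$ swapped and with $\overrightarrow{e}(A',B')$ still playing the role of the sparse cut), I would conclude the proof as follows. First $|A| = n/2 - o(n)$ and $|B| = n/2 - o(n)$, so $|R| = |A_{\mathrm{bad}}| + |B_{\mathrm{bad}}| = o(n)$. Second, for $v \in A$,
\[
d^+_A(v) \;=\; d^+_{A'}(v) - d^+_{A_{\mathrm{bad}}}(v) \;=\; n/4 \pm \epsilon_n n \pm o(n) \;=\; |A|/2 \pm o(n),
\]
and identically $d^-_A(v) = |A|/2 \pm o(n)$; hence $T[A]$ is $o(n)$-almost regular, and similarly $T[B]$.

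The main obstacle, as noted, is the upper tail in $T[A']$, which only admits a clean bound once one converts high $d^+_{A'}(v)$ into low $d^-(v) + d^+_{B'}(v)$ and uses the \emph{two-sided} degree information from the hypothesis (both $d^+(v) \geq n/4 + cn^{1-1/\lceil k/2\rceil}$ and its dual) rather than just the lower bound on $\delta^0(T)$. The numerology of choosing $\epsilon_n = n^{-1/(2000k)}$ relative to the sparsity $n^{-1/(1000k)}$ of the cut is harmless: any $\epsilon_n$ satisfying $\epsilon_n \gg \overrightarrow{e}(A',B')/n^2$ and $\epsilon_n = o(1)$ works.
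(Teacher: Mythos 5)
Your proposal is correct and takes essentially the same route as the paper: both delete from $A'$ (resp.\ $B'$) the $o(n)$ vertices whose degrees deviate, bounding the vertices with many edges in the sparse direction by Markov applied to $\overrightarrow{e}(A',B') = o(n^2)$, and bounding the vertices with too-large within-part out-degree by an averaging/double-counting argument against the fixed total $\sum_{v \in A'} d^{+}_{A'}(v) = \binom{|A'|}{2}$. Your $Y_v$-reformulation of the upper tail (controlling the negative part by $d^{+}_{B'}(v)$ and summing) is just a different bookkeeping of the same double count that the paper performs with its sets $X_1, X_2$, and all the remaining steps (symmetry for $B'$, and deducing $o(n)$-almost regularity of $T[A]$, $T[B]$ from $|A|,|B| = n/2 - o(n)$) coincide with the paper's.
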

\begin{proof}
Let $\delta > 0$ be such that $\overrightarrow{e}(A',B') = \delta n^2 /4$, so that $\delta = o(1)$. Define the sets $X_1 = \{v \in A' : d^{+}_{B'}(v) \geq 2 \sqrt{\delta} n\}$ and $Y_1 = \{v \in B': d^{-}_{A'}(v) \geq 2 \sqrt{\delta} n\}$. Note that $|X_1| \leq \frac{\sqrt{\delta}}{8} n$. Indeed, $\delta n^2/4 = \overrightarrow{e}(A',B') \geq 2\sqrt{\delta} n |X_1|$. Similarly, $|Y_1| \leq \frac{\sqrt{\delta}}{8} n$. Since $\delta^0(T) \geq \frac{n}{4}$, every vertex $v \in A' \setminus X_1$ is such that $d^{+}_{A'}(v) \geq \frac{n}{4} - 2 \sqrt{\delta} n$ and every vertex $v \in B' \setminus Y_1$ is such that $d^{-}_{B'}(v) \geq \frac{n}{4} - 2 \sqrt{\delta} n$.

Define now the sets $X_2 = \{v \in A': d^{+}_{A'}(v) \geq \frac{n}{4} + \delta^{\frac{1}{4}} n\}$ and $Y_2 = \{v \in B': d^{-}_{B'}(v) \geq \frac{n}{4} + \delta^{\frac{1}{4}} n\}$. Then, double-counting gives
$${|A'| \choose 2} = \sum_{v \in A'} d^{+}_{A'}(v) \geq |X_2| \left(\frac{n}{4} + \delta^{\frac{1}{4}} n \right) + (|A'| - |X_1| - |X_2|) \left(\frac{n}{4} - 2 \sqrt{\delta} n \right)$$ 
$$ \geq |X_2| \delta^{\frac{1}{4}} n + |A'| \left(\frac{n}{4} - 2 \sqrt{\delta} n \right) - |X_1| n .$$
Since $|A'| \leq \lceil \frac{n}{2} \rceil$ and $|X_1| \leq \frac{\sqrt{\delta}}{8}n$, this implies that
$$|X_2| \delta^{\frac{1}{4}} n \leq {|A'| \choose 2} - |A'|\left(\frac{n}{4} - 2 \sqrt{\delta}n \right) + |X_1|n \leq 5 \sqrt{\delta} n^2$$
and thus, $|X_2| \leq 5 \delta^{\frac{1}{4}}n$. Similarly, $|Y_2| \leq 5 \delta^{\frac{1}{4}}n$. To finish, take $A = A' \setminus (X_1 \cup X_2)$ and $B = B' \setminus (Y_1 \cup Y_2)$. Indeed, since $\delta = o(1)$, we have that $|R| = o(n)$. Further, every vertex $v \in A$ has $d^{+}_{A'}(v) = \frac{n}{4} + o(n)$ and so, $d^{+}_{A}(v) = d^{+}_{A'}(v) \pm |R| = \frac{n}{4} + o(n)$. The equivalent happens to $B$ and thus, $T[A]$ and $T[B]$ are both $o(n)$-almost regular. 
\end{proof}

Now, let us define the set $R_{\text{bad}} = \{v \in R: d^{-}_A(v) < \frac{1}{10} |A| \text{ and } d^{+}_B(v) < \frac{1}{10} |B|\}$ and let $r_{\text{bad}}$ denote its cardinality. Partition $R \setminus R_{\text{bad}}$ into two sets $R_A$ and $R_B$ so that $R_{A} \subseteq \{v \in R: d^{-}_A(v) \geq \frac{1}{10} |A|\}$ and $R_{B} \subseteq \{v \in R: d^{+}_B(v) \geq \frac{1}{10} |B|\}$. Let $r_A$, $r_B$ denote their cardinality. 
\subsubsection{Further preparation of $R_{bad}$, $R_A$ and $R_B$} 
By definition, every vertex in $R_{\text{bad}}$ has at least $\frac{9}{10}|A|$ out-neighbors in $A$ and at least $\frac{9}{10}|B|$ in-neighbors in $B$. Therefore, we can use Lemma \ref{lem:partition lemma} to get a collection of $O(\log r_{\text{bad}})$ vertex disjoint $(B,A,k)$-chains in $R_{\text{bad}}$ which cover all but at most $O(1)$ vertices. We will let $\mathcal{C}_{\text{bad}}$ denote this collection of chains and let $R'_{\text{bad}} \subseteq R_{\text{bad}}$ denote the set of vertices not covered by $\mathcal{C}_{\text{bad}}$.

Similarly, we can also partition $R_A$ and $R_B$ into chains and a small set. Indeed, note that every vertex in $R_A$ has at least $\frac{1}{10}|A|$ in-neighbors in $A$ and at least $\delta^0(T) - |R| \geq \frac{n}{5}$ out-neighbors in $A \cup B$. Therefore, Lemma \ref{lem:partition lemma} gives a collection of $O(\log r_A)$ vertex-disjoint $(A,A \cup B,10k)$-chains in $R_A$ which cover all but at most $O(1)$ vertices. This collection will be denoted by $\mathcal{C}_A$ and the set of uncovered vertices by $R'_A \subseteq R_A$. Equivalently, we also get a collection $\mathcal{C}_B$ of $O(\log r_B)$ vertex-disjoint $(A \cup B,B,10k)$-chains in $R_B$ which cover all but a set $R'_B \subseteq R_B$ of size $O(1)$.

Let $\pi$ denote the ordering of $V(\mathcal{C}_A) \cup V(\mathcal{C}_B)$ where the chains in $\mathcal{C}_A \cup \mathcal{C}_B$ are put consecutively in some arbitrary way and inherit their own ordering. Finally, it will be convenient to define a large integer $d =2^{200k}( |R'_A \cup R'_B|+1)= O(1)$ and consequently consider the set $R_{\text{good}} = \{v \in R'_A \cup R'_B : d^{+}_{A \cup R_A}(v) + d^{-}_{B \cup R_B}(v) \leq 2d\} \subseteq R'_A \cup R'_B$. Let $r_{\text{good}}$ denote its cardinality. Let us also define the sets $X = A \cup (R_A \setminus R_{\text{good}})$ and $Y = B \cup (R_B  \setminus R_{\text{good}})$.

\begin{figure}[t]
\begin{tikzpicture}[scale = 0.95, xscale=1, yscale=1]

\draw (-3,1) ellipse (2.5 and 2); 
\draw (3,1) ellipse (2.5 and 2); 

\draw (0,-3) circle (0.5);

\node[scale=1] (A) at(-6.5,1) {$A$};
\node[scale=1] (rA) at(-7,-1.75) {$R_A \setminus R_{\text{good}} $};
\node[scale=1] (B) at(6.5,1) {$B$};
\node[scale=1] (RB) at(7,-1.75) {$R_B \setminus R_{\text{good}} $};
\node[scale=0.8] (Rg) at(0,-3) {$R_{\text{good}}$};
\node[scale=1] (Rg) at(0,5) {$R_{\text{bad}}$};
\node[scale=0.8] (Rg) at(1.5,4) {$R'_{\text{bad}}$};
\node[scale=0.7] (R'g) at(-1.25,-1.75) {$R'_A \setminus R_{\text{good}} $};
\node[scale=0.7] (R'g) at(1.25,-1.75) {$R'_B \setminus R_{\text{good}} $};

\draw[rounded corners] (-2,3.5) rectangle (2,4.5);
\draw[rounded corners] (-5.5,-2.25) rectangle (-0.5,-1.25);
\draw[rounded corners] (5.5,-2.25) rectangle (0.5,-1.25);
\draw (-2,-2.25) -- (-2,-1.25);
\draw (2,-2.25) -- (2,-1.25);
\draw (1,3.5) -- (1,4.5);

\draw (-1.5,4.1) rectangle (0.8,4.3);

\fill[fill=red!30] (-1.8,3.9) -- (-1.5,3.9) -- (-1.5,3.7) -- (-2.96, 2.1) -- (-4.25,2.2);
\fill[white] (-3.65,2.1) ellipse (0.65 and 0.2);
\draw (-3.65,2.1) ellipse (0.65 and 0.2);
\draw (-1.8,3.7) rectangle (-1.5,3.9);
\draw[arow=2]  (-2,3.5) -- (-3.2,2.5);

\fill[fill=blue!30] (0.2,3.7) -- (0.2,3.9) -- (0.5,3.9) -- (4.25,2.2) -- (2.96, 2.1);
\fill[white] (3.65,2.1) ellipse (0.65 and 0.2);
\draw (3.65,2.1) ellipse (0.65 and 0.2);
\draw (0.2,3.7) rectangle (0.5,3.9);
\draw[arow=2]  (2.9,2.5) -- (0.9,3.5);

\draw[arrow=1.6]  (0.6,4.2) -- (-1.3,4.2);
\draw[arrow=1.6]  (0.3,3.8) -- (-1.6,3.8);

\draw (-1.8,3.7) rectangle (0.5,3.9);

\draw (-5.25,-2.05) rectangle (-2.6,-1.85);

\fill[fill=red!30] (-4.9,-1.45) -- (-4.9,-1.65) -- (-4.6,-1.65) -- (-4.6,-1.45) -- (-3.65, 0.5) -- (-4.95,0.5);
\fill[white] (-4.3,0.5) ellipse (0.65 and 0.2);
\draw (-4.3,0.5) ellipse (0.65 and 0.2);
\draw (-4.9,-1.65) rectangle (-4.6,-1.45);
\draw[arow=2]  (-4.4,0) -- (-4.7,-1.25);

\fill[fill=green!30] (-2.55,-1.45) -- (-2.55,-1.65) -- (-2.25,-1.65) -- (-2.25,-1.45) -- (-2.05, -0.4) --  (-3.35,-0.4);
\fill[white] (-2.7,-0.4) ellipse (0.65 and 0.2);
\draw (-2.7,-0.4) ellipse (0.65 and 0.2);

\draw[arow=2]  (-2.45,-1.3) -- (-2.6,-0.75);

\fill[fill=green!30] (-2.55,-1.45) -- (-2.55,-1.65) -- (-2.25,-1.65) -- (2.2, 1.4) --  (0.85,1.4);
\fill[white] (1.5,1.4) ellipse (0.65 and 0.2);
\draw (1.5,1.4) ellipse (0.65 and 0.2);

\draw (-2.55,-1.65) rectangle (-2.25,-1.45);
\draw (-4.9,-1.65) rectangle (-2.25,-1.45);

\draw[arow=2] (-1.55,-0.9) -- (0.65,0.8);

\draw[arrow=1.6]  (-4.7,-1.55) -- (-2.45,-1.55);
\draw[arrow=1.6]  (-5.05,-1.95) -- (-2.8,-1.95);
\draw (5.25,-2.05) rectangle (2.6,-1.85);
\draw[arrow=1.6]  (2.8,-1.95) -- (5.05,-1.95);

\fill[fill=blue!30] (4.9,-1.45) -- (4.9,-1.65) -- (4.6,-1.65) -- (4.6,-1.45) -- (3.65, 0.5) -- (4.95,0.5);
\fill[white] (4.3,0.5) ellipse (0.65 and 0.2);
\draw (4.3,0.5) ellipse (0.65 and 0.2);

\draw[arow=2]  (4.7,-1.25) -- (4.4,0);

\fill[fill=green!30] (2.55,-1.45) -- (2.55,-1.65) -- (2.25,-1.65) -- (2.25,-1.45) -- (2.05, -0.4) --  (3.35,-0.4);
\fill[white] (2.7,-0.4) ellipse (0.65 and 0.2);
\draw (2.7,-0.4) ellipse (0.65 and 0.2);
\draw (2.55,-1.65) rectangle (2.25,-1.45);
\draw[arow=2]  (2.6,-0.75) -- (2.45,-1.3);

\fill[fill=green!30] (2.55,-1.45) -- (2.55,-1.65) -- (2.25,-1.65) -- (-2.2, 1.4) --  (-0.85,1.4);
\fill[white] (-1.5,1.4) ellipse (0.65 and 0.2);
\draw (-1.5,1.4) ellipse (0.65 and 0.2);

\draw[arrow=1.6]  (2.45,-1.55) -- (4.7,-1.55);
\draw[arow=2] (-0.65,0.8) -- (1.55,-0.9);

\draw (4.9,-1.65) rectangle (4.6,-1.45);
\draw (4.9,-1.65) rectangle (2.25,-1.45);

\draw [decorate,decoration={brace,  amplitude=5pt,raise=2pt},yshift=0pt]($(-8.2,-2.5)$) -- ($(-8.2,3)$) node [black,midway,xshift=-0.8cm] {$X$};
\draw [decorate,decoration={brace,  amplitude=5pt,raise=2pt},yshift=0pt]($(8.2,3)$) -- ($(8.2,-2.5)$) node [black,midway,xshift=0.8cm] {$Y$};
\end{tikzpicture}

\caption{The partition of $R$ - the red regions denote common neighborhoods in $A$, the blue regions denote common neighborhoods in $B$ and the green regions denote common neighborhoods in $A \cup B$.}
\label{partition}
\end{figure}
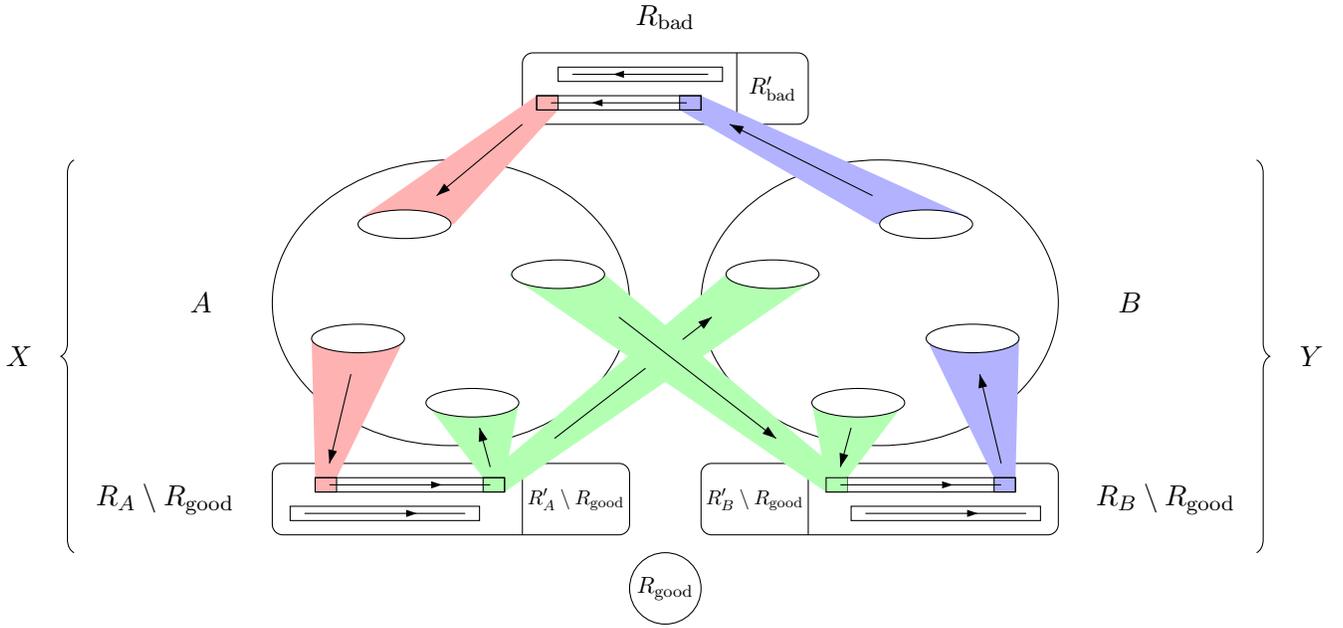
\subsection{Constructing bridges}\label{constructing}
As discussed in the proof outline, we will dedicate most of the proof to constructing bridges. First, let us define them. 
\begin{defn}
For $Z,W \in \{A,B\}$, a \emph{bridge going from $Z$ to $W$} is a $(Z,W,k)$-chain such that the following holds: its first $k$ vertices are in $Z$; its last $k$ vertices are in $W$; it intersects $A \cup B$ in at most $4k$ vertices.
\end{defn}
\noindent With this definition, we can now give a more detailed explanation of the proof strategy. 

\vspace{0.2cm}
\noindent
{\bf Main idea}
\vspace{0.2cm}

\noindent The goal is to find a collection $\mathcal{B}$ of $o(n)$ many vertex-disjoint bridges which cover $R$ (that is, every vertex in $R$ must be contained in some element of $\mathcal{B}$), such that the number of bridges in $\mathcal{B}$ going from $A$ to $B$ is positive and equal to the number of them going from $B$ to $A$. Now, recall first that both tournaments $T[A]$ and $T[B]$ are $o(n)$-almost regular and thus, so will be the tournaments $T[A \setminus V(\mathcal{B})]$ and $T[B \setminus V(\mathcal{B})]$, since $|\mathcal{B}| = o(n)$ and each bridge intersects $A \cup B$ in $O(1)$ many vertices. This also implies that they are both $\frac{1}{4}$-cut-dense and thus, they satisfy the conditions of Lemma \ref{lem:linking} and Theorem \ref{thm:cut result}. In particular, by Remark \ref{rem:hamiltonian path}, we can then find spanning chains $C_A \subseteq T[A \setminus V(\mathcal{B})]$ and $C_B \subseteq T[B \setminus V(\mathcal{B})]$. Afterwards, we can iteratively use Lemma \ref{lem:linking} (much like in Section \ref{cutdense2}) to link these spanning chains to the bridges in $\mathcal{B}$. Note indeed that the 'balanced' property of $\mathcal{B}$, that is, the fact that it contains as many bridges going from $A$ to $B$ as those going from $B$ to $A$, and further, that this number is positive, will allow us to construct these links in such a way that produces the $k$-th power of a Hamilton cycle. For an illustration of this fact, we refer the reader to Figure \ref{links}.

\vspace{0.2cm}
\noindent
{\bf The construction of $\mathcal{B}$}
\vspace{0.2cm}

\noindent In order to construct the collection $\mathcal{B}$, we will use the partitioning of $R$ that was done in the previous section (see Figure \ref{partition}). In effect, each part of $R$ will have to be covered by bridges in a specific manner. Moreover, at the same time, we will need to ensure that $\mathcal{B}$ has no imbalance, that is, that the number of bridges going from $A$ to $B$ is positive and equal to the number of them going from $B$ to $A$.

Now, in Section \ref{rarb}, we will start by finding $O(1)$ many vertex-disjoint bridges contained in $X \cup Y$ which cover $(R'_A \cup R'_B) \setminus R_{good}$ - these bridges will go either from $A$ to $A$, $A$ to $B$ or $B$ to $B$, have size at most $3k+1$ and form a collection which is $10k$-apart in $\pi$. Next, in Section \ref{precorr}, we find $r_{\text{good}} + |\mathcal{C}_{\text{bad}}| + |R'_{\text{bad}}| + 1 = O(\log n)$ many vertex-disjoint bridges (and also disjoint to the previous bridges) contained in $X \cup Y \cup R_{\text{good}}$ which cover $R_{\text{good}}$ (although not all of them have to intersect this set) - these will all go from $A$ to $B$, have size at most $4k+1$ and form, together with the previous bridges, a collection which is $10k$-apart in $\pi$. 

The two previous collections of bridges mentioned will be the 'hardest' ones to find. Afterwards, in Section \ref{complete}, we finish the construction of $\mathcal{B}$. First, note that since the previously found bridges are $10k$-apart in $\pi$ and have all size at most $4k+1$, we can use Lemma \ref{lem:destroying powers} to guarantee that each $C \in \mathcal{C}_A \cup \mathcal{C}_B$ is such that if we delete from it all the vertices used in previous bridges, we are still left with the $k$-th power of a path. Moreover, this also guarantees that each of these $k$-th powers are such that their  first and last $k$-sets of vertices have large common neighborhoods in $A \cup B$ consistent with those of the initial chains $C$. This means that we will be able to extend them into $|\mathcal{C}_A| + |\mathcal{C}_B| = O(\log n)$ many vertex-disjoint bridges which do not go from $B$ to $A$ and cover $(R_A \setminus R'_A) \cup (R_B \setminus R'_B)$. Similarly, we can also extend the chains in $\mathcal{C}_{\text{bad}}$ into $|\mathcal{C}_{\text{bad}}| = O(\log n)$ many vertex-disjoint bridges which go from $B$ to $A$ and cover $R_{\text{bad}} \setminus R'_{\text{bad}}$. To finish the covering, we find $|R'_{\text{bad}}| = O(1)$ many vertex-disjoint bridges going from $B$ to $A$ which cover $R'_{\text{bad}}$. For this, we will use that every vertex in $R'_{\text{bad}}$ has at least $\frac{9}{10}|A|$ out-neighbors in $A$ and at least $\frac{9}{10}|B|$ in-neighbors in $B$, and that $\overrightarrow{e}(B,A) \geq |A||B| - \overrightarrow{e}(A',B') =  |A||B| - o(n^2)$.

Finally, note that the covering of $R$ is done, but we have some imbalance in the collection of present bridges. Indeed, we have exactly $|\mathcal{C}_{\text{bad}}|+|R'_{\text{bad}}|$ bridges which go from $B$ to $A$ but in turn, we have at least $r_{\text{good}} + |\mathcal{C}_{\text{bad}}| + |R'_{\text{bad}}| + 1$ (and thus, a positive number of them) and at most $O(\log n)$ many bridges which go from $A$ to $B$. We then correct this imbalance by again using that $\overrightarrow{e}(B,A) \geq |A||B| - o(n^2)$ to create the necessary number of bridges going from $B$ to $A$.
\vspace{0.2cm}

\noindent We now proceed with the proof. First, let us set $\mathcal{B} = \emptyset$. 
\subsubsection{Covering $R'_A \setminus R_{good}$ and $R'_B \setminus R_{good}$}\label{rarb}
Building on previously discussed ideas, we cover $(R'_A \cup R'_B) \setminus R_{good}$ as follows.
\begin{lem}
There exists a collection of $O(1)$ vertex-disjoint bridges contained in $X \cup Y$ which cover $(R'_A \cup R'_B) \setminus R_{good}$ and are such that the following holds: none of them goes from $B$ to $A$ or has size more than $3k+1$; the collection is $10k$-apart in $\pi$. 
\end{lem}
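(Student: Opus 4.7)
I would process the $O(1)$ vertices of $(R'_A \cup R'_B) \setminus R_{good}$ one at a time, building a bridge of size $2k+1$ through each. Since each bridge uses $O(1)$ vertices, the cumulative set of forbidden vertices (used in previous bridges) stays of size $O(1)$, negligible compared to sizes like $|A|/10$. The $10k$-apart condition in $\pi$ will be automatic: each bridge I build consists of its middle vertex $v \in R'_A \cup R'_B$ together with vertices in $A \cup B$, so the bridge is disjoint from the domain $V(\mathcal{C}_A) \cup V(\mathcal{C}_B) \subseteq (R_A \setminus R'_A) \cup (R_B \setminus R'_B)$ of $\pi$.

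\textbf{Setup for $v \in R'_A \setminus R_{good}$} (the case $v \in R'_B$ being symmetric under edge reversal). Since $v \in R_A$, $|N^-_A(v)| \geq |A|/10$. The semidegree condition gives $d^+_A(v) + d^+_B(v) \geq n/4 - o(n) \geq n/5$, forcing at least one of $d^+_A(v), d^+_B(v)$ to be $\Omega(n)$; this choice determines the bridge direction. I aim for a bridge of the shape $(S_1, v, S_2)$ with $|S_1| = |S_2| = k$, which is a $k$-th power of a path precisely when $S_1, S_2$ are transitive, $S_1 \to v \to S_2$, and $S_1 \to S_2$ (as one checks by a direct positional calculation).

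\textbf{Case 1: $d^+_A(v) \geq |A|/10$; build an $A \to A$ bridge.} I work entirely inside $T[A]$, which is $o(n)$-almost regular, so every vertex in $A$ has at least $|A|/20$ in- and out-neighbors in $A$. By a K\H{o}v\'ari--S\'os--Tur\'an-type averaging (Lemma~\ref{prop:KST}) combined with Lemma~\ref{inclusion-exlusion}, I can first choose an $A$-head $S_2 \subseteq N^+_A(v)$ whose common in-neighborhood in $A$ satisfies $|N^-_A(S_2) \cap N^-_A(v)| = \Omega(|A|)$. A final application of Lemma~\ref{inclusion-exlusion} inside that intersection delivers an $A$-tail $S_1$, and the required edges $S_1 \to v \to S_2$ and $S_1 \to S_2$ all hold by construction.

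\textbf{Case 2: $d^+_B(v) \geq |B|/10$; build an $A \to B$ bridge -- the main obstacle.} Since $\overrightarrow{e}(A,B) = o(n^2)$, no direct density argument gives $S_1 \to S_2$ with $S_1 \subseteq A$, $S_2 \subseteq B$; this is precisely the situation Theorem~\ref{bridgeslemma} was designed for. The semidegree condition together with the almost-regularity of $T[A]$ implies $\overrightarrow{e}(A,B) = \Omega(n^{2-1/\lceil k/2\rceil})$, and a dependent-random-choice argument via Lemma~\ref{DRC for sets} shows the same bound holds for $\overrightarrow{e}(N^-_A(v), N^+_B(v))$, since both sides have size $\Omega(n)$. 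Applying Theorem~\ref{bridgeslemma} to $T[N^-_A(v) \cup N^+_B(v)]$ with bipartition $(N^-_A(v), N^+_B(v))$ produces a $k$-th power of a path $(x_1,\dots,x_k,y_1,\dots,y_k)$ in $N^-_A(v)^k \times N^+_B(v)^k$; inserting $v$ between the two halves gives a $k$-th power of a path of length $2k$, as verified by a short positional check (each newly required edge $x_i \to v$, $v \to y_j$ exists by choice of $N^-_A(v)$ and $N^+_B(v)$, and each $x_i \to y_j$ needed in the new ordering was already present in the old one). To additionally arrange that the first $k$ and last $k$ vertices form an $A$-tail and $B$-head respectively, I would pre-restrict $N^-_A(v)$ and $N^+_B(v)$ to positive-density subsets of $A$-tail-supporting and $B$-head-supporting vertices (using the density clause of Lemma~\ref{inclusion-exlusion}) before invoking Theorem~\ref{bridgeslemma}; this only removes an $o(1)$-fraction of the edges, and choosing $c = c(k)$ sufficiently large at the outset ensures all numerical thresholds in Theorem~\ref{bridgeslemma} and the auxiliary lemmas are comfortably met.
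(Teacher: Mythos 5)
Your Case 1 is essentially fine (and close in spirit to the paper's Case 1a), but Case 2 has a genuine gap, and it stems from never using the defining property of $R_{\text{good}}$. The only hypothesis distinguishing the vertices you must cover is that $v \notin R_{\text{good}}$, i.e.\ $d^{+}_{A \cup R_A}(v) + d^{-}_{B \cup R_B}(v) > 2d$ for the constant $d$; your dichotomy on $d^{+}_A(v)$ versus $d^{+}_B(v)$ ignores this entirely (indeed your argument, if it worked, would cover $R_{\text{good}}$ as well, which cannot be done with size-$(2k+1)$ bridges). The point of this lemma in the paper is precisely that for non-good vertices one can \emph{avoid} any Tur\'an-type crossing: if $v$ has more than $d$ out-neighbors in $A \cup R_A$ one continues the bridge on the $A$-side (possibly stepping through a $k$-set of $R_A$, giving size $3k+1$ and an $A\to A$ or $A\to B$ bridge whose crossing edges come for free from the large out-neighbourhoods of $R_A$-vertices), and otherwise $v$ has more than $d$ in-neighbors in $B\cup R_B$ and the bridge approaches $v$ from the $B$-side ($B\to B$ or $A\to B$). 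No appeal to Theorem~\ref{bridgeslemma} is needed here; that theorem is reserved for $R_{\text{good}}$ in the next step. Your Case 2 ($d^+_A(v)$ small, $d^+_B(v)$ large) forces a crossing at $v$ that the available hypotheses cannot support.

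Concretely, the density you feed into Theorem~\ref{bridgeslemma} is unjustified on two counts. First, $\overrightarrow{e}(A,B) = \Omega(n^{2-1/\lceil k/2\rceil})$ does not follow from the semidegree condition plus almost-regularity once $R$ is present: $|R|$ is only known to be $o(n)$ (it can be as large as $n^{1-\Theta(1/k)}$, far exceeding $n^{1-1/\lceil k/2\rceil}$), so the excess out-degree of $A$-vertices can be absorbed entirely by edges into $R$ and the counting gives nothing. The paper only obtains such a density later, in inequality~(\ref{densitycondition2}), for the carefully chosen sets $X' \subseteq A \cup (R_A\setminus R_{\text{good}})$ and $Y' \subseteq B \cup (R_B\setminus R_{\text{good}})$, using in addition the $r_{\text{bad}}/8$ slack coming from the fact that $R_{\text{bad}}$-vertices have few in-neighbors in $X$. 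Second, even granting a global bound on $\overrightarrow{e}(A,B)$, it does not transfer to the prescribed pair $\bigl(N^-_A(v), N^+_B(v)\bigr)$: a sub-quadratic edge set can avoid any fixed pair of linear-sized subsets completely, and Lemma~\ref{DRC for sets} cannot manufacture edges that are not there. (In the paper's Lemma~\ref{claim1}, where crossing bridges really are built, this is circumvented because $X' \rightarrow R_{\text{good}} \rightarrow Y'$ is full domination and the end-sets $S_1, S_2$ are chosen at random so that a constant fraction of the crossing edges survives.) Finally, once the construction is repaired along the paper's lines, bridges may pass through vertices of $R_A \cup R_B$ lying on the chains ordered by $\pi$, so the $10k$-apart condition is not automatic as you claim; it must be enforced by forbidding $I_{10k}$-neighbourhoods of previously built bridges, which is why the lemma allows size $3k+1$ and carries the apartness clause.
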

\begin{proof}
We will find the bridges one by one. For this, let us first set $F, V = (R'_A \cup R'_B) \setminus R_{\text{good}}$ and $\mathcal{B'} = \emptyset$ and repeat the following procedure: Take a vertex $v \in V$ and find a bridge $\mathbf{v}$ containing $v$ of size no larger than $3k+1$ which does not go from $B$ to $A$ and such that $\mathbf{v} \setminus \{v\} \subseteq (X \cup Y) \setminus F$; add the vertices in $I_{10k}(\mathbf{v},\pi)$ to $F$; add $\mathbf{v}$ to $\mathcal{B'}$ and delete $v$ from $V$. We go until $V = \emptyset$. Note this indeed constructs the desired collection of bridges - recall that $|(R'_A \cup R'_B) \setminus R_{good}| = O(1)$.

We now show that the process can be done successfully. Consider some stage and let $v$ be the vertex taken. Suppose first that $v \in R'_A \setminus R_{\text{good}}$ - we will show that a bridge $\mathbf{v}$ can be constructed. Note that by the description of the process, we must have $|F| \leq (3k+1)(20k+1)|R'_A \cup R'_B| < \frac{1}{10}d$, since $d = 2^{200k}(|R'_A \cup R'_B|+1)$. Recall that our definitions imply that $d^{+}_{A \cup R_A}(v) + d^{-}_{B \cup R_B}(v) > 2d$ and $d^{-}_{A}(v) \geq |A|/10$. Let us then first assume that $d^{+}_{A \cup R_A}(v) > d$, which will be our first case. 

\textbf{Case 1:} \big($d^{+}_{A \cup R_A}(v) > d$\big)
Note this implies that $d^{+}_{(A \cup R_A) \setminus F}(v) \geq \frac{1}{2}d$ and so, we consider two subcases: either $d^{+}_{A \setminus F}(v) > \frac{1}{4}d$, or $d^{+}_{A \setminus F}(v) \leq \frac{1}{4}d$ and $d^{+}_{R_A \setminus F}(v) > \frac{1}{4}d$.

\textit{Case 1a:} \big($d^{+}_{A \setminus F}(v) > \frac{1}{4}d$\big) Denote $N^{+}_{A \setminus F}(v)$ and $N^{-}_{A \setminus F}(v)$ by $N^{+}$ and $N^{-}$, respectively. Recall that $T[A]$ is $o(n)$-almost regular and since $|F| = O(1)$, so is $T[A \setminus F]$. This implies by Lemma \ref{lem:anycutisdense} (with $\delta=1/4$), that $\overrightarrow{e}(N^{-},N^{+}) \geq \frac{1}{80}|N^{-}||N^{+}|$. We further know that $|N^{-}| \geq d^{-}_{A}(v) - |F| \geq \frac{|A|}{20}$ and $|N^{+}| \geq \frac{d}{4}$. We can then use Lemma \ref{DRC for sets} to get the following claim.
\begin{claim}\label{bridgeDRC}
There exists a subset $U \subseteq N^{+}$ of size at least $2^{30k}$ such that less than a fraction of $1/2^{30k^2}$ of its $k$-subsets have less than $2^{30k}$ common in-neighbors in $N^{-}$.
\end{claim}
\begin{proof}
Apply Lemma \ref{DRC for sets} to the graph on $N_{+} \cup N_{-}$ where the edges are those in $\overrightarrow{E}_{T}(N_{-},N_{+})$. We then take the following parameters: $a = |N_{+}| \in \left[\frac{d}{4},|A| \right]$, $b = |N_{-}| \in \left[\frac{1}{20}|A|,|A| \right]$, $\gamma \geq \frac{1}{80}$, $m = s = 2^{30k}$. Indeed, since $d \geq 2^{200k}$, note that
$${a \gamma^k \choose k} \geq \left( \frac{d}{4k 80^k}\right)^k > 2 \cdot 2^{60k^2} \cdot 20^k \geq 2 \cdot 2^{30k^2} \cdot {|A| \choose k}\left(\frac{2^{30k}}{\frac{1}{20}|A|} \right)^k \geq 2s^k\binom{a}{k}\left(\frac{m}{b}\right)^k$$
and also,
$${a \gamma^k \choose k} \geq 2^{60k^2} \geq 2s^k$$
\end{proof}

By Lemma \ref{inclusion-exlusion}, since $U \subseteq A$, the fraction of $A$-heads of size $k$ in it is at least $1/2^{30k^2}$. Thus, the above claim implies that there is an $A$-head in $U \subseteq N^{+}$, which has at least $2^{30k}$ common in-neighbors in $N^{-} \subseteq A$. In turn, we can apply Lemma \ref{inclusion-exlusion} again to ensure that there is an $A$-tail among these at least $2^{30k}$ vertices. Note that by construction, these two sets (the above found $A$-tail and $A$-head) together with $v$ give a bridge $\mathbf{v}$ as desired - it has size $2k+1$ and goes from $A$ to $A$.

\textit{Case 1b:} \big($d^{+}_{A \setminus F}(v) \leq \frac{1}{4}d$ and $d^{+}_{R_A \setminus F}(v) > \frac{1}{4}d$\big) Denote $N^{-}_{A \setminus F}(v)$ and $N^{+}_{R_A \setminus F} (v)$ by $N^{-}$ and $N^{+}$, respectively. Note that $|N^{-}| = |A \setminus F| - d^{+}_{A \setminus F}(v) \geq |A \setminus F| - \frac{d}{4} \geq |A| - \frac{d}{2}$ \big(recall that $|F| < \frac{1}{10}d$\big) and $|N^{+}| \geq \frac{d}{4}$. By definition, every vertex in $N^{+} \subseteq R_A$ has at least $\frac{|A|}{10}$ in-neighbors in $A$ and thus, at least $\frac{|N^{-}|}{12}$ in-neighbors in $N^{-}$ (by the previous inequality on $|N^{-}|$). Hence, $\overrightarrow{e}(N^{-},N^{+}) \geq \frac{1}{12}|N^{-}||N^{+}|$. A similar application of Lemma \ref{DRC for sets} to the one in the case before gives then that there is a subset $U \subseteq N^{+}$ of size at least $2^{30k}$ such that less than a fraction of $1/2^{30k^2}$ of its $k$-subsets have common in-neighborhood in $N^{-}$ of size less than $2^{30k}$. Further, recall that every vertex has at least $\frac{n}{5}$ out-neighbors in $A \cup B$. So, Lemma \ref{inclusion-exlusion} implies that the fraction of $(A \cup B)$-heads of size $k$ in $U$ is at least $1/2^{30k^2}$ and thus, there is a set $S_2 \subseteq U$ of size $k$ which is a $(A \cup B)$-head and has an $A$-tail $S_1 \subseteq N^{-}$ of size $k$ in its common in-neighborhood in $N^{-}$. Finally, since $S_2$ is an $(A \cup B)$-head, its common out-neighborhood in $(A \cup B) \setminus F$ is of size $\Omega(n)$ and so, there is a set $S_3$ of size $k$ lying in its common out-neighborhood which is an $A$-head contained in $A \setminus F$ or a $B$-head contained in $B \setminus F$. Note that $\mathbf{v} = (S_1, v, S_2, S_3)$ is a bridge as desired - it has size $3k+1$ and goes from $A$ to $A$ or from $A$ to $B$.

\textbf{Case 2:} \big($d^{+}_{A \cup R_A}(v) \leq d$\big) Recall that we have $d^{+}_{A \cup R_A}(v) + d^{-}_{B \cup R_B}(v) > 2d$ since $v \notin R_{\text{good}}$. Thus, we must have $d^{-}_{B \cup R_B}(v) \geq d$ which implies $d^{-}_{(B \cup R_B) \setminus F}(v) \geq \frac{1}{2}d$. Note also, that since $N^{+}(v) \geq \delta^0(T) \geq \frac{n}{4}$, we have $d^{+}_{B \setminus F}(v) \geq N^{+}(v) - d^{+}_{A \cup R_A}(v) - |R| \geq \frac{n}{5}$ (note that $F \subseteq R$). We can then find $\mathbf{v}$ by breaking the situation into two subcases which are very similar to before: either $d^{-}_{B \setminus F}(v) > \frac{1}{4}d$ or $d^{-}_{B \setminus F}(v) \leq \frac{1}{4}d$  and $d^{-}_{R_B \setminus F}(v) > \frac{1}{4}d$. By doing the same procedures as before, the first case will give a bridge of size $2k+1$ which goes from $B$ to $B$, while the second case will give a bridge of size $3k+1$ going from $B$ to $B$ or $A$ to $B$.

Finally, the construction is analogous when $v \in R'_B \setminus R_{\text{good}}$.
\end{proof}
Add the bridges given by the claim to $\mathcal{B}$, so that $|V(\mathcal{B})| = O(1)$ and $\mathcal{B}$ is $10k$-apart in $\pi$.
\subsubsection{Finding bridges which go from $A$ to $B$ and covering $R_{good}$}\label{precorr}
Every $v \in R_{\text{good}}$ is such that $d^{-}_X(v) \geq |X| - 2d$ and $d^{+}_Y(v) \geq |Y| - 2d$. Hence, since $d, |V(\mathcal{B})|, r_{good} = O(1)$, there exist subsets $X' \subseteq X \setminus V(\mathcal{B})$ and $Y' \subseteq Y \setminus V(\mathcal{B})$ of sizes $|X| - O(1)$ and $|Y| - O(1)$, such that $X' \rightarrow R_{\text{good}} \rightarrow Y' $. 

We will now get a density condition between $X'$ and $Y'$ that will be enough to construct many bridges going from $A$ to $B$. These will be used to cover $R_{good}$ and to balance out the number of bridges going from $B$ to $A$ which are created later. Recall that indeed this correction must be made because we will want to have in the end the same number of bridges going from $A$ to $B$ as those going from $B$ to $A$.

Now, let us first assume that $|X'| \leq |Y'|$, implying that $|X'| \leq \frac{1}{2}|X \cup Y| \leq \frac{1}{2} (n-r_{\text{bad}} - r_{\text{good}})$. Since $\delta^0(T) \geq \frac{n}{4} + cn^{1- 1/ \lceil k/2 \rceil}$, we have $$\overrightarrow{e}(X',V(T) \setminus X') \geq \sum_{v \in X'} d^{+}(v) - {|X'| \choose 2} \geq |X'| \left(\frac{n}{4} + cn^{1- 1/ \lceil k/2 \rceil} -  \frac{n-r_{\text{bad}} - r_{\text{good}}}{4}\right)$$ 
\begin{equation}\label{dens}
= |X'| \left(\frac{r_{\text{bad}} + r_{\text{good}}}{4} + cn^{1- 1/ \lceil k/2 \rceil} \right) .
\end{equation}
Moreover, every vertex $v \in R_{\text{bad}}$ is such that $d^{-}_X(v) \leq d^{-}_A(v) + |R| \leq \frac{1}{10}|A| +o(n) = \frac{1}{10}|X'| +o(n)$ and so, $\overrightarrow{e}(X',R_{\text{bad}}) \leq \left(\frac{|X'|}{10} +o(n) \right) r_{\text{bad}}$. Hence, as $r_{\text{good}} + |X \setminus X'| + |Y \setminus Y'|= O(1)$ and $\overrightarrow{e}(X',Y') = \overrightarrow{e}(X',V(T) \setminus X') - \overrightarrow{e}(X',R_{\text{bad}}) - \overrightarrow{e}(X',R_{\text{good}} \cup (X \setminus X') \cup (Y \setminus Y'))$, we can deduce from (\ref{dens}) that
\begin{equation} \label{densitycondition2}
\overrightarrow{e}(X',Y') \geq |X'| \left(\frac{r_{\text{bad}}}{8} + cn^{1- 1/ \lceil k/2 \rceil} - O(1) \right) \geq \frac{n}{3} \left(\frac{r_{\text{bad}}}{8} + cn^{1- 1/ \lceil k/2 \rceil} - O(1) \right) 
\end{equation}
since $|X'|,|Y'| \geq \frac{n}{3}$. Similarly, we can deduce the same inequality when $|Y'| \leq |X'|$. This density condition is used in the following.
\begin{lem}\label{claim1}
There exists a collection of $r_{\text{good}} + |\mathcal{C}_{\text{bad}}| + |R'_{\text{bad}}| + 1$ many vertex-disjoint bridges contained in $X \cup Y \cup R_{good}$ which cover $R_{good}$ and are such that the following hold: each of them goes from $A$ to $B$ and has size at most $4k+1$; the collection together with $\mathcal{B}$ is $10k$-apart in $\pi$. 
\end{lem}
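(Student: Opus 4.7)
We construct the bridges iteratively, maintaining a forbidden set $F$ that is initialized as $V(\mathcal{B})\cup I_{10k}(V(\mathcal{B}),\pi)$ and enlarged after each iteration by the new bridge together with its $10k$-neighborhood in $\pi$. In the first $r_{\text{good}}$ iterations we cover a distinct vertex $v\in R_{\text{good}}$; in the remaining $|\mathcal{C}_{\text{bad}}|+|R'_{\text{bad}}|+1$ iterations we construct \emph{balancing} bridges that do not meet $R_{\text{good}}$. Since we build only $O(\log n)$ bridges, each of size at most $4k+1$, we keep $|F|=O(k^2\log n)$ throughout, which is negligible compared to $|A|$, $|B|$, and the edge count from (\ref{densitycondition2}).

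The engine of each iteration is Theorem \ref{bridgeslemma}, applied to $A^*:=(X'\cap A)\setminus F$ and $B^*:=(Y'\cap B)\setminus F$. The bound (\ref{densitycondition2}) yields $\overrightarrow{e}(X',Y')\geq\Omega(cn^{2-1/\lceil k/2\rceil})$, and since at most $O(\log n)$ vertices are ever removed via $F$, we have $\overrightarrow{e}(A^*,B^*)\geq C_k\,n^{2-1/\lceil k/2\rceil}$ once $c=c(k)$ is large enough in terms of the constant $C_k$ of Theorem \ref{bridgeslemma}. To build a bridge covering $v\in R_{\text{good}}$, invoke Theorem \ref{bridgeslemma} on $(A^*,B^*)$ to obtain a $k$-th power of a path $(m_1,\ldots,m_k,m'_1,\ldots,m'_k)$ with $m_i\in A$ and $m'_j\in B$, and insert $v$ between the two halves. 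The sequence $(m_1,\ldots,m_k,v,m'_1,\ldots,m'_k)$ is still a $k$-th power of a path: since $X'\to v\to Y'$ for every $v\in R_{\text{good}}$, every required edge $m_i\to v$ and $v\to m'_j$ is present, and the cross edges $m_i\to m'_j$ demanded by the new structure (those with $j\leq i-1$) form a subset of the edges $j\leq i$ already provided by the original $k$-th power of a path.

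To upgrade this sequence into a genuine $(A,B,k)$-chain, we extend it on both sides. Prepending $k$ vertices $T_1\subseteq A\setminus F$ and appending $k$ vertices $T_2\subseteq B\setminus F$ yields the bridge $(T_1,m_1,\ldots,m_k,v,m'_1,\ldots,m'_k,T_2)$ of total size $4k+1$. A direct position-distance check reveals that the only cross constraints at the new interfaces are that $(T_1,m_1,\ldots,m_k)$ be a $k$-th power of a path inside $A$ and $(m'_1,\ldots,m'_k,T_2)$ be one inside $B$; no edges are demanded between $T_1$ and $\{v,m'_j,T_2\}$ or between $T_2$ and $\{T_1,m_i,v\}$. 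To produce $T_1$ we exploit that $T[A]$ is $o(n)$-almost regular, so that iterated uses of Lemma \ref{DRC for sets} and Lemma \ref{inclusion-exlusion} let us select $t_k,t_{k-1},\ldots,t_1$ one by one from $A\setminus F$ such that $t_i$ dominates the prefix $\{m_1,\ldots,m_i\}$ together with the already chosen $t_{i+1},\ldots,t_k$, and the final set $\{t_1,\ldots,t_k\}$ is an $A$-tail; the construction of $T_2$ is symmetric inside $B$. For the balancing bridges we run exactly the same procedure with the vertex $v$ omitted, producing bridges of size $4k$. The main technical hurdle is this staircase construction, where we must simultaneously secure the $k$-th power of a path extension and the $A$-tail (respectively $B$-head) property while avoiding $F$; this requires carefully exploiting the almost-regularity of $T[A]$ and $T[B]$ to control the common in-neighborhoods of the $m_i$'s (and out-neighborhoods of the $m'_j$'s), combined with the head/tail extraction guaranteed by Lemma \ref{inclusion-exlusion}.
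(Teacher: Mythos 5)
There are two genuine gaps in your plan, both at the heart of the construction. First, you apply Theorem \ref{bridgeslemma} to $A^*=(X'\cap A)\setminus F$ and $B^*=(Y'\cap B)\setminus F$, justifying the density hypothesis by (\ref{densitycondition2}) plus the removal of the $O(\log n)$ vertices of $F$. But passing from $X'\times Y'$ to $(X'\cap A)\times(Y'\cap B)$ is not a removal of $O(\log n)$ vertices: it deletes all of $R_A\setminus R_{\text{good}}$ and $R_B\setminus R_{\text{good}}$, and $|R|$ is only known to be $o(n)$ (roughly $n^{1-1/(4000k)}$), which is much larger than $n^{1-1/\lceil k/2\rceil}$. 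Hence the edges incident to $R_A\cup R_B$ can account for \emph{all} of the guaranteed $\overrightarrow{e}(X',Y')\geq \Omega(cn^{2-1/\lceil k/2\rceil})$, and $\overrightarrow{e}(A^*,B^*)$ may be far below the threshold $C_kn^{2-1/\lceil k/2\rceil}$ of Theorem \ref{bridgeslemma}; the surplus out-degree of vertices of $A$ forced by $\delta^0(T)\geq \frac n4+cn^{1-1/\lceil k/2\rceil}$ can be realized entirely on edges into $R_B$. This is exactly why in the paper the middle $2k$ vertices of these bridges are allowed to lie in $X'\cup Y'$ (hence possibly in $R_A\cup R_B$), not in $A\cup B$; insisting on $m_i\in A$, $m'_j\in B$ is both unnecessary for the definition of a bridge and unobtainable from the available bounds. (A smaller instance of the same bookkeeping issue: for $k=2$ the threshold is $C_2n$, and $n|F|=\Theta(n\log r_{\text{bad}})$ is not negligible against it; the paper absorbs it using the $\frac{r_{\text{bad}}}{8}$ term in (\ref{densitycondition2}), which you dropped.)

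Second, your ``staircase'' extension by $T_1$ and $T_2$ does not go through as described. After Theorem \ref{bridgeslemma} has produced the specific vertices $m_1,\ldots,m_k$, you need $t_i\in A\setminus F$ dominating $\{m_1,\ldots,m_i\}\cup\{t_{i+1},\ldots,t_k\}$, i.e.\ you need large common in-neighborhoods of up to $2k-1$ \emph{fixed} vertices; $o(n)$-almost regularity of $T[A]$ gives each $m_j$ in-degree about $|A|/2$ but says nothing about their common in-neighborhood, which can be empty, and neither Lemma \ref{DRC for sets} nor Lemma \ref{inclusion-exlusion} can be applied to vertices that are already fixed. The paper avoids this by reversing the order of choices: it first picks a uniformly random $A$-tail $S_1\subseteq A\setminus F$ and $B$-head $S_2\subseteq B\setminus F$, restricts to $X_1=N^+_{X'\setminus F}(S_1)$ and $Y_1=N^-_{Y'\setminus F}(S_2)$, shows $\mathbb{E}[\overrightarrow{e}(X_1,Y_1)]\geq \overrightarrow{e}(X'\setminus F,Y'\setminus F)/2^{100k^2}$ via Lemma \ref{inclusion-exlusion}, and only then applies Theorem \ref{bridgeslemma} inside $(X_1,Y_1)$, so that the domination constraints $S_1\rightarrow \mathbf{w}_1$ and $\mathbf{w}_2\rightarrow S_2$ hold automatically and $(S_1,\mathbf{w}_1,\mathbf{w}_2,S_2)$ is a bridge. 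Your insertion of a vertex $v\in R_{\text{good}}$ into the middle, and the balancing bridges without such a vertex, are fine and match the paper, but as written the two steps above are missing ideas rather than routine verifications.
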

\begin{proof}
We will first find a collection $\mathcal{B}'$ of $r_{\text{good}} + |\mathcal{C}_{\text{bad}}| + |R'_{\text{bad}}| + 1$ many vertex-disjoint bridges in $A^k \times (X')^{k} \times (Y')^{k} \times B^k$ which together with $\mathcal{B}$ is $10k$-apart in $\pi$. Later we will insert each vertex of $R_{\text{good}}$ into a unique bridge in $\mathcal{B}'$ to get the desired covering.

Set first $\mathcal{B}' = \emptyset$ and $F = V(\mathcal{B}) \cup I_{10k}(V(\mathcal{B}),\pi)$ and repeat the following operation: if possible, find a bridge $\mathbf{v} \in (A \setminus F)^k \times (X'\setminus F)^{k} \times (Y' \setminus F)^{k} \times (B \setminus F)^k $ going from $A$ to $B$ and add it to $\mathcal{B}'$; add all vertices in $\mathbf{v} \cup I_{10k}(\mathbf{v}, \pi)$ to $F$.

Consider some stage of the process. Note that $|X' \setminus F|,|Y' \setminus F| \geq \frac{n}{3} - |F|$ and that by (\ref{densitycondition2}),
$$\overrightarrow{e}(X' \setminus F,Y' \setminus F) \geq \frac{n}{3} \left(\frac{r_{\text{bad}}}{8} + cn^{1- 1/ \lceil k/2 \rceil} - O(1) \right) - n |F| \geq n \left(\frac{r_{\text{bad}}}{30} + \frac{c}{3}n^{1- 1/ \lceil k/2 \rceil} - |F| - O(1) \right) .$$
Further, all vertices $x \in X' \setminus F \subseteq A \cup R_A$ and $y \in Y' \setminus F \subseteq B \cup R_B$ are such that $d^{-}_{A \setminus F}(x) \geq \frac{|A|}{10} - |F|$ and $d^{+}_{B \setminus F}(y) \geq \frac{|B|}{10} - |F|$. Note also that $|F| \leq (|V(\mathcal{B})|+|V(\mathcal{B'})|)(20k+1) \leq (O(1) + 4k|\mathcal{B'}|)(20k+1) = O( |\mathcal{B'}| + 1)$. 

Therefore, if $|\mathcal{B'}| \leq r_{\text{good}} + |\mathcal{C}_{\text{bad}}| + |R'_{\text{bad}}| = O(\log r_{\text{bad}}) + O(1)$ (since $|R'_{\text{bad}}|, r_{\text{good}} = O(1)$), we have that $|F| \leq  \frac{r_{\text{bad}}}{30} + O(1)$ and thus,
\begin{equation}\label{density4}
\overrightarrow{e}(X' \setminus F,Y' \setminus F) \geq \frac{c}{3}n^{2- 1/ \lceil k/2 \rceil} - O(n) .
\end{equation}
Now, let us recall again that for all $x \in X' \setminus F$, we have $d^{-}_{A \setminus F}(x) \geq \frac{1}{10}|A| - |F| \geq \frac{1}{10}|A| - o(n) \geq \frac{1}{20}|A \setminus F|$ and similarly, for all $y \in Y' \setminus F$, we have $d^{+}_{B \setminus F}(y) \geq \frac{1}{20}|B \setminus F|$. Also, $|A \setminus F|, |B \setminus F| \geq \frac{n}{3}$. Now, choose independently and uniformly at random an $A$-tail $S_1 \subseteq A \setminus F$ of size $k$ and a $B$-head $S_2 \subseteq B \setminus F$ of size $k$. Define $X_1 = N^{+}_{X' \setminus F}(S_1)$ and $Y_1 = N^{-}_{Y' \setminus F}(S_2)$. We can show that then
\begin{equation}\label{density10}
\mathbb{E}[\overrightarrow{e}(X_1,Y_1)] \geq \overrightarrow{e}(X' \setminus F,Y' \setminus F)/2^{100k^2} .
\end{equation}
Indeed, take a vertex $x \in X' \setminus F$. Since $T[A]$ is $o(n)$-almost regular, every vertex in $N^{-}_{A \setminus F}(x) \subseteq A$ has at least $\frac{|A|}{4}$ in-neighbors in $A$. Therefore, Lemma \ref{inclusion-exlusion} gives that the fraction of $A$-tails in $N^{-}_{A \setminus F}(x)$ is at least $1/2^{30k^2}$ and consequently, the number of such $A$-tails is at least $(d^{-}_{A \setminus F}(x))^k /2^{30k^2} \geq |A \setminus F|^k/2^{50k^2}$. Hence, $\mathbb{P}(x \in X_1) \geq 1/2^{50k^2}$. Similarly, for all $y \in Y' \setminus F$, $\mathbb{P}(y \in Y_1) \geq 1/2^{50k^2}$, which ultimately, gives (\ref{density10}) by summing over all edges.

Take then a choice of $S_1,S_2$ such that $\overrightarrow{e}(X_1,Y_1) \geq \overrightarrow{e}(X' \setminus F,Y' \setminus F)/2^{100k^2}$. By (\ref{density4}), if $c$ is sufficiently large as a function of $k$, we can apply Theorem \ref{bridgeslemma} to the tournament $T[X_1 \cup Y_1]$. Let $ \mathbf{w}_1 \times \mathbf{w}_2 \in X_1^k \times Y_1^k$ be the resulting $k$-th power. Then, $\mathbf{v} = (S_1,\mathbf{w}_1,\mathbf{w}_2,S_2) \in (A \setminus F)^k \times X_1^{k} \times Y_1^{k} \times (B \setminus F)^k $ is the desired bridge which we add to $\mathcal{B'}$. The described process can then be successfully repeated until $|\mathcal{B'}| = r_{\text{good}} + |\mathcal{C}_{\text{bad}}| + |R'_{\text{bad}}| + 1$. 

To conclude, for each vertex in $R_{\text{good}}$ we arbitrarily choose a unique bridge in $\mathcal{B'}$, to which we add the vertex - making it a bridge in $A^k \times (X')^{k} \times R_{\text{good}} \times (Y')^{k} \times B^k$. This works because $X' \rightarrow R_{\text{good}} \rightarrow Y' $. The new collection $\mathcal{B'}$ of bridges will now cover $R_{\text{good}}$ and have the desired properties.
\end{proof}
Add the bridges given by the claim to $\mathcal{B}$. Note that $|\mathcal{B}| = O(\log r_{\text{bad}}) + O(1) = O(\log n)$ and that $\mathcal{B}$ is $10k$-apart in $\pi$.
\subsubsection{Completing the covering}\label{complete}
In this section, we will complete the covering of $R$ and consequent construction of $\mathcal{B}$, by extending what is left of the chains in $\mathcal{C}_A$, $\mathcal{C}_B$ and $\mathcal{C}_{\text{bad}}$ and correcting whatever imbalance we have as a result. Indeed, recall that we want the number of bridges going from $A$ to $B$ to be positive and equal to the number of bridges going from $B$ to $A$. Before doing so, we define what we mean by extending. If $Q$ is the $k$-th power of a path, then an \emph{extension of $Q$} is a bridge $\mathbf{v} = \mathbf{v}_1 \times Q \times \mathbf{v}_2$, where $\mathbf{v}_1,\mathbf{v}_2$ have both $k$ vertices.

First, note that by construction, the collection $\mathcal{B}$ is $10k$-apart in $\pi$. Note also that every bridge in $\mathcal{B}$ at the moment has size at most $4k+1$. Therefore, Lemma \ref{lem:destroying powers} implies that each $C \in \mathcal{C}_A \cup \mathcal{C}_B$ is such that $C \setminus V(\mathcal{B})$ is the $k$-th power of a path. Further, for all $10k$-chains $C \in \mathcal{C}_A$, the first $k$ vertices of $C \setminus V(\mathcal{B})$ are among the first $10k$ vertices of $C$ and thus have $\Omega(n)$ common in-neighbors in $A$. Similarly, the last $k$ vertices of $C \setminus V(\mathcal{B})$ have $\Omega(n)$ common out-neighbors in $A \cup B$. Finally, if $C \in \mathcal{C}_B$, the first $k$ vertices of $C \setminus V(\mathcal{B})$ have $\Omega(n)$ common in-neighbors in $A \cup B$ and its last $k$ have $\Omega(n)$ common out-neighbors in $B$.

Since $|\mathcal{B}| = O(\log n)$ and $|\mathcal{C}_A \cup \mathcal{C}_B \cup \mathcal{C}_{\text{bad}}| = O(\log n)$, it is easy to see that we can then get a collection $\{\mathbf{v}_C : C \in \mathcal{C}_A \cup \mathcal{C}_B \cup \mathcal{C}_{\text{bad}}\}$ of vertex-disjoint bridges such that for each $C$, $\mathbf{v}_C$ is an extension of $C \setminus V(\mathcal{B})$ and $\mathbf{v}_C$ is disjoint to $V(\mathcal{B})$. This is done by using Lemma \ref{inclusion-exlusion} in a similar way as in many previous arguments. Furthermore, if $C \in \mathcal{C}_{\text{bad}}$, then $\mathbf{v}_C$ is a bridge going from $B$ to $A$ and if $C \in \mathcal{C}_A \cup \mathcal{C}_B$, it is a bridge going from $A$ to $A$, $A$ to $B$ or $B$ to $B$. Add the bridges $\mathbf{v}_C$ to $\mathcal{B}$. Note that we added $O(\log n)$ bridges and so, we still have $|\mathcal{B}| = O(\log n)$.

To finish the covering, we are only left to deal with $R'_{\text{bad}}$. Recall that every vertex $v$ in this set is such that $d^{+}_A(v) \geq \frac{9}{10}|A|$ and $d^{-}_B(v) \geq \frac{9}{10}|B|$. Furthermore, recall also that $\overrightarrow{e}(B,A) = |A||B| - \overrightarrow{e}(A,B) \geq |A||B| - o(n^2)$ and that $|V(\mathcal{B}) \cap (A \cup B)| = O(\log n)$. Therefore, for a specific vertex $v \in R'_{\text{bad}}$, we can use Lemma \ref{DRC for sets} with density $\gamma = 1-o(1)$ to construct a bridge of size $2k+1$ which goes from $B$ to $A$ and contains $v$ as its middle vertex. Since $|R'_{\text{bad}}| = O(1)$, we can iterate this to find a collection of $|R'_{\text{bad}}|$ vertex-disjoint bridges of size $2k+1$ contained in $(A \cup B \cup R'_{\text{bad}}) \setminus V(\mathcal{B})$ which go from $B$ to $A$ and cover $R'_{\text{bad}}$. Add these bridges to $\mathcal{B}$ - we still have $|\mathcal{B}| = O(\log n)$.

To conclude, the covering of $R$ is done but there might still be some imbalance in $\mathcal{B}$. Indeed, note that the only construction of bridges going from $B$ to $A$ was done to cover $R_{\text{bad}}$ in the two previous paragraphs. We then have precisely $|R'_{\text{bad}}| + |\mathcal{C}_{\text{bad}}|$ bridges in $\mathcal{B}$ going from $B$ to $A$. On the other hand, by Lemma \ref{claim1}, we have at least $r_{\text{good}} + |\mathcal{C}_{\text{bad}}| + |R'_{\text{bad}}| + 1$ bridges in $\mathcal{B}$ which go from $A$ to $B$. Therefore, there is a number $t \leq |\mathcal{B}| = O(\log n)$, such that if $t$ bridges which go from $B$ to $A$ are added to $\mathcal{B}$, there is no imbalance. Since $\overrightarrow{e}(B,A) \geq |A||B| - o(n^2)$ and $|V(\mathcal{B}) \cap (A \cup B)| = O(\log n)$, we can indeed (again by using Lemma \ref{DRC for sets}) construct $t$ vertex-disjoint bridges of size $2k$ contained in $(A \cup B) \setminus V(\mathcal{B})$ which go from $B$ to $A$. We are then finished by adding these to $\mathcal{B}$.
\subsection{Finishing the proof}\label{finish}
\begin{figure}[t]
\begin{tikzpicture}[scale = 1, xscale=1, yscale=0.8]

\draw (-5,0) ellipse (3 and 4); 
\node[scale=1.2] (A) at(-5,5) {$A$};
\draw (5,0) ellipse (3 and 4); 
\node[scale=1.2] (B) at(5,5) {$B$};
\draw[rounded corners] (-1,-4) rectangle (1,4);
\node[scale=1.2] (R) at(0,5) {$R$};

\fill[ left color=gray,right color=white ] (-4,1.8) rectangle (4,2.2);
\draw (-4,1.8) rectangle (4,2.2);
\fill[left color=gray,right color=white] (-4,2.4) rectangle (4,2.8);
\draw (-4,2.4) rectangle (4,2.8);

\fill[left color=gray,right color=white] (-4,1.2) rectangle (4,1.6);
\draw (-4,1.2) rectangle (4,1.6);

\fill[right color=gray,left color=white] (-4,0.4) rectangle (4,0.8);
\draw (-4,0.4) rectangle (4,0.8);
\fill[right color=gray,left color=white] (-4,-0.2) rectangle (4,0.2);
\draw (-4,-0.2) rectangle (4,0.2);

\fill[right color=gray,left color=white] (-4,-0.8) rectangle (4,-0.4);
\draw (-4,-0.8) rectangle (4,-0.4);

\fill[left color=gray,right color=white] (-4,-1.6) -- (-0.4,-1.6) -- (-0.4,-2.8) -- (-4,-2.8) -- (-4,-2.4) -- (-0.8,-2.4) -- (-0.8,-2) -- (-4,-2);
\draw (-4,-1.6) -- (-0.4,-1.6);
\draw (-4,-2) -- (-0.8,-2);
\draw (-4,-2.4) -- (-0.8,-2.4);
\draw (-4,-2.8) -- (-0.4,-2.8);
\draw (-0.8,-2) -- (-0.8,-2.4);
\draw (-0.4,-1.6) -- (-0.4,-2.8);
\draw (-4,-1.6) -- (-4,-2);
\draw (-4,-2.4) -- (-4,-2.8);

\fill[right color=gray,left color=white] (4,-1.6) -- (0.4,-1.6) -- (0.4,-2.8) -- (4,-2.8) -- (4,-2.4) -- (0.8,-2.4) -- (0.8,-2) -- (4,-2);
\draw (4,-1.6) -- (0.4,-1.6);
\draw (4,-2) -- (0.8,-2);
\draw (4,-2.4) -- (0.8,-2.4);
\draw (4,-2.8) -- (0.4,-2.8);
\draw (0.8,-2) -- (0.8,-2.4);
\draw (0.4,-1.6) -- (0.4,-2.8);
\draw (4,-1.6) -- (4,-2);
\draw (4,-2.4) -- (4,-2.8);

\draw[arrow=2] (-3.6,2) -- (3.6,2);
\draw[arrow=2] (-3.6,2.6) -- (3.6,2.6);
\draw[arrow=2] (-3.6,1.4) -- (3.6,1.4);
\draw[arrow=2] (3.6,0.6) -- (-3.6,0.6);
\draw[arrow=2] (3.6,0) -- (-3.6,0);
\draw[arrow=2] (3.6,-0.6) -- (-3.6,-0.6);

\draw (-3.6,-1.8) -- (-0.6,-1.8);
\draw (-3.6,-2.6) -- (-0.6,-2.6);
\draw[arrow=2] (-0.6, -1.8) -- (-0.6,-2.6);
\draw (3.6,-1.8) -- (0.6,-1.8);
\draw (3.6,-2.6) -- (0.6,-2.6);
\draw[arrow=2] (0.6, -2.6) -- (0.6,-1.8);

\draw[thick,blue] (4,2.6) -- (4.4,2.6);
\draw[thick,blue] (4,0.6) -- (4.4,0.6);
\draw[thick,blue,arrow=3] (4.4, 2.6) -- (4.4,0.6);

\draw[thick,blue] (4,2) -- (4.8,2);
\draw[thick,blue] (4,0) -- (4.8,0);
\draw[thick,blue,arrow=3] (4.8, 2) -- (4.8,0);

\draw[thick,red] (-4,2) -- (-4.4,2);
\draw[thick,red] (-4,0.6) -- (-4.4,0.6);
\draw[thick,red,arrow=3] (-4.4, 0.6) -- (-4.4,2);

\draw[thick,red] (-4,1.4) -- (-4.8,1.4);
\draw[thick,red] (-4,0) -- (-4.8,0);
\draw[thick,red,arrow=3] (-4.8, 0) -- (-4.8,1.4);

\draw[thick,red] (-4,-0.6) -- (-4.4,-0.6);
\draw[thick,red] (-4,-1.8) -- (-4.4,-1.8);
\draw[thick,red,arrow=3] (-4.4, -0.6) -- (-4.4,-1.8);

\fill[red!30] (-4.8,-3) -- (-6.6,-3) -- (-6.6,3) -- (-4.8,3) -- (-4.8,2.2) -- (-5.6,2.2) -- (-5.6,-2.2) -- (-4.8,-2.2) -- (-4.8,-3);
\draw[arrow=3] (-6.1,-2.6) -- (-6.1,2.6);
\draw (-6.1,-2.6) -- (-5.2,-2.6);
\draw (-6.1,2.6) -- (-5.2,2.6);
\node[scale=1] (CA) at(-7.2,0) {$C_A$};

\fill[blue!30] (4.8,-3) -- (6.6,-3) -- (6.6,3) -- (4.8,3) -- (4.8,2.2) -- (5.6,2.2) -- (5.6,-2.2) -- (4.8,-2.2) -- (4.8,-3);
\draw[arrow=3] (6.1,2.6) -- (6.1,-2.6);
\draw (6.1,-2.6) -- (5.2,-2.6);
\draw (6.1,2.6) -- (5.2,2.6);
\node[scale=1] (CA) at(7.2,0) {$C_B$};

\draw[thick,red,arrow=3] (-4.8,2.6) -- (-4,2.6);
\draw[thick,red,arrow=3] (-4,-2.6) -- (-4.8,-2.6);

\draw[thick,blue,arrow=3] (4.8,-2.6) -- (4,-2.6);
\draw[thick,blue] (4,-1.8) -- (4.4,-1.8);
\draw[thick,blue] (4,-0.6) -- (4.4,-0.6);
\draw[thick,blue,arrow=3] (4.4, -1.8) -- (4.4,-0.6);

\draw[thick,blue,arrrow=3] (4,1.4) -- (4.8,2.6);

\end{tikzpicture}

\caption{The 'links' which produce the $k$-th power of a Hamilton cycle - when $l = 3$ and $l_A = l_B = 1$.}
\label{links}
\end{figure}
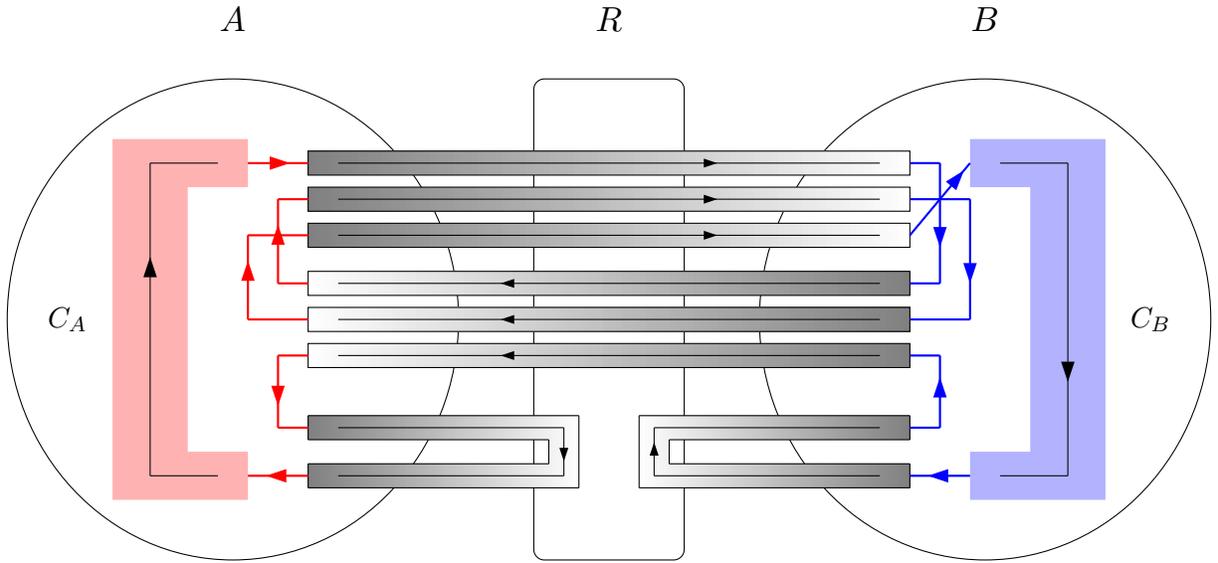

At this point, the situation is as follows. We have a collection $\mathcal{B}$ of vertex-disjoint bridges covering $R$, such that the number of them going from $A$ to $B$ is positive and equal to the number of them going from $B$ to $A$. Moreover, each bridge intersects $A \cup B$ in at most $4k$ vertices and $\mathcal{B}$ has size $O(\log n)$. Hence, both $T[A \setminus V(\mathcal{B})]$ and $T[B \setminus V(\mathcal{B})]$ are $o(n)$-almost regular. In particular, they are $\frac{1}{4}$-cut-dense and respectively have minimum semidegrees at least $|A|/4$ and $|B|/4$. Therefore, we can use Remark \ref{rem:hamiltonian path} to find spanning $3k$-chains $C_A \subseteq T[A \setminus V(\mathcal{B})]$ and $C_B \subseteq T[B \setminus V(\mathcal{B})]$.

We will now use Lemma \ref{lem:linking} inside the $o(n)$-almost regular tournaments $T[A \setminus V(\mathcal{B})]$ and $T[B \setminus V(\mathcal{B})]$ much like in Section \ref{cutdense2}, in order to link the chains $C_A,C_B$ with the bridges in $\mathcal{B}$ so that as a result, we get the $k$-th power of a Hamilton cycle in $T$. First, let us indicate which 'links' need to be made. For purposes of notation, let $\mathbf{v}_1, \ldots, \mathbf{v}_l \in \mathcal{B}$ denote the bridges going from $A$ to $B$, let $\mathbf{v}_{l+1}, \ldots, \mathbf{v}_{2l} \in \mathcal{B}$ denote the bridges going from $B$ to $A$, let $\mathbf{w}_{1}, \ldots, \mathbf{w}_{l_A} \in \mathcal{B}$ denote the bridges going from $A$ to $A$ and let $\mathbf{u}_{1}, \ldots, \mathbf{u}_{l_B} \in \mathcal{B}$ denote the bridges going from $B$ to $B$.

Inside $A$, we will make the following links: $C_A$ to $\mathbf{v}_1$ (meaning we link the last $k$ vertices of $C_A$ to the first $k$ vertices of $\mathbf{v}_1$), $\mathbf{v}_{l+i-1}$ to $\mathbf{v}_i$ (for $2 \leq i \leq l$), $\mathbf{v}_{2l}$ to $\mathbf{w}_1$, $\mathbf{w}_{i-1}$ to $\mathbf{w}_{i}$ (for $1 < i \leq l_A$) and $\mathbf{w}_{l_A}$ to $C_A$. Inside $B$: $\mathbf{v}_i$ to $\mathbf{v}_{l+i}$ (for $1 \leq i \leq l-1$), $\mathbf{v}_l$ to $C_B$, $C_B$ to $\mathbf{u}_{1}$, $\mathbf{u}_{i}$ to $\mathbf{u}_{i+1}$ (for $1 \leq i \leq l_B-1$), $\mathbf{u}_{l_B}$ to $\mathbf{v}_{2l}$. Note this will produce the $k$-th power of a Hamilton cycle (see Figure \ref{links} for an illustration).

As mentioned, the construction of these linking structures is done as in Section \ref{cutdense2}. Also, we will only do it for the links inside $A$, since for $B$ the method is the same. Now, first let $F$ be the set of vertices which are either in $V(\mathcal{B})$ or among the first or last $k$ vertices of $C_A$. Note that $|F| = O(\log n)$. Let also $V = A \setminus F$ and $\pi$ be the ordering of $V$ given by the chain $C_A$. Proceed with the first linking, i.e., $C_A$ to $\mathbf{v}_1$. Let $H_A$ denote the set of last $k$ vertices of $C_A$ and $H'_1$ denote the set of first $k$ vertices of $\mathbf{v}_1$. By the definition of a $3k$-chain as well as the definition of a bridge, we have that $H_A$ has at least $|A|/2^{20k}$ common out-neighbors in $V$ and $H'_1$ has at least $|A|/2^{20k}$ common in-neighbors in $V$. Therefore, we can apply Lemma \ref{lem:linking} to the tournament $T[V \cup H_A \cup H'_1]$, since it is $o(n)$-almost regular. This will produce a $k$-th power $P \subseteq V \cup H_A \cup H'_1$ which starts in $H_A$ and ends in $H'_1$ and further, is such that $P \setminus (H_A \cup H'_1)$ can be partitioned into at most $\log n$ many $k$-sets $S_1, \ldots, S_m$ which are $10k$-apart in $\pi$. Update $F:= F \cup I_{10k}(S_1 \cup \ldots \cup S_m,\pi)$ and $V := V \setminus F$. This implies that $V$ decreases by at most $O(\log n)$. We repeat this process until we've made all the links needed inside $A$. Notice that at each step, we delete $O(\log n)$ many vertices from $V$ and there are $O(\log n)$ many steps. Thus, at each step, the tournament to which we will want to apply Lemma \ref{lem:linking} is always $o(n)$-almost regular and so, we can indeed use the linking lemma on it. Moreover, the collection of $k$-sets $S_i$ used for the linking paths is $10k$-apart in $\pi$ and each of these sets is disjoint to $V(\mathcal{B})$ and to the sets of first and last $k$ vertices of $C_A$. By Lemma \ref{lem:destroying powers}, this means that when we remove the vertices used in the linking structures (i.e., the vertices in the sets $S_i$) from $C_A$, we are still left with the $k$-th power of a path. 

After doing the same procedure inside $B$, we have produced the desired links which, as a result, can be used to construct the $k$-th power of a Hamilton cycle in $T$.
\end{proof} 

\section{Lower bound constructions}\label{sec:lower bound cubes}
In this section, we construct tournaments with large minimum semidegree which do not contain the $k$-th power of a Hamilton cycle. Specifically, we will prove Theorems \ref{generallowerbound} and \ref{cubesthm}.
\subsection{A Construction for all $k$}
In order to prove Theorem \ref{generallowerbound}, we first will need to reduce an extremal $K_{r,r}$-free graph to a $K_{r,r}$-free graph with large minimum degree. This is done by the following standard claim.
\begin{claim}\label{proposition}
Let $r = \lceil \frac{k-1}{2} \rceil$. Then, for all $n$ there exists a balanced bipartite graph on $n-1$ vertices which is $K_{r,r}$-free, has minimum degree $\Theta \left(\frac{ ex(n,K_{r,r})}{n} \right)$ and maximum degree at most $\frac{n-1}{8}$.
\end{claim}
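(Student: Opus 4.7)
The plan is to start from an extremal $K_{r,r}$-free graph, clean up its degree distribution via the K\H{o}v\'{a}ri--S\'{o}s--Tur\'{a}n inequality, take a balanced bipartition, and finally trim to exactly $n-1$ vertices by random sub-sampling. The essential tool is $\sum_v \binom{d(v)}{r} \leq (r-1)\binom{N}{r}$, valid in any $K_{r,r}$-free graph on $N$ vertices; we may assume $r \geq 2$ (the case $r=1$ is trivial). Write $m := \text{ex}(n, K_{r,r})$.

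Start from an extremal $K_{r,r}$-free graph $G_0$ on $N := Kn$ vertices for a large constant $K = K(r)$, so $|E(G_0)| = \text{ex}(N, K_{r,r}) = \Theta(m)$. Applying the above inequality with threshold $T := (n-1)/16$ shows that at most $(r-1)\binom{N}{r}/\binom{T}{r} = O(1)$ vertices have degree $\geq T$; delete them to get a $K_{r,r}$-free $G_1$ with maximum degree below $(n-1)/16$, losing only $O(n) = o(m)$ edges. Next, iteratively remove any vertex of current degree less than $\tau := c_0 m/n$ for a constant $c_0 = c_0(r) > 0$. Each removal loses fewer than $\tau$ edges, so if all $\sim N$ vertices were removed the total loss would be below $N\tau = Kc_0 m$, which stays less than $|E(G_1)|$ once $Kc_0$ is sufficiently small. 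The process therefore terminates with a non-empty subgraph $G_2$ of minimum degree at least $\tau = \Theta(m/n)$. Combining the upper bound $|E(G_2)| \leq C|V(G_2)|^{2-1/r}$ with the handshake lower bound $|E(G_2)| \geq \tau|V(G_2)|/2$ gives $|V(G_2)| \geq (\tau/(2C))^{r/(r-1)} = \Omega(n)$; with $K$ large and $c_0$ tuned appropriately (both constraints on $c_0$ compatible), this bound can be strengthened to $|V(G_2)| \geq n-1$.

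Now take a uniformly random balanced partition $V(G_2) = A \sqcup B$. For each vertex $v$, its degree in $G_2[A,B]$ is hypergeometrically distributed with mean $\approx d_{G_2}(v)/2 \geq \tau/2$, so by Chernoff--Hoeffding concentration (valid since $\tau = \Omega(n^{1-1/r}) = \omega(\log n)$ for $r \geq 2$) and a union bound over the $\leq N$ vertices, there exists a realization making every bipartite degree at least $\tau/4 = \Theta(m/n)$; the maximum degree stays below $(n-1)/16$. Finally, to reduce to exactly $n-1$ vertices with balanced parts of sizes $\lfloor (n-1)/2 \rfloor$ and $\lceil (n-1)/2 \rceil$, sub-sample $A' \subseteq A, B' \subseteq B$ uniformly at random of the required sizes; a second application of Chernoff--Hoeffding preserves the minimum degree up to a constant factor, and the maximum degree can only decrease.

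The main technical obstacle is the simultaneous control of balance, the exact vertex count $n-1$, and the min/max degree bounds: the iterative min-degree boost respects neither balance nor exact size. The plan handles this by running the degree-conditioning on a sufficiently enlarged initial graph (factor $K$) to guarantee $\geq n-1$ surviving vertices, and by enforcing balance and exact size only afterwards via two random procedures whose Chernoff--Hoeffding bounds are tight enough thanks to $\tau$ being polynomial in $n$ and hence comfortably $\omega(\log n)$.
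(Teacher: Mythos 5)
The bipartition and sub-sampling steps at the end are fine, and so is the removal of high-degree vertices via $\sum_v \binom{d(v)}{r}\le (r-1)\binom{N}{r}$; the gap is in the step that is supposed to guarantee that the min-degree cleaning retains at least $n-1$ vertices, and it is fatal for the approach. Your lower bound $|V(G_2)|\ge (\tau/(2C))^{r/(r-1)}$ with $\tau=c_0 m/n$, $m=\mathrm{ex}(n,K_{r,r})$, gives $\Omega(n)$ only if $\tau=\Omega(n^{1-1/r})$, i.e.\ only if $\mathrm{ex}(n,K_{r,r})=\Omega(n^{2-1/r})$ --- which is exactly the open Tur\'an conjecture for $r\ge 4$, whereas Claim 5.1 must hold unconditionally (the conjecture enters the paper only when interpreting the error term, not in the construction). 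Unconditionally one only knows $m=\Omega(n^{2-2/(r+1)})$, and then your bound yields $|V(G_2)|=\Omega(n^{r/(r+1)})=o(n)$. Moreover, even assuming the conjecture, your two requirements on $c_0$ are incompatible rather than ``compatible'': your termination argument demands $Kc_0$ small, hence $\tau\le c_0\,m/n\le c_0\,Cn^{1-1/r}$ with $c_0\ll 1$, while $(\tau/(2C))^{r/(r-1)}\ge n-1$ would force $\tau\ge 2C(n-1)^{1-1/r}$, i.e.\ $c_0\ge 2$ (since $m\le Cn^{2-1/r}$ by K\H{o}v\'ari--S\'os--Tur\'an). So with your own choice of constants the surviving vertex set is at best a small constant fraction of $n$, never $n-1$, and there is no way to pad back up to exactly $n-1$ vertices without destroying the minimum-degree requirement. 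The sub-sampling at the end cannot repair this, since it needs $|V(G_2)|\ge n-1$ to begin with.

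The missing idea that makes this ``standard claim'' go through without any vertex-deletion for the minimum degree is the classical fact that an extremal graph is already almost regular from below: if $G$ is $K_{r,r}$-free on $n$ vertices with $\mathrm{ex}(n,K_{r,r})$ edges, then every vertex $v$ satisfies $d(v)\ge \mathrm{ex}(n,K_{r,r})-\mathrm{ex}(n-1,K_{r,r})\ge \frac{2}{n}\mathrm{ex}(n,K_{r,r})$, because deleting a vertex of smaller degree would leave a $K_{r,r}$-free graph on $n-1$ vertices with more than $\mathrm{ex}(n-1,K_{r,r})$ edges, and the second inequality is the monotonicity of the Tur\'an density $\mathrm{ex}(n,H)/\binom{n}{2}$. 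Starting from such a $G$, the vertex count never needs to be inflated: discard one vertex to reach $n-1$ vertices, handle the $O(1)$ vertices of degree above $(n-1)/16$ (your counting) by trimming their incident edges down to about $m/n$ instead of deleting the vertices (every other degree drops by only $O(1)$), and then take a uniformly random balanced bipartition keeping the crossing edges; your Chernoff--Hoeffding argument (using $m/n=\omega(\log n)$ for $r\ge 2$) gives minimum degree $\Omega(m/n)$ and the maximum degree stays below $(n-1)/8$. This route avoids the lossy iterative deletion entirely and is unconditional.
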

\noindent Now we can use this to give the tournament construction.
\begin{proof}[ of Theorem \ref{generallowerbound}]
Take $n = 3$ (mod $4$) to be sufficiently large. Let $G$ be the balanced bipartite graph on $n-1$ vertices with parts $A$, $B$ of sizes $\frac{n-1}{2}$ given by Claim \ref{proposition}, which has minimum degree $d = \Theta \left(\frac{ ex(n,K_{r,r})}{n} \right)$ and maximum degree at most $\frac{n-1}{8}$. Construct a tournament $T'$ on the same vertex set as $G$ by first orienting the edges inside $A$ and $B$ so that $T'[A]$, $T'[B]$ are regular tournaments, thus having minimum semidegree $\frac{n-3}{4}$. Secondly, orient all the edges in $G$ from $A$ to $B$ and the rest is oriented from $B$ to $A$. To finish, add a single vertex $v$ to $T'$ which has $A$ as its in-neighborhood and $B$ as its out-neighborhood. Let $T$ be the resulting $n$-vertex tournament. Note indeed that $\delta^0(T) \geq \min \left(\frac{n-3}{4}+1+d,\frac{3(n-1)}{8} \right) \geq
\frac{n+1}{4}+d$.

For sake of contradiction, suppose $T$ has the $k$-th power of a Hamilton cycle and consider the moment it passes through $v$. Let $S_A \subseteq A$ be the $r$ vertices coming before it and $S_B \subseteq B$ be the $r$ vertices coming after it. Then, every vertex in $S_A$ dominates every vertex in $S_B$ and thus, these sets form a $K_{r,r}$ in $G$, which is a contradiction.
\end{proof}
We can use a similar approach to complete our answer to one of the questions we raised in the introduction. Recall that Theorem \ref{thm:cut result}
implies that we can have $n_0 = \eps^{-O(k)}$ in Theorem \ref{bollobas}. To show that this exponential behaviour in $k$ is best possible, we can take the same construction as in the proof of Theorem \ref{generallowerbound} but instead of using Claim \ref{proposition}, we can take $G$ to be a balanced bipartite graph on $n-1=\eps^{-\Omega(k)}$ vertices which has no $K_{r,r}$, has minimum degree at least $\eps n$ and maximum degree at most $\frac{n-1}{2} - \eps n$.
Such $G$ can be easily constructed by considering a random bipartite graph with edge probability roughly $2\eps$.

\subsection{Cubes}\label{cubessection}
In this section, we prove Theorem \ref{cubesthm}, which gives an improved lower bound for containing the cube of a Hamilton cycle, which we denote by 
$H_n^3$. Let $d=d(n)$ be the largest possible odd number such that there exists a $d$-regular bipartite graph on $2n$ vertices which does not contain a copy of $C_4,C_6$ or $C_8$. Then, we can show the following.
\begin{prop}\label{prop:cubes}
Let $t$ be a positive odd integer. There exists a tournament on $n=2t+1$ vertices with $\delta(T)\geq \frac{t-1}{2}+d(t)$ which does not contain the cube of a Hamilton cycle.
\end{prop}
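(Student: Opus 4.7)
The plan is to mimic the construction of Theorem \ref{generallowerbound} but with $G$ replaced by a $d(t)$-regular bipartite graph of girth at least $10$, i.e.\ avoiding $C_4$, $C_6$, and $C_8$. Concretely, let $A$ and $B$ be disjoint sets of size $t$ and let $G$ be such a bipartite graph on $A\cup B$. Orient the edges inside $A$ and inside $B$ so that $T[A]$ and $T[B]$ are regular tournaments on $t$ vertices; this is possible because $t$ is odd. Orient each edge between $A$ and $B$ from $A$ to $B$ exactly when it is an edge of $G$, and finally attach a vertex $v$ with in-neighborhood $A$ and out-neighborhood $B$. A direct degree count gives
$\delta^{0}(T)\ \geq\ \tfrac{t-1}{2}+1+d(t)\ \geq\ \tfrac{t-1}{2}+d(t),$
since every $A$-vertex has out-degree $\tfrac{t-1}{2}+1+d(t)$, every $B$-vertex has in-degree $\tfrac{t-1}{2}+1+d(t)$, and $v$ has both semidegrees equal to $t$.

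For the non-existence part, suppose for contradiction that $T$ contains the cube of a Hamilton cycle $u_0u_1\ldots u_{n-1}$, and write $v=u_i$. The conditions $A\to v\to B$ force $u_{i-3},u_{i-2},u_{i-1}\in A$ and $u_{i+1},u_{i+2},u_{i+3}\in B$. The three cube-edges crossing $v$ that go from $A$ to $B$, namely $u_{i-2}u_{i+1}$, $u_{i-1}u_{i+1}$, and $u_{i-1}u_{i+2}$, must then all lie in $G$, so $G$ contains the path $u_{i-2}-u_{i+1}-u_{i-1}-u_{i+2}$ of length $3$.

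The key local observation is that if at any four consecutive positions $j,j+1,j+2,j+3$ of the Hamilton cycle the labels (membership in $A$ versus $B$) are $A,A,B,B$, then the four cube-edges $u_ju_{j+2}$, $u_ju_{j+3}$, $u_{j+1}u_{j+2}$, $u_{j+1}u_{j+3}$ are all $A$-to-$B$ and hence lie in $G$, producing a $C_4$ in $G$ and contradicting the girth. Consequently, the cyclic label sequence contains no substring $AABB$, so at every direct $A$-to-$B$ transition away from $v$ either the $A$-segment ending there or the $B$-segment starting there has length exactly one.

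The final step, and the main technical hurdle, is to combine this constraint with the forced $P_4$ at $v$ to produce a cycle of length at most $8$ in $G$ in every possible configuration. The idea is to trace the Hamilton cycle forward from the $AAAvBBB$ block: each subsequent direct $A$-to-$B$ transition contributes a star of $G$-edges into the following $B$-segment (whose size depends on whether $a_i=1$ or $b_i=1$), and each time the cycle returns to an $A$-block one records a corresponding collection of $G$-non-edges. Using the cyclic closure (the cycle must come back to $A_1$) together with the counts $\sum a_i=\sum b_i=t$ and the girth-$10$ property of $G$---which says that any two vertices admit at most one $G$-path of length $\le 4$ between them---one argues that some additional cube-forced $G$-path of length $1$, $3$, or $5$ must appear between a pair of vertices of the $v$-path, yielding a $C_4$, $C_6$, or $C_8$ in $G$. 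The delicate point is ruling out balanced alternating configurations in which the stars produced at successive transitions are pairwise disjoint; the odd length $n=2t+1$ and the requirement that both segments adjacent to $v$ have length $\geq 3$ are exactly what prevents a fully alternating layout from closing up, forcing the contradiction.
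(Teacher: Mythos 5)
Your construction is missing the key idea of the paper's proof, and the gap sits exactly where you flag the ``main technical hurdle''. In the paper's construction the tournament inside $A$ is \emph{not} arbitrary: for each $b\in B$ the set $N_G(b)$ is made into a regular tournament (type I edges), and every directed path $a_1\rightarrow a_2\rightarrow a_3$ of type I edges whose endpoints have no common $G$-neighbour gets the closing edge oriented backwards, $a_3\rightarrow a_1$ (type II edges); girth $\ge 10$ is used to show these rules are consistent and the resulting digraphs are regular, so the orientation can be completed to a regular tournament on $A$. The point of all this is the endgame: after the label-sequence analysis (your ``no $AABB$'' observation plus the count $|A|=|B|$), one is forced to find a pattern $AABAB$, say $a_1,a_2,b_1,a_3,b_2$. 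The cube then forces the transitive triple $a_1\rightarrow a_2\rightarrow a_3$, $a_1\rightarrow a_3$, with $b_1$ a common $G$-neighbour of $a_1,a_2$ and $b_2$ of $a_2,a_3$; but these $G$-edges only form a path of length $4$, which is perfectly compatible with girth $10$. The contradiction comes from the orientation rule inside $A$: $a_1a_2$ and $a_2a_3$ are type I edges, $a_1,a_3$ have no common $G$-neighbour (else a $C_4$ or $C_6$), so the type II rule forces $a_3\rightarrow a_1$, contradicting $a_1\rightarrow a_3$. With an arbitrary regular tournament on $A$, as in your proposal, no such contradiction is available: the forced $G$-structure at any $AABAB$ (and at the block around $v$) is just a short path, never a cycle, so your claim that a ``tracing'' argument produces a $C_4$, $C_6$ or $C_8$ ``in every possible configuration'' is unsubstantiated, and I see no way to make it work without constraining the orientation inside $A$.

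Two smaller points. Your degree count only verifies out-degrees of $A$-vertices, in-degrees of $B$-vertices and the two semidegrees of $v$; you also need the in-degrees on the $A$-side and out-degrees on the $B$-side, which are $\frac{t-1}{2}+t-d(t)$ and are fine only because $d(t)=O(t^{1/5})\le t/2$ --- say so. Flipping the role of $v$ (taking $A\rightarrow v\rightarrow B$ instead of the paper's $B\rightarrow v\rightarrow A$) is harmless by symmetry, but then the forced pattern analysis should be run backwards from $v$, mirroring the paper's forward analysis.
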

\begin{proof} 
 We define the vertex set of $T$ to be $A\cup B\cup\{v\}$ where $|A|=|B|=t$. Let $G$ be a $d(t)$-regular bipartite graph with parts $A$ and $B$ which does not contain $C_4,C_6$ or $C_8$. Orient all edges in $G$ from $A$ to $B$, and the remaining edges orient from $B$ to $A$. Let all vertices in $B$ dominate $v$, and let $v$ dominate all vertices in $A$. 
Now we orient the edges inside of $B$ so that they form an arbitrary regular tournament with degrees $\frac{t-1}{2}$. To finish the construction, we orient the edges in $A$. For each vertex $b\in B$, we orient the edges inside of $N_G(b)$ so that $N_G(b)$ induces a regular tournament. Notice that for distinct $b_1$ and $b_2$ the sets $N_G(b_1)$ and $N_G(b_2)$ intersect in at most one vertex, since otherwise there would be a copy of $C_4$ inside of $G$. Therefore this gives a well-defined orientation of these edges, meaning that we never orient the same edge twice. We call the edges in $A$ for which we just gave an orientation, type I edges.
Let $(a_1,a_2,a_3)$ be a triple of distinct vertices in $A$, such that $a_1a_2$ and $a_2a_3$ are oriented type I edges, but such that the edge $\{a_1,a_3\}$ is still not oriented. For each such triple, we orient this edge as $a_3a_1 \in E(T)$; we call oriented edges of this kind type II edges. Observe again that we have a well-defined orientation here, since for each type II edge $a_3a_1$ there exists a unique $a_2$ such that $a_1a_2$ and $a_2a_3$ are type I edges. Indeed, suppose the contrary. Then, there is another $a_2'$ such that $a_1a_2'$ and $a_2'a_3$ are type I edges. There are several cases here, and each results in a short cycle in $G$, which is a contradiction. Indeed, let $b_1,b_2,b'_1$ and $b_2'$ be the common neighbors in $G$ of the pairs $\{a_1,a_2\}$, $\{a_2,a_3\}$, $\{a_1,a_2'\}$, and $\{a_2',a_3\}$, respectively.
\begin{itemize}
    \item If $|\{b_1,b_2\}\cap\{b'_1,b_2'\}|=0$ then we get a $C_8$ in $G$.
     \item If $|\{b_1,b_2\}\cap\{b'_1,b_2'\}|=1$ then we get a $C_6$ in $G$.
      \item If $|\{b_1,b_2\}\cap\{b'_1,b_2'\}|=2$ then we get a $C_4$ in $G$.
\end{itemize}

Now we show that the directed graph induced by type II edges is regular. Let us first count the number of (type II) in-neighbors of any vertex $a_1\in A$; each in-neighbor is obtained by a directed $2$-path of type $I$ edges $a_1 \rightarrow a_2 \rightarrow a_3$, such that $\{a_1,a_3\}$ is not oriented as a type I edge, i.e. $\{a_1,a_3\}$ do not have a common neighbor in $B$. The vertex $a_1$ has $d(d-1)/2$ out-neighbors in the type I digraph, since $|N_G(a_1)|=d$, and for each vertex $b\in N_G(a_1)$, we have that $a_1$ has $(d-1)/2$ out-neighbors in the regular graph induced by $N_G(b)$. Furthermore, for each type I out-neighbor $a_2$ of $a_1$, there exist $(d-1)^2/2$ type I out-neighbors $a_3$ such that $\{a_1,a_3\}$ is not oriented, as $a_2$ is in $d-1$ distinct regular (type I) tournaments besides the one with $a_1$, and in each of those it has out-degree $(d-1)/2$. If for any such $a_3$ the edge $\{a_1,a_3\}$ was already oriented as a type I edge, it would mean that there is a common neighbor of $a_1$ and $a_3$, but then we would have a $C_6$ in $G$ together with the common neighbors of $a_1,a_2$ and $a_2,a_3$. We conclude that the type II in-degree of $a_1$ is precisely $d(d-1)^3/4$ and similarly, we get that the out-degree equals the same number.

Note that both the digraph formed by type I edges and the one formed by type II edges are regular, hence the complement of the union of these digraphs (i.e., the graph of non-oriented edges) inside $A$ is also regular and has even degree, as $|A|=t$ is odd.
It is a standard exercise to show that we can orient the remaining edges in $A$ so that the resulting digraph is regular, so the whole tournament on $A$ is regular. Finally, we get that the resulting tournament $T$ satisfies $\delta^0(T)\geq \frac{t-1}{2}+d(t)$.

Now suppose, for sake of contradiction, that $T$ contains a $H_n^3$. Consider the appearance of $v$ in this cycle, and notice that immediately after it, at least three vertices in $A$ necessarily follow. We consider the sequence $\pi$ of letters $A$ and $B$ corresponding to the sequence of vertices which follow after $v$ in $H_n^3$ and their affiliation to either $A$ or $B$. 

As noted, our sequence starts with at least three consecutive $A$'s. Look at the last two of these consecutive $A$'s  : we either have that two $B$'s follow, which would give a $C_4$ in $G$ (a contradiction), or we have a $B$ followed by an $A$. So we may assume that there is a subsequence $AABA$. Now, either we can find a subsequence $AABAB$ in our graph (and we deal with this case in the next passage), or the sequence we found continues with an $A$, giving $AABAA$. Furthermore, each time we encounter next $B$ in our sequence, we would have to continue with two consecutive $A$'s, which would mean we have more $A$'s than $B$'s in the sequence, which is a contradiction since $|A|=|B|$.

Therefore, there has to be a subsequence $AABAB$ in $H_n^3$ whose vertices we denote by $a_1, a_2, b_1, a_3, b_2$. This implies that $a_1, a_2$ and $a_3$ form a transitive tournament in $A$ with $a_1\rightarrow a_2\rightarrow a_3$, and $b_1$ is the common neighbor of $a_1,a_2$ in $G$, and $b_2$ is the common neighbor of $a_2,a_3$ in $G$. Notice that $a_1a_2$ and $a_2a_3$ are type I edges, and if we prove that $\{a_1,a_3\}$ do not have a common neighbor, then $a_3a_1$ is a type II edge, contradicting the assumption that $a_1, a_2$ and $a_3$ form a transitive tournament. If $b_2$ is the common neighbor of $a_1, a_3$ then we have a $C_4$ in $G$ formed by $a_1-b_1-a_2-b_2-a_1$. Also, $b_1$ can not be the common neighbor, since 
$b_1 \rightarrow a_3$ and therefore this edge is not $G$. Otherwise, if some other vertex $b_3$ is the common neighbor, we again get a contradiction, since we have a $C_6$ in $G$, given by $a_1-b_1-a_2-b_2-a_3-b_3-a_1$. This finishes the proof.
\end{proof}

\noindent
To complete the proof we need the well known result of Benson \cite{benson1966minimal} on Tur\'an numbers of even cycles.

\begin{prop}
For every $q=2^{\ell}$ there exists a balanced bipartite graph $H$ on $2\sum_{i=0}^5 q^i$ vertices which is $(q+1)$-regular and has girth $12$.
\end{prop}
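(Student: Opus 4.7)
The plan is to realize $H$ as the point-line incidence graph of a generalized hexagon of order $(q,q)$. Recall that a generalized hexagon of order $(s,t)$ is an incidence geometry in which every line contains $s+1$ points, every point lies on $t+1$ lines, and whose bipartite point-line incidence graph has diameter $6$ and girth $12$. Taking $s=t=q$, the incidence graph is automatically bipartite and $(q+1)$-regular, and its girth is $12$. A standard count of flags through a fixed point, using the defining axioms, shows that the number of points equals the number of lines and equals $(q+1)(q^4+q^2+1) = \sum_{i=0}^5 q^i$, so the incidence graph has exactly $2\sum_{i=0}^5 q^i$ vertices, which matches the statement.

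The real task is therefore to exhibit a generalized hexagon of order $(q,q)$ for every prime power $q$, in particular for $q=2^\ell$. The classical construction I would use is Tits' \emph{split Cayley hexagon} $H(q)$, realized inside the parabolic quadric $Q(6,q) \subset PG(6,q)$. Concretely, one takes as points the points of $Q(6,q)$, and as lines those projective lines on $Q(6,q)$ whose Pl\"ucker coordinates satisfy a specific system of six linear equations coming from the Chevalley group $G_2(q)$. The group $G_2(q)$ acts flag-transitively on the resulting incidence structure, which immediately yields that each point lies on exactly $q+1$ lines and each line contains exactly $q+1$ points, giving the required biregularity.

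The main obstacle in executing the plan is verifying the hexagon axiom, i.e., that the incidence graph contains no closed walk of length less than $12$, or equivalently that any two elements are joined by a unique chain of length at most $6$. The natural approach is to use flag-transitivity of $G_2(q)$ to reduce the girth verification to checking a single configuration in $Q(6,q)$, after which it becomes an elementary computation with the defining linear equations (working in characteristic two presents no extra difficulty, since Tits' construction is uniform in $q$). Since the existence and geometric properties of $H(q)$ are classical, in the final write-up one would simply invoke this construction (as Benson does) rather than reproduce the verification, and read off the four required properties -- bipartiteness, $(q+1)$-regularity, girth $12$, and the vertex count -- from the hexagon axioms and the point/line counts above.
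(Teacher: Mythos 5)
Your proposal is correct and coincides with the paper's treatment: the paper does not prove this proposition itself but simply invokes Benson's theorem, whose underlying construction is exactly the one you describe, namely the point-line incidence graph of the generalized hexagon of order $(q,q)$ (Tits' split Cayley hexagon associated with $G_2(q)$), with the same count $(q+1)(q^4+q^2+1)=\sum_{i=0}^5 q^i$ on each side. Deferring the verification of the hexagon axioms to the classical literature, as you suggest, is precisely what the paper does by citing Benson.
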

 This result shows that for $q=2^{\ell}$ and $t=\sum_{i=0}^5q^i$, it holds that $d(t)\geq q+1=\Omega(t^{1/5})$. By considering $n=2\sum_{i=0}^5q^i+1$ and using Proposition \ref{prop:cubes}, we obtain Theorem \ref{cubesthm}.

\section{Concluding remarks}\label{sec:concluding remarks}    
In this paper, we resolved, for all even $k$, the question of determining the minimum semidegree condition which ensures that a tournament contains the $k$-th power of a Hamilton cycle. For odd $k$, although we made a very significant improvement on what was previously known, there is still a small gap between our bounds and it would be very interesting to close it. Here, the first open case is $k=3$. In this case, our Theorem \ref{cubesthm} gives a lower bound of order $n^{1/5}$
on the additive error term. It might be possible to improve this result using similar ideas as before, by forbidding in the constructions subgraphs whose Tur\'an number is larger than that of $C_8$. On the other hand, the upper bound in this case,
coming from Theorem \ref{maintheorem}, has order $\sqrt{n}$ and it is not even clear what the truth should be.

\bibliographystyle{plain}

\end{document}